\begin{document}

\newtheorem{theorem}{Theorem}[section]
\newtheorem{result}[theorem]{Result}
\newtheorem{fact}[theorem]{Fact}
\newtheorem{conjecture}[theorem]{Conjecture}
\newtheorem{lemma}[theorem]{Lemma}
\newtheorem{proposition}[theorem]{Proposition}
\newtheorem{corollary}[theorem]{Corollary}
\newtheorem{facts}[theorem]{Facts}
\newtheorem{props}[theorem]{Properties}
\newtheorem*{thmA}{Theorem A}
\newtheorem{ex}[theorem]{Example}
\theoremstyle{definition}
\newtheorem{definition}[theorem]{Definition}
\newtheorem*{remark}{Remark}
\newtheorem{example}[theorem]{Example}
\newtheorem*{defna}{Definition}

\newcommand{\notes} {\noindent \textbf{Notes.  }}
\newcommand{\defn} {\noindent \textbf{Definition.  }}
\newcommand{\defns} {\noindent \textbf{Definitions.  }}
\newcommand{\x}{{\bf x}}
\newcommand{\e}{\epsilon}
\renewcommand{\d}{\delta}
\newcommand{\z}{{\bf z}}
\newcommand{\B}{{\bf b}}
\newcommand{\V}{{\bf v}}
\newcommand{\T}{\mathbb{T}}
\newcommand{\Z}{\mathbb{Z}}
\newcommand{\Hp}{\mathbb{H}}
\newcommand{\D}{\Delta}
\newcommand{\R}{\mathbb{R}}
\newcommand{\N}{\mathbb{N}}
\renewcommand{\B}{\mathbb{B}}
\renewcommand{\S}{\mathbb{S}}
\newcommand{\C}{\mathbb{C}}
\newcommand{\rt}{\widetilde{\rho}}
 \newcommand{\adj}{{\mathrm{adj}\;}}
 \newcommand{\0}{{\bf O}}
 \newcommand{\av}{\arrowvert}
 \newcommand{\zbar}{\overline{z}}
 \newcommand{\xbar}{\overline{X}}
 \newcommand{\htt}{\widetilde{h}}
\newcommand{\ty}{\mathcal{T}}
\newcommand\diam{\operatorname{diam}}
\renewcommand\Re{\operatorname{Re}}
\renewcommand\Im{\operatorname{Im}}
\newcommand{\tr}{\operatorname{Tr}}
\renewcommand{\skew}{\operatorname{skew}}
\newcommand{\vol}{\operatorname{vol}}
\newcommand{\dt}{\widetilde{\mathcal{D}}}
\newcommand{\ft}{\widetilde{f}}
\newcommand{\gD}{\mathcal{D}}
\newcommand{\ct}{\widetilde{C}}

\newcommand{\ds}{\displaystyle}
\numberwithin{equation}{section}
%
%%%
%
% Dave added these
%
%%%
%
\newcommand{\M}{{\mathcal{M}}}
\newcommand{\tef}{transcendental entire function}
\newcommand{\qfor}{\quad\text{for }}
\newcommand*{\defeq}{\mathrel{\vcenter{\baselineskip0.5ex \lineskiplimit0pt
 \hbox{\scriptsize.}\hbox{\scriptsize.}}}
 =}
%
%%%
%

\renewcommand{\theenumi}{(\roman{enumi})}
\renewcommand{\labelenumi}{\theenumi}

\newcommand{\alastair}[1]{{\scriptsize \color{blue}\textbf{Alastair's note:} #1 \color{black}\normalsize}}

\title{A unified approach to Quasiregular Linearization in the plane}

\author{Alastair N. Fletcher}
\address{Department of Mathematical Sciences, Northern Illinois University, DeKalb, IL}
\email{afletcher@niu.edu}

\author{Jacob Pratscher}
\address{Department of Mathematics and Statistics, Stephen F Austin University, 1936 North Street, Nacogdoches, TX}
\email{jacob.pratscher@sfasu.edu}

\date{\today}

\begin{abstract}
We generalize the classical K\"onig's and B\"ottcher's Theorems in complex dynamics to certain quasiregular mappings in the plane. Our approach to these results is unified in the sense that it does not depend on the local injectivity, or not, of the map at the fixed point. By passing to the logarithmic transform we obtain a quasiconformal mapping in either case. Certain restrictions on the quasiregular mappings are needed in order for there to be a candidate to linearize to. These are provided by requiring a simple infinitesimal space of the mapping at the fixed point and restricting to the BIP mappings introduced by the authors in prior work \cite{FP}.
\end{abstract}

\maketitle

\section{Introduction}
\label{sec:intro}

\subsection{Background}

Linearization is an important concept in dynamics. By conjugating a map near a fixed point to a simpler canonical map, information about the original map may be revealed. The motivation for the current paper comes from the classical K\"onig's and B\"ottcher's Theorems in complex dynamics, see for example \cite{CG,Mil}. These fundamental results date from the nineteenth and early twentieth centuries and essentially say that, in most cases, near a fixed point the behaviour of a holomorphic function can be read off from the first non-constant term in the Taylor series.

More precisely, suppose that $f$ is holomorphic in a neighbourhood of $z_0=0$ and that $f(0)= 0 $. K\"onig's Theorem states that if $|\lambda |$ is not $0$ nor $1$, then 
\[ f(z) = \lambda z + \sum_{n=2}^{\infty} a_n z^n \]
may be holomorphically conjugated to the map $z \mapsto \lambda z$ in a neighbourhood of $0$. On the other hand, B\"ottcher's Theorem deals with the case when $\lambda  = 0$ and states that if 
\[ f(z) = \sum_{n=d}^{\infty} a_n z^n,\]
for some $d\geq 2$, then $f$ may be conjugated to $z\mapsto z^d$ in a neighbourhood of $0$. We will not touch upon the rich possibilities that can occur when $|\lambda | = 1$.

Our focus in this paper will be on the question of linearizing quasiconformal and quasiregular maps in a neighbourhood of a fixed point, say $z_0=0$, in the plane. Quasiregular maps are a natural generalization of holomorphic functions and are now a standard tool in complex dynamics, see the book of Branner and Fagella \cite{BF}. The iteration of quasiregular maps themselves have recently received more attention, see for example the survey of Bergweiler \cite{B} for an entry point to this theory.

An immediate obstacle to progress in the linearization problem for quasiregular maps arises in the possible non-differentiability at a fixed point. There is no general conformal model to conjugate to in this setting. The resolution to this in our approach is to ensure there is a natural candidate to conjugate our map to by hypothesizing the issue away.

To obtain this candidate, we use generalized derivatives as introduced by Gutlyanskii et al in \cite{GMRV}. A generalized derivative of a quasiregular map $f:U\to \R^n$ at $x_0 \in U$ is defined to be any local uniform limit of 
\[ \frac{ f(x_0 + r_k x) - f(x_0) }{ \rho_f(r_k) } \]
as $r_k \to 0$, where $\rho_f(r_k)$ is the mean radius of the image of $B(x_0,r_k)$ under $f$.
Of course, not every such sequence need have a limit, but the quasiregular version of Montel's Theorem implies there will be a subsequence along which there is local uniform convergence. See \cite[p.103]{GMRV} for a discussion of this point. The collection of generalized derivatives of $f$ at $x_0$ is called the infinitesimal space $T(x_0,f)$. It was shown in \cite[Corollary 2.11]{FW} that $T(x_0,f)$ contains either one element or uncountably many.

In the former case where $T(x_0,f)$ consists of only one map $g$, then $f$ is called simple at $x_0$. For example, if $f$ is differentiable at $x_0$, then $f$ is simple at $x_0$. If $f$ is simple at $x_0$, then $f$ has an asymptotic representation analogous to a first degree Taylor polynomial approximation of an analytic function. As is customary, for simplicity we assume throughout the paper that the fixed point is at $x_0=0$. Then \cite[Proposition 4.7]{GMRV} states that as $x\to 0$, we have
\[ f(x) \sim \mathcal{D} (x) := \rho_f(|x|) g(x/ |x| ),\]
where $p(x) \sim q(x)$ as $x\to 0$ means
\[ |p(x) - q(x) | = o( |p(x)| + |q(x) | ).\]
The map $\mathcal{D}$ is called the asymptotic representative of $f$ at (in this case) $0$. Informally speaking, the $g(x/|x|)$ term takes care of the shape of $f$ near $0$ and the $\rho_f(|x|)$ term takes care of the scaling.

The mean radius function $\rho_f$ may be badly behaved enough that the asymptotic representative is not quasiconformal, see \cite[Proposition 1.4]{FP}. However, a new sub-class of quasiregular maps called bounded integrable parameterization maps (or BIP maps for brevity) were introduced by the current authors in \cite{FP}. Under the assumption that the map is locally injective at a fixed point, the BIP condition guarantees the quasiconformality of the asymptotic representative, at least in a neighbourhood of the fixed point, see \cite[Theorem 1.5]{FP}. The definition of BIP maps is somewhat technical, and we defer recollection of them until the next section.

In this paper, we extend the BIP condition for maps which are not locally injective at a fixed point. However, it turns out that the logarithmic transform of the asymptotic representative is quasiconformal in either case.
One of the main goals of this paper is to give a unified approach to linearization results where the local injectivity, or not, of the map at the fixed point is not the point, but the attractive or repulsive behaviour is. For holomorphic maps, non-injectivity at a fixed point means the fixed point is superattracting, but in the quasiregular setting this need not be the case. 

\subsection{Statement of results}

To build towards the statement of our main result, we recall that a quasiregular map in a plane domain is locally quasiconformal away from its branch set, which is necessarily discrete in dimension two. Every quasiregular map $f:U \to \C$ therefore has an associated complex dilatation $\mu_f \in L^{\infty}(U)$ which satisfies $||\mu_f||_{\infty} \leq k <1$ for some $k\in [0,1)$. 

\begin{definition}
\label{def:fixpt}
Let $f$ be quasiregular in a neighbourhood of $z_0 \in \C$ with $f(z_0) = z_0$. 
\begin{enumerate}[(i)]
\item We say that $z_0$ is a geometrically attracting fixed point if there exist $\lambda \in (0,1)$ and a neighbourhood $U$ of $z_0$ such that
\begin{equation}
\label{eq:attrfixpt}
|f(z) - z_0| \leq \lambda |z-z_0|
\end{equation}
for $z\in U$. 
\item If for every $\lambda \in (0,1)$ there exists a neighbourhood $U_{\lambda}$ such that \eqref{eq:attrfixpt} holds for $z\in U_{\lambda}$, then we say that $z_0$ is a superattracting fixed point.
\item We say that $z_0$ is a geometrically repelling fixed point if there exist $\lambda >1$ and a neighbourhood $U$ of $z_0$ such that
\begin{equation}
\label{eq:repfixpt}
|f(z) - z_0| \geq \lambda |z-z_0|
\end{equation}
for $z\in U$. 
\item If for every $\lambda >1$ there exists a neighbourhood $U_{\lambda}$ such that \eqref{eq:repfixpt} holds for $z\in U_{\lambda}$, then we say that $z_0$ is a superrepelling fixed point.
\end{enumerate}
\end{definition}

As far as the authors are aware, the notion of a superrepelling fixed point has not appeared in the literature before.

\begin{example}
Examples of all four cases in Definition \ref{def:fixpt} are provided by considering maps of the form $f(z) = C z^n |z|^m$ for $n \in \N$ and $n+m > 0$. The map $f(z) = z|z|$ has a superattracting fixed point at $0$, whereas the map $f(z) = z^2|z|^{-3/2}$ has a superrepelling fixed point at $0$. Note also that this latter map is not injective at $0$.
\end{example}

Suppose that $f$ is a quasiregular map which fixes $0$ and that the local index $i(0,f) = d>1$. As $f$ is an open and discrete map, there exists $r>0$ so that $f |_{ B(0,r) \setminus \{ 0 \} }$ is a covering map. The image is also topologically a punctured disk. As $\exp : \{ z : \Re(z) < \log r \} \to B(0,r) \setminus  \{ 0 \}$ is a universal covering map, it follows that there exists a lift $\ft : \{ z : \Re(z) < \log r \} \to \C$ of $f$ which satisfies $\exp \circ \ft = f \circ \exp$. While we could take $\ft$ to be $2\pi i $ periodic, for our purposes we wish $\ft$ to be injective on its domain. We therefore require
\[ \ft(z+2\pi i ) = \ft(z) + 2\pi d i \]
for $\Re(z) < \log r $. Locally, we may write
\[ \ft(z) = \log f(e^z)\]
for some branch of the logarithm.

\begin{example}
If $f(z) = Cz^n |z|^m$, then its logarithmic transform is
\[ \ft(z) = (n+m)x + niy + \log C,\]
that is, $\ft$ is an affine map. For this map, $i(0,f) = n$ and we see that
\[ \ft(z+2\pi i) = (n+m)x + (ny + 2\pi n) i + \log C = \ft(z) +2\pi n i.\]
Two important examples to bear in mind are as follows. If $f(z) = z/2$ then $\ft(z) = z - \log 2$ and if $f(z) = z^2$ then $\ft(z) = 2z$.
\end{example}

On the other hand, if an injective map $\widetilde{g}$ defined for $\Re(z) <\log r$ satisfies $\widetilde{g}(z + 2\pi i) = \widetilde{g}(z) + 2\pi d i$, then there is a map $g$ defined in $B(0,r)$ whose logarithmic transform is $\widetilde{g}$.
We will use the logarithmic transform to state the conditions we require from our maps. As can be seen from the results in \cite{FP}, it can be more natural to consider properties in this setting. To reiterate, the point is that in dimension two, the branch set is discrete and we can guarantee that the logarithmic transform is an injective map.

\begin{theorem}[Linearization for attracting fixed points]
\label{thm:1}
Let $f:U \to \C$ be a BIP quasiregular map which fixes $0$ and suppose that $f$ is simple at $0$ with asymptotic representative $\mathcal{D}$ whose logarithmic transform $\dt$ is $L$-bi-Lipschitz. Suppose that there exist positive constants $R,\alpha, \beta, \beta', T_1,T_2,T_3$ so that in $\{z : \Re(z) < \log R \}$ we have:
\begin{enumerate}[(a)]
\item a uniform estimate on how close $\ft$ and $\dt$ are, that is,
\begin{equation}
\label{eq:thm1eq1} 
| \ft(z) - \dt(z)| < T_1e^{\alpha \Re z}, 
\end{equation}
\item the complex derivatives of $\widetilde{\mathcal{D}}$ are locally uniformly H\"older continuous, that is, 
\begin{equation}
\label{eq:thm1eq2} 
| (\dt)_z(u) - (\dt)_z(v)| \leq T_2 |u-v|^{\beta}, \quad  | (\dt)_{\zbar}(u) - (\dt)_{\zbar}(v)| \leq T_2 |u-v|^{\beta},
\end{equation}
for $|u-v| < r$,
\item a uniform estimate on how close the complex derivatives of $\ft$ and $\dt$ are, that is, 
\begin{equation} 
\label{eq:thm1eq3}
| (\ft)_z(u) - (\dt)_z(u) | \leq T_3e^{\beta'\Re(u) }, \quad | (\ft)_{\zbar}(u) - (\dt)_{\zbar}(u) | \leq T_3e^{\beta'\Re(u) }.
\end{equation}
\end{enumerate}
Set $\nu = \min \{\alpha\beta, \beta ' \}$. First, if $f$ has a geometrically attracting fixed point at $0$ with parameter 
\begin{equation}
\label{eq:thm1eq4}
\lambda < \min \{ L^{-1/\alpha} , K(\dt)^{-1/\nu} \},
\end{equation} 
then there exists an asymptotically conformal quasiconformal map $\psi : B(0,R) \to \C$ such that
\[ \psi \circ f = \mathcal{D} \circ \psi \]
in $B(0,R)$. Second, if $f$ has a superattracting fixed point at $0$, then the conclusion holds without any further restrictions such as \eqref{eq:thm1eq4}.
\end{theorem}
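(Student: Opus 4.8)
The plan is to carry out everything after passing to the logarithmic transform. Write $H_\rho=\{z:\Re z<\log\rho\}$. Translating \eqref{eq:attrfixpt} we may assume $\Re\ft(z)\le\Re z+\log\lambda$ on $H_R$, so $\ft(H_\rho)\subseteq H_{\lambda\rho}$ for $\rho\le R$ and the iterates $\ft^{n}$ are defined on $H_R$ and drive $\Re z\to-\infty$; the same estimate holds for $\dt$ because $f\sim\mathcal{D}$ forces $\rho_f(s)\le\lambda s$ for $s\le R$, and $\dt$ is moreover a quasiconformal homeomorphism of $\C$ (being $L$-bi-Lipschitz), so $\dt^{-1}$ is also $L$-bi-Lipschitz. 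Both $\ft$ and $\dt$ satisfy the same twist relation with $d=i(0,f)=i(0,\mathcal{D})$. It then suffices to construct an injective quasiconformal $\widetilde\psi:H_R\to\C$ with $\widetilde\psi\circ\ft=\dt\circ\widetilde\psi$, $\widetilde\psi(z+2\pi i)=\widetilde\psi(z)+2\pi i$, and $\Re\widetilde\psi(z)\to-\infty$ as $\Re z\to-\infty$: then $\psi(e^{z}):=e^{\widetilde\psi(z)}$ is a well-defined quasiconformal embedding of $B(0,R)\setminus\{0\}$, extends over $0$ by removability of bounded isolated singularities, and the functional equation exponentiates to $\psi\circ f=\mathcal{D}\circ\psi$, with $\mu_{\widetilde\psi}(z)\to0$ as $\Re z\to-\infty$ translating into asymptotic conformality of $\psi$ at $0$.

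For the construction I would use the K\"onigs-type sequence $\widetilde\psi_{n}:=\dt^{-n}\circ\ft^{n}:H_R\to\C$, so $\widetilde\psi_0=\mathrm{id}$; this is legitimate precisely because the logarithmic transform turns the model $z\mapsto z^{d}$ into the affine-like map $\dt$, removing the need for $d^{n}$-th roots even when $d>1$. Inserting $\dt^{-n}\circ\ft^{n}=\dt^{-(n+1)}\circ\dt\circ\ft^{n}$ gives
\[ \widetilde\psi_{n+1}(z)-\widetilde\psi_{n}(z)=\dt^{-(n+1)}\big(\ft(\ft^{n}(z))\big)-\dt^{-(n+1)}\big(\dt(\ft^{n}(z))\big), \]
and using that $\dt^{-(n+1)}$ is $L^{n+1}$-Lipschitz, the bound \eqref{eq:thm1eq1} at the point $\ft^{n}(z)$, and $\Re\ft^{n}(z)\le\Re z+n\log\lambda$, we get
\[ |\widetilde\psi_{n+1}(z)-\widetilde\psi_{n}(z)|\le T_1 L\, e^{\alpha\Re z}\,(L\lambda^{\alpha})^{n}\le T_1 L R^{\alpha}(L\lambda^{\alpha})^{n}. \]
The right-hand side is summable uniformly on $H_R$ exactly because $\lambda<L^{-1/\alpha}$, which is the first half of \eqref{eq:thm1eq4}; hence $\widetilde\psi_{n}\to\widetilde\psi$ uniformly on $H_R$ with $\widetilde\psi$ continuous. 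Passing to the limit in $\widetilde\psi_{n}\circ\ft=\dt\circ\widetilde\psi_{n+1}$ yields the conjugacy, the relation $\widetilde\psi_{n}(z+2\pi i)=\widetilde\psi_{n}(z)+2\pi i$ (a consequence of $\dt^{n}(w+2\pi i)=\dt^{n}(w)+2\pi d^{n}i$) passes to the limit, and the conjugacy forces $\Re\widetilde\psi(z)\to-\infty$ as $\Re z\to-\infty$.

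The essential point is that $\widetilde\psi$ is quasiconformal with a dilatation bound independent of $n$; the naive bound $K(\dt)^{n}K(\ft)^{n}$ for $\widetilde\psi_{n}$ is useless, and recovering uniform ellipticity in the limit is where hypotheses (b) and (c) are used. I would compare $\widetilde\psi_{n+1}=\dt^{-(n+1)}\circ\ft^{n+1}$ and $\widetilde\psi_{n}=\dt^{-(n+1)}\circ(\dt\circ\ft^{n})$ as compositions with the \emph{same} outer map $\dt^{-(n+1)}$: their inner maps agree up to $O((L\lambda^{\alpha})^{n})$ in value by \eqref{eq:thm1eq1}, and, after differentiating via the chain rule and controlling the growth of the complex derivatives of $\ft^{n}$, up to a comparably small error in their first complex derivatives by \eqref{eq:thm1eq3}. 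Measuring Beltrami coefficients in the hyperbolic metric of the unit disk $\mathbb{D}$ — for which the composition formula
\[ \mu_{g\circ h}=\frac{\mu_{h}+(\overline{h_z}/h_z)(\mu_{g}\circ h)}{1+(\overline{h_z}/h_z)\,\overline{\mu_{h}}\,(\mu_{g}\circ h)} \]
acts as an isometry in the variable $\mu_{g}\circ h$, fixing $0\mapsto\mu_{h}$ — and using the H\"older continuity \eqref{eq:thm1eq2} of the complex derivatives of $\dt$ (hence of $\dt^{-1}$, using the lower bound on $|\dt_z|$ from bi-Lipschitzness), one estimates the displacement $d_{\mathbb{D}}\!\left(\mu_{\widetilde\psi_{n+1}}(z),\mu_{\widetilde\psi_{n}}(z)\right)$ by a geometric series whose ratio, after accounting for the dilatation of the iterates of $\dt$, is a power of $\lambda$ less than $1$ precisely when $\lambda<K(\dt)^{-1/\nu}$ with $\nu=\min\{\alpha\beta,\beta'\}$ — the second half of \eqref{eq:thm1eq4}. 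Summability makes $\mu_{\widetilde\psi_{n}}$ hyperbolic-Cauchy, so it converges a.e. to some $\mu_{\infty}$ with $\|\mu_{\infty}\|_{\infty}<1$; since the displacements at $z$ carry a factor $e^{\nu\Re z}$, in fact $\mu_{\infty}(z)=O(e^{\nu\Re z})\to0$ as $\Re z\to-\infty$. As $\widetilde\psi$ is the locally uniform limit of the quasiconformal maps $\widetilde\psi_{n}$ and is non-constant (the twist normalization prevents collapse), it is itself quasiconformal with Beltrami coefficient $\mu_{\infty}$; injectivity, if not read off from the twist structure and the resulting injectivity of $\psi$ on the punctured disk, follows from the standard fact that a non-constant planar limit of $K$-quasiconformal maps is $K$-quasiconformal. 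I expect this dilatation estimate — in particular, tracking how the H\"older and ellipticity constants of $\dt^{-(n+1)}$ grow with $n$ so that the decisive ratio really comes out below $1$ under the stated, rather than a stronger, hypothesis on $\lambda$ — to be the main technical obstacle; the rest is comparatively routine.

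For the superattracting case, since $L^{-1/\alpha}$ and $K(\dt)^{-1/\nu}$ are positive, fix any $\lambda\in(0,1)$ small enough that \eqref{eq:thm1eq4} holds; superattraction supplies a neighbourhood $U_{\lambda}=B(0,\rho)$ on which \eqref{eq:attrfixpt} holds, and the construction above, run on $H_{\rho}$, produces the conjugacy $\psi$ on $B(0,\rho)$. One then propagates it to all of $B(0,R)$ through the functional equation written as $\widetilde\psi=\dt^{-n}\circ\widetilde\psi\circ\ft^{n}$: for $z\in H_R$ choose $n$ with $\ft^{n}(z)\in H_{\rho}$ and set $\widetilde\psi(z)=\dt^{-n}\big(\widetilde\psi(\ft^{n}(z))\big)$; this is independent of the admissible $n$ by the equation already established on $H_{\rho}$, is quasiconformal as a finite composition of quasiconformal maps, inherits the twist relation and the conjugacy, and has dilatation tending to $0$ at the fixed point, so the same conclusion holds on $B(0,R)$ with no restriction on $\lambda$.
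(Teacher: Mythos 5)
Your skeleton coincides with the paper's: the same K\"onigs-type sequence $\widetilde{\psi}_k=\dt^{-k}\circ\ft^{k}$ (which the paper writes recursively as $\widetilde{\psi}_{k+1}=\dt^{-1}\circ\widetilde{\psi}_k\circ\ft$), the same telescoping/Cauchy estimate with ratio $L\lambda^{\alpha}$ for uniform convergence, the same passage to the limit in $\widetilde{\psi}_{k-1}\circ\ft=\dt\circ\widetilde{\psi}_k$ and in the $2\pi i$-periodicity relation, and the same reduction of the superattracting case to a fixed small $\lambda$. However, the decisive step --- a dilatation bound for $\widetilde{\psi}_k$ that is uniform in $k$ --- is exactly the part you defer as ``the main technical obstacle,'' and the mechanism you propose for it does not close under the stated hypotheses. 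You compare $\widetilde{\psi}_{n+1}=\dt^{-(n+1)}\circ\ft^{\,n+1}$ with $\widetilde{\psi}_{n}=\dt^{-(n+1)}\circ(\dt\circ\ft^{\,n})$ through the common outer map $\dt^{-(n+1)}$, which forces you to use H\"older continuity and ellipticity bounds for the \emph{iterate} $\dt^{-(n+1)}$. Hypothesis (b) controls only $\dt$ itself: $K(\dt^{-(n+1)})$ can genuinely grow like $K(\dt)^{n+1}$ (already for affine $\dt$ with constant nonzero Beltrami coefficient), the H\"older constants of the complex derivatives of the iterates grow geometrically in $n$, and the point $\mu_{\dt^{-(n+1)}}(\cdot)$ at which you apply your near-identity M\"obius comparison drifts toward $\partial\mathbb{D}$, where even a M\"obius automorphism close to the identity can have unbounded hyperbolic displacement. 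So the claim that the hyperbolic displacements form a geometric series with ratio governed by $\lambda^{\nu}K(\dt)<1$ is not established; it is precisely the content that must be proved, and nothing in the proposal shows that the competing geometric growths cancel under \eqref{eq:thm1eq4} rather than under a stronger hypothesis.

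The paper avoids this entirely by never differentiating more than one copy of $\dt^{-1}$ at a time: it runs an induction on the one-step recursion, proving $|(\widetilde{\psi}_k)_z(z)-1|\leq \ct e^{\nu\Re(z)}$ and $|(\widetilde{\psi}_k)_{\zbar}(z)|\leq \ct e^{\nu\Re(z)}$ with $\ct$ independent of $k$ (Lemmas \ref{lem:5} and \ref{lem:6}). In passing from $\widetilde{\psi}_k$ to $\widetilde{\psi}_{k+1}=\dt^{-1}\circ\widetilde{\psi}_k\circ\ft$, the inner composition with $\ft$ shrinks the inductive error by the factor $\lambda^{\nu}$ (since $\Re\ft(z)\leq\Re(z)+\log\lambda$), while the single outer application of $\dt^{-1}$ inflates it by a factor controlled by $K(\dt)$ via the $(1-|\mu_{\dt}|^2)^{-1}$ terms of Lemma \ref{lem:00}; the condition $\lambda^{\nu}K(\dt)<1-\epsilon$ is then exactly what keeps the constant $\ct$ from growing. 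This yields $|\mu_{\widetilde{\psi}_k}(z)|\leq C e^{\nu\Re(z)}$ uniformly in $k$, after which Montel's theorem gives quasiconformality and asymptotic conformality of the limit, as in your final step. If you want to salvage your route, you would need to restructure it around this one-step recursion (or else prove uniform-in-$n$ H\"older and ellipticity estimates for $\mu_{\dt^{-n}}$, which do not follow from (b) and (c) as stated). Two further small points: the assertion that $f\sim\mathcal{D}$ forces $\rho_f(s)\leq\lambda s$ is not quite right (it only gives $\Re\dt(z)\leq\Re(z)+\log\lambda+o(1)$), though nothing essential depends on it; and your appeal to ``a non-constant limit of $K$-quasiconformal maps is $K$-quasiconformal'' is of course valid only after the uniform $K$ has been produced, which returns you to the gap above.
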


There are a lot of assumptions made in Theorem \ref{thm:1}, so it is worth justifying them. By \cite[Theorem 1.5]{FP}, the assumptions that $f$ is BIP and simple at $0$ guarantee that $\dt$ is bi-Lipschitz and, in conjunction with \eqref{eq:thm1eq3}, shows that the amount of distortion $f$ has near $0$ carries over to $\dt$. The worse that $\dt$ is behaved, that is, the larger that $L$ needs to be, the stronger restrictions we need on the parameter $\lambda$ for the geometrically attracting fixed point. Moreover, the asymptotic relationship between $f$ and $\mathcal{D}$ only guarantees that $| f(z) - \mathcal{D}(z) | = o(1)$ as $z\to 0$. We need the stronger condition \eqref{eq:thm1eq1} to make our argument work. 

Just with \eqref{eq:thm1eq1}, we can show that there is a topological conjugacy between $f$ and $\mathcal{D}$. However, to promote this to a quasiconformal conjugacy (via a map that is in fact asymptotically conformal) our argument requires \eqref{eq:thm1eq2} and \eqref{eq:thm1eq3}. It is conceivable that these assumptions could be weakened, however, without a continuity condition on $\mu_{\dt}$ the map $\psi$ certainly cannot be guaranteed to be asymptotically conformal.

In the classical K\"onig's Theorem, the logarithmic transform of the map $z\mapsto \lambda z$ is $z\mapsto z + \log \lambda$ which is an isometry and thus $L=1$. Moreover, the Taylor series expansion of $f$ guarantees that we may take $\alpha =1$ as if
\[ f(z) = \lambda z + \sum_{n=2}^{\infty} a_nz^n, \quad A(z) = \lambda z,\]
then
\begin{align*}
| \ft(z) - \widetilde{A}(z)| &= \left | \log ( \lambda e^z + a_2e^{2z} + \ldots ) - \log ( \lambda e^z) \right | \\
&= O(e^{\Re z})
\end{align*}
as $\Re( z)\to -\infty$.
This shows that in this case the condition $L |\lambda|^{\alpha } <1$ reduces to $|\lambda| <1$. It is worth pointing out that if we apply Theorem \ref{thm:1} when $f$ is holomorphic, we don't quite recover the classical K\"onig's Theorem as the mean radius function $\rho_f$ is not linear if $f$ is not. 
We do however recover it from Theorem \ref{thm:1} via the following proposition.

\begin{proposition}
\label{prop:holo}
Suppose $f$ is holomorphic at $z_0=0$, fixes $0$ and satisfies $0<|f'(z_0)| <1$. Then the asymptotic representative $\mathcal{D}$ is smoothly conjugate to the map $z\mapsto f'(0) z$.
\end{proposition}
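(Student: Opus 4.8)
The plan is to compute the asymptotic representative $\mathcal{D}$ completely explicitly and then recognise the conjugacy problem as a one-dimensional Koenigs linearisation. Since $f$ is holomorphic with $f'(0)=\lambda\neq0$, it is conformal near $0$, hence differentiable at $0$ and therefore simple at $0$. Writing $f(z)=\lambda z+O(z^{2})$, and using that the image of $B(0,r)$ is to first order the disc $B(0,|\lambda|r)$ so that $\rho_f(r)=|\lambda|r+O(r^{2})$, one checks that
\[
\frac{f(r_k z)}{\rho_f(r_k)}\longrightarrow\frac{\lambda}{|\lambda|}\,z
\]
locally uniformly as $r_k\to0$; thus the unique generalised derivative of $f$ at $0$ is the rotation $g(\xi)=(\lambda/|\lambda|)\xi$, and
\[
\mathcal{D}(z)=\rho_f(|z|)\,g\!\left(\tfrac{z}{|z|}\right)=\frac{\lambda}{|\lambda|}\,\rho_f(|z|)\,\frac{z}{|z|}.
\]
At this point I would also record, from a short computation with the definition of the mean radius (Parseval applied to the boundary values of $f'$), that $\rho_f(r)^{2}$ is an even power series in $r$; hence $\rho_f$ extends to an odd real-analytic function near $0$ with $\rho_f(0)=0$ and $\rho_f'(0)=|\lambda|\in(0,1)$, so that $\rho_f(r)=r\,h(r^{2})$ for an analytic $h$ with $h(0)=|\lambda|$.

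Next I would look for a conjugacy of the special radial form $\psi(z)=\dfrac{z}{|z|}\,v(|z|)$, where $v$ is a one-variable function with $v(0)=0$ to be determined. Since $|\mathcal{D}(z)|=\rho_f(|z|)$ and $\mathcal{D}(z)/|\mathcal{D}(z)|=(\lambda/|\lambda|)(z/|z|)$, substituting into the desired identity $\psi\circ\mathcal{D}=A\circ\psi$, with $A(z)=\lambda z=f'(0)z$, reduces it to the scalar Koenigs equation
\[
v\bigl(\rho_f(r)\bigr)=|\lambda|\,v(r)
\]
for $r$ near $0$. So the whole problem collapses to linearising the one-dimensional diffeomorphism $r\mapsto\rho_f(r)$ at its fixed point $0$.

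This last step is the classical one-dimensional Koenigs theorem applied to a hyperbolic fixed point: since $\rho_f$ is a real-analytic local diffeomorphism fixing $0$ with multiplier $|\lambda|\in(0,1)$, the limit
\[
v(r):=\lim_{n\to\infty}\frac{\rho_f^{\circ n}(r)}{|\lambda|^{n}}
\]
exists in the $C^{\infty}$ (indeed real-analytic) topology near $0$, is a local diffeomorphism with $v(0)=0$, $v'(0)=1$, and solves $v\circ\rho_f=|\lambda|\,v$. Because $\rho_f$ is odd, every iterate $\rho_f^{\circ n}$, and hence $v$, is odd, so $v(r)/r=V(r^{2})$ for an analytic $V$ with $V(0)=1$; consequently $\psi(z)=z\,V(z\zbar)$ is real-analytic, fixes $0$, has derivative the identity there, and is therefore a local diffeomorphism at $0$. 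By construction $\psi\circ\mathcal{D}=A\circ\psi$, which proves the proposition, with the stronger conclusion that the conjugacy may be taken real-analytic.

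The genuinely substantive points are the explicit identification of $\mathcal{D}$ in the first step and the observation that the oddness of $\rho_f$ promotes $\psi$ to a bona fide diffeomorphism (and not merely a $C^{1}$ map) at the fixed point; once the problem is reduced to $v\circ\rho_f=|\lambda|\,v$ there is no real obstacle, since that is the classical linearisation of a hyperbolic fixed point on the line. The one place where I would be careful is in confirming, from the precise definition of the mean radius in \cite{GMRV}, that $\rho_f$ is real-analytic and odd near $0$ with $\rho_f'(0)=|\lambda|$ — everything downstream rests on that.
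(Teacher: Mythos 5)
Your proof is correct and follows the same overall architecture as the paper's: identify $\mathcal{D}$ explicitly as the radial map $z\mapsto (\lambda/|\lambda|)\,\rho_f(|z|)\,z/|z|$ via the Jacobian/orthogonality computation of $|f(B(0,r))|$, reduce the conjugacy to the one-variable functional equation $v\circ\rho_f=|\lambda|\,v$, and lift the solution radially. The one real difference is the tool used for the one-dimensional linearization: the paper invokes Sternberg's theorem (Theorem \ref{thm:ste}) for $C^{\infty}$ maps on a one-sided neighbourhood of $0$, whereas you observe that $\rho_f(r)^2=\sum_{n\geq 1} n|a_n|^2r^{2n}$ together with $a_1\neq 0$ makes $\rho_f$ odd and real-analytic near $0$, and then run the classical Koenigs limit $v=\lim_n \rho_f^{\circ n}/|\lambda|^n$, which yields a real-analytic linearizer. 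Your oddness argument --- $v(r)=rV(r^2)$, hence $\psi(z)=zV(z\overline{z})$ --- is a genuine refinement: it shows the radial lift is smooth (indeed real-analytic) at the origin itself, whereas the paper's construction $H(re^{i\theta})=h(r)e^{i\theta}$ is only manifestly smooth on the punctured neighbourhood and leaves the regularity at $0$ implicit. The trade-off is that Sternberg's theorem is the more robust route if one only knows $\rho_f$ is $C^{\infty}$; here analyticity is available, so both work. One small point common to both arguments and worth making explicit: the identity $|f(B(0,r))|=\int_{B(0,r)}J_f$ used to compute $\rho_f$ requires $f$ to be injective on $B(0,r)$, which holds for small $r$ precisely because $f'(0)\neq 0$.
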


In a similar way, we also recover B\"ottcher's Theorem.

\begin{proposition}
\label{prop:holo2}
Suppose $f$ is non-constant holomorphic at $z_0=0$, fixes $0$ and satisfies $i(0,f) =d$. Then the asymptotic representative $\mathcal{D}$ is smoothly conjugate to the map $z\mapsto  z^d$.
\end{proposition}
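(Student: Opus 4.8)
Write $f(z)=a_dz^d+a_{d+1}z^{d+1}+\cdots$ with $a_d=|a_d|e^{i\theta}\neq 0$, since $i(0,f)=d$. The plan is to make the asymptotic representative completely explicit and then to recognise the resulting map as a ``quasi-radial'' map that separates into a radial part and an angular part. Two facts about $f$ are needed. First, the rescalings $f(r_kz)/\rho_f(r_k)$ converge, as $r_k\to 0$, to the single map $\zeta\mapsto\kappa\zeta^d$, where $\kappa$ is a positive multiple of $e^{i\theta}$ (the multiple depending only on $d$ and the normalisation of the mean radius); thus $f$ is simple at $0$ with $g(\zeta)=\kappa\zeta^d$ on the unit circle, and
\[
\mathcal{D}(w)=\rho_f(|w|)\,\kappa\left(\frac{w}{|w|}\right)^d .
\]
Second, a direct computation from the definition of $\rho_f$ shows that, for holomorphic $f$, $\rho_f$ is the restriction to $(0,r_0)$ of a holomorphic function having a zero of order $d$ at $0$ with real Taylor coefficients; equivalently $\rho_f(r)=c_0r^dw(r)$ with $c_0>0$ and $w$ real-analytic and even in $r$, $w(0)=1$. (This regularity of $\rho_f$ is already necessary for $\mathcal{D}$ to be smooth away from $0$, hence for the statement to have a chance of being true, since a smooth conjugacy would force $\mathcal{D}$ itself to be smooth.)

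Next I would solve the conjugacy equation $\phi\circ\mathcal{D}=(\cdot)^d\circ\phi$ by looking for $\phi$ of the form $\phi(w)=e^{i\gamma}\,\dfrac{w}{|w|}\,v(|w|)$ with $v:(0,r_1)\to(0,\rho_1)$ increasing. Substituting and separating moduli from arguments reduces this to two one-dimensional problems: the angular equation $\gamma+\theta\equiv d\gamma\pmod{2\pi}$, solved by $\gamma=\theta/(d-1)$, and the radial equation
\[
v\big(\rho_f(r)\big)=v(r)^d .
\]
The radial equation is precisely the classical B\"ottcher functional equation for the map $\rho_f$, which by the second fact above has a superattracting fixed point at $0$ with local degree $d\geq 2$. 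Applying the one-dimensional B\"ottcher Theorem --- equivalently, applying the classical complex B\"ottcher Theorem to the holomorphic extension of $\rho_f$ and restricting the B\"ottcher coordinate to the positive reals --- produces a real-analytic increasing solution $v$ with $v(r)\sim c_0^{1/(d-1)}r$ as $r\to 0$; concretely $v(r)=\lim_{n\to\infty}\big(\rho_f^{\circ n}(r)\big)^{1/d^n}$.

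It remains to check that $\phi(w)=e^{i\gamma}(w/|w|)v(|w|)$ really is a smooth local diffeomorphism at $0$, which is the only delicate point. Using the iteration defining $v$ together with the even-power structure of $\rho_f$, I would show that $v$ is an odd function of $r$, so that $v(r)/r$ extends to a real-analytic function of $r^2$, nonzero at $0$. Therefore $\phi(w)=e^{i\gamma}w\cdot\big(v(|w|)/|w|\big)$ is real-analytic in $w=x+iy$ near $0$ (because $|w|^2=x^2+y^2$), with $D\phi(0)$ equal to multiplication by $e^{i\gamma}c_0^{1/(d-1)}$, hence invertible. A one-line substitution using the angular identity and the B\"ottcher equation then gives $\phi(\mathcal{D}(w))=\phi(w)^d$ on a neighbourhood of $0$; the conclusion holds, in fact with a real-analytic conjugacy.

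The routine parts are the computation of the infinitesimal space, the explicit form of $\mathcal{D}$, and the final substitution. The part I expect to require the most care is the radial B\"ottcher step together with the verification that the resulting radial coordinate $v$ has the parity needed to make $\phi$ smooth at the fixed point --- that is, establishing the fine structure of $\rho_f$ and tracking it through the B\"ottcher iteration. Without it one obtains only a conjugacy that is smooth off $0$ and merely continuous at $0$.
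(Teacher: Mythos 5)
Your architecture matches the paper's: make $\mathcal{D}(w)=\rho_f(|w|)e^{i\arg a_d}(w/|w|)^d$ explicit, reduce to the one-variable functional equation $v(\rho_f(r))=v(r)^d$, and tensor the radial solution with a rotation in the angular variable. Where the paper solves the radial equation by passing to the logarithmic transform, conjugating by $r\mapsto -1/r$, and invoking Sternberg's theorem, you invoke the classical B\"ottcher theorem for a claimed holomorphic extension of $\rho_f$. That substitution would be legitimate \emph{if} your structural claim about $\rho_f$ held --- but it does not, and that claim is precisely where you have deferred the difficulty to ``a direct computation.''

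The gap is the assertion that $\rho_f(r)=c_0r^dw(r)$ with $w$ real-analytic and \emph{even}. The direct computation gives $\int_{B(0,r)}J_f=\pi\sum_{n\ge d}n|a_n|^2r^{2n}$, which is the area of $f(B(0,r))$ counted \emph{with multiplicity}; dividing by $d$ would indeed yield your claim. But $\rho_f$ is defined through the area of the image \emph{set}, and $f|_{B(0,r)}$ is only asymptotically, not exactly, a $d$-fold cover of $f(B(0,r))$: points near $\partial f(B(0,r))$ have fewer than $d$ preimages in $B(0,r)$, and this deficit lives on an annular region of width comparable to $r^{d+1}$, contributing a correction of relative order $r$ --- an \emph{odd} order. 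Concretely, for $f(z)=z^2+z^3$ one finds $|f(B(0,r))|=\pi r^4\bigl(1+\tfrac{4}{\pi}r+O(r^2)\bigr)$, hence $\rho_f(r)=r^2\bigl(1+\tfrac{2}{\pi}r+O(r^2)\bigr)$; the factor $w$ has a nonzero linear term (and the $\pi$ in its coefficient signals that it does not arise from any holomorphic extension). This breaks the mechanism you rely on for smoothness of $\phi$ at the origin: the B\"ottcher coordinate inherits the odd term, $v(r)/r=c_0^{1/(d-1)}(1+\tfrac{b}{d}r+\cdots)$ with $b\neq 0$, so $\phi(w)=e^{i\gamma}w\cdot v(|w|)/|w|$ contains a genuine $w|w|$ contribution and is only finitely differentiable at $0$. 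Your own (correct) parenthetical observation then cuts the other way: since $\rho_f(r)/r^d$ is generally not a smooth even function, $\mathcal{D}$ itself is generally not $C^\infty$ at the origin, so no argument of this shape can produce a conjugacy that is $C^\infty$ there; the conjugacy has to be understood as smooth on a punctured neighbourhood. That is what the paper's construction $H(re^{i\theta})=h(r)e^{i\theta}$ actually delivers, and its Sternberg route has the further advantage of needing only smoothness, not analyticity, of $\rho_f$ away from $0$.
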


As is to be expected, we also have a version of Theorem \ref{thm:1} for repelling fixed points. However, unlike the classical K\"onigs' Theorem, it is not a trivial task to just apply Theorem \ref{thm:1} to an inverse as $f$ need not be invertible, although $\ft$ is. Then, on a technical level, there is work to do to check that the assumptions in Theorem \ref{thm:1} are preserved under taking an inverse.

\begin{theorem}[Linearization for repelling fixed points]
\label{thm:2}
Let $f:U \to \C$ be a BIP quasiregular map which fixes $0$ and suppose that $f$ is simple at $0$ with asymptotic representative $\mathcal{D}$ whose logarithmic transform $\dt$ is $L$-bi-Lipschitz. Suppose that there exist positive constants $R,\alpha, \beta, \beta', S_1,S_2,S_3$ so that in $\{z : \Re(z) < \log R \}$ we have:
\begin{enumerate}[(a)]
\item a uniform estimate on how close $\ft$ and $\dt$ are, that is,
\begin{equation}
\label{eq:thm2eq1} 
| \ft(z) - \dt(z)| < S_1e^{\alpha \Re z}, 
\end{equation}
\item the complex derivatives of $\widetilde{\mathcal{D}}$ are locally uniformly H\"older continuous, that is, 
\begin{equation}
\label{eq:thm2eq2} 
| (\dt)_z(u) - (\dt)_z(v)| \leq S_2 |u-v|^{\beta}, \quad  | (\dt)_{\zbar}(u) - (\dt)_{\zbar}(v)| \leq S_2 |u-v|^{\beta},
\end{equation}
for $|u-v| < r$,
\item a uniform estimate on how close the complex derivatives of $\ft$ and $\dt$ are, that is, 
\begin{equation} 
\label{eq:thm2eq3}
| (\ft)_z(u) - (\dt)_z(u) | \leq S_3e^{\beta'\Re(u) }, \quad | (\ft)_{\zbar}(u) - (\dt)_{\zbar}(u) | \leq S_3e^{\beta'\Re(u) }.
\end{equation}
\end{enumerate}
Set $\nu = \min \{\alpha\beta, \beta ' \}$. First, if $f$ has a geometrically repelling fixed point at $0$ with parameter 
\begin{equation}
\label{eq:thm2eq4}
\lambda > \max \{ L^{1/\alpha} , K(\dt)^{1/\nu} \},
\end{equation} 
then there exists an asymptotically conformal quasiconformal map $\psi : B(0,R) \to \C$ such that
\[ \psi \circ f = \mathcal{D} \circ \psi \]
in $B(0,R)$. Second, if $f$ has a superrepelling fixed point at $0$, then the conclusion holds without any further restrictions such as \eqref{eq:thm2eq4}.
\end{theorem}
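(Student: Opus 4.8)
The plan is to deduce Theorem~\ref{thm:2} from the proof of Theorem~\ref{thm:1} by passing to logarithmic transforms and inverting. First I would record what the repelling hypothesis says after taking logarithms: on $\{\Re z < \log R\}$ the maps $\ft$ and $\dt$ are injective and quasiconformal, $\dt$ is $L$-bi-Lipschitz, both satisfy $\widetilde{h}(z+2\pi i) = \widetilde{h}(z) + 2\pi d i$ (with $\widetilde{h}$ standing for $\ft$ or $\dt$ and $d = i(0,f)$), and \eqref{eq:repfixpt} becomes the real-part estimate $\Re\ft(z) \geq \Re z + \log\lambda$, while \eqref{eq:thm2eq1} yields $\Re\dt(z) \geq \Re z + \log\lambda'$ for each $\lambda' < \lambda$. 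Since $\ft$ (hence $\dt$) is injective and strictly increases real parts, the inverses $\ft^{-1}$ and $\dt^{-1}$ are defined on suitable left half-planes, are injective and quasiconformal with the same maximal dilatations $K(\ft^{-1}) = K(\ft)$ and $K(\dt^{-1}) = K(\dt)$, with $\dt^{-1}$ still $L$-bi-Lipschitz, and they strictly decrease real parts: $\Re\ft^{-1}(w) \leq \Re w - \log\lambda$. Thus the repelling situation for $\ft$ is an attracting situation for $\ft^{-1}$ of contraction ratio $\lambda^{-1}$, and the goal becomes to apply the argument of Theorem~\ref{thm:1} to the pair $(\ft^{-1},\dt^{-1})$.

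The core of the work is to check that hypotheses (a)--(c) are inherited by $(\ft^{-1},\dt^{-1})$. For (a): writing $w = \ft(z)$ and using $\dt^{-1}\circ\dt = \mathrm{id}$ with the $L$-Lipschitz bound for $\dt^{-1}$ and $\Re z \leq \Re w - \log\lambda$, one gets $|\ft^{-1}(w) - \dt^{-1}(w)| = |\dt^{-1}(\dt(z)) - \dt^{-1}(\ft(z))| \leq L S_1 e^{\alpha\Re z} \leq L\lambda^{-\alpha} S_1 e^{\alpha\Re w}$, so \eqref{eq:thm2eq1} holds for the inverse pair with the same exponent $\alpha$. For (b) and (c) I would invoke the complex Jacobian-inversion identities expressing the complex derivatives of $\ft^{-1}$ as $\overline{(\ft)_z}/J_{\ft}$ and $-(\ft)_{\zbar}/J_{\ft}$ evaluated at the preimage (and similarly for $\dt^{-1}$); these are legitimate because \eqref{eq:thm2eq2} and \eqref{eq:thm2eq3} force $\dt$, hence $\ft$, to be $C^1$ with Jacobian bounded above and below by the bi-Lipschitz and quasiconformality constants. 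Local H\"older continuity of $(\dt)_z$ and $(\dt)_{\zbar}$ then passes to local H\"older continuity of the complex derivatives of $\dt^{-1}$ with the same exponent $\beta$; and \eqref{eq:thm2eq3} for $(\ft^{-1},\dt^{-1})$ follows by splitting the difference of complex derivatives into a term comparing $(\ft)_z$ and $(\dt)_z$ at the common point $\ft^{-1}(w)$ (bounded via \eqref{eq:thm2eq3}, hence by $O(e^{\beta'\Re w})$ after using $\Re\ft^{-1}(w) \leq \Re w - \log\lambda$), a term comparing $(\dt)_z/J_{\dt}$ at $\ft^{-1}(w)$ and at $\dt^{-1}(w)$ (bounded via \eqref{eq:thm2eq2} and the (a)-estimate $|\ft^{-1}(w) - \dt^{-1}(w)| = O(e^{\alpha\Re w})$, hence by $O(e^{\alpha\beta\Re w})$), and an analogous Jacobian-comparison term; altogether \eqref{eq:thm2eq3} holds for the inverse pair with exponent $\nu = \min\{\alpha\beta,\beta'\}$.

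With these verifications the construction in the proof of Theorem~\ref{thm:1} applies to the contracting pair $(\ft^{-1},\dt^{-1})$, carried out at the level of logarithmic transforms (that $\ft^{-1}$ need not itself be the logarithmic transform of a disk map when $d > 1$ plays no role). Its smallness requirement \eqref{eq:thm1eq4}, read for this pair, asks that the contraction ratio $\lambda^{-1}$ be smaller than $\min\{L^{-1/\alpha}, K(\dt^{-1})^{-1/\nu}\}$, which is exactly \eqref{eq:thm2eq4}. This produces an asymptotically conformal quasiconformal $\widetilde\psi$ on a left half-plane with $\widetilde\psi\circ\ft^{-1} = \dt^{-1}\circ\widetilde\psi$, equivalently $\widetilde\psi\circ\ft = \dt\circ\widetilde\psi$. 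The construction also gives $\widetilde\psi(z+2\pi i) = \widetilde\psi(z) + 2\pi i$, so $\widetilde\psi$ is the logarithmic transform of a quasiconformal map on a punctured disk, which extends quasiconformally across $0$; using the functional equation and the contraction of $\ft^{-1}$ the domain can be taken to be all of $B(0,R)$, and descending $\widetilde\psi\circ\ft = \dt\circ\widetilde\psi$ gives $\psi\circ f = \mathcal{D}\circ\psi$ there, with $\psi$ asymptotically conformal at $0$ because $\widetilde\psi$ is at $-\infty$. Finally, in the superrepelling case the number $M := \max\{L^{1/\alpha}, K(\dt)^{1/\nu}\}$ is a fixed finite quantity, so we may choose $\lambda > M$ together with a neighbourhood on which \eqref{eq:repfixpt} holds; since \eqref{eq:thm2eq1}--\eqref{eq:thm2eq3} hold throughout $\{\Re z < \log R\}$, the first part applies (after shrinking $R$ if necessary), with no constraint of the form \eqref{eq:thm2eq4}.

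I expect the main obstacle to be the inheritance of \eqref{eq:thm2eq3} by the inverse pair: because $\ft^{-1}(w)$ and $\dt^{-1}(w)$ are genuinely distinct points, the comparison of complex derivatives cannot be done termwise and must be assembled from all three hypotheses (a)--(c) together with the repelling estimate, and it is precisely there that the combined exponent $\nu = \min\{\alpha\beta,\beta'\}$ is produced. A secondary point needing care is the descent when $d = i(0,f) > 1$: the equivariance $\widetilde\psi(z+2\pi i) = \widetilde\psi(z) + 2\pi i$ then holds only because the $O(e^{\alpha\Re w})$ errors, although magnified at each stage of the limiting construction, remain under control, which is the same mechanism that makes that construction converge.
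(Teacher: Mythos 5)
Your proposal is correct and follows essentially the same route as the paper: verify that the inverse pair $(\ft^{-1},\dt^{-1})$ inherits hypotheses (a)--(c) (with (a) via the $L$-Lipschitz bound on $\dt^{-1}$ and the real-part estimate $\Re\ft^{-1}(w)\leq \Re w-\log\lambda$, and (c) via the Jacobian-inversion formulas and a term-by-term splitting that uses (a), (b), (c) together to produce the exponent $\nu=\min\{\alpha\beta,\beta'\}$), then apply Theorem \ref{thm:1} with contraction ratio $1/\lambda$, handling the superrepelling case by choosing $\lambda$ above the fixed threshold. This matches the paper's argument in Section \ref{sec:rep}.
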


The question of uniqueness of the conjugacy in Theorem \ref{thm:1} and Theorem \ref{thm:2} is a natural one. It is not hard to see that if $\psi$ and $\varphi$ both conjugate $f$ to $\mathcal{D}$, then
\[ (\varphi \circ \psi^{-1} ) \circ \mathcal{D} = \mathcal{D} \circ( \varphi \circ \psi^{-1} ).\]
The question thus reduces to finding a classification of which maps conjugate $\mathcal{D}$ to itself or, equivalently, which maps conjugate $\dt$ to itself. We make some remarks in this direction in the final section.

\subsection{Relationship to previous work}

We now turn to discussing how these results compare with prior work. In \cite{Jia}, Jiang proved versions of K\"onig's and B\"ottcher's Theorem when $f$ has an attracting or repelling integrable asymptotically conformal fixed point at $0$ with a certain control condition. More precisely, using the notation of \cite{Jia}, if we set
\[ \omega (r) = || \mu_f |_{B(0,r)} ||_{\infty} ,\]
then $f$ is asymptotically conformal at $0$ if
\[ \omega (r) \to 0\]
as $r\to 0^+$ and called integrable asymptotically conformal if
\[ \int_0^{r_0} \frac{ \omega_f(s) }{s} \: ds < \infty .\]
The control condition needed for Jiang's result when $f'(0) = \lambda$ is the requirement that there exist constants $r>0$ and $C>1$ such that
\[ \frac{1}{C} \leq \left | \frac{ f^n(z) }{\lambda^n z } \right | \leq C \]
for all $z\in \overline{B(0,r)}$ and all $n\geq 0$. Requiring a condition valid for all iterates is very strong, and one of the benefits to our approach is that we only require assumptions on the map and its asymptotic representative, not any of the iterates.

Jiang's approach to these results is to use the theory of holomorphic motions, which is not available to us in our setting. Jiang notes that there is a relationship between these conditions and the notion of $C^{1+\alpha}$-conformality. If $f$ is $C^{1+\alpha}$-conformal at $0$, this means that
\[ f(z) = f'(0)z + O( |z|^{1+\alpha} ).\]
This representation of $f$ is the spirit in which \eqref{eq:thm1eq2} in Theorem \ref{thm:1} arises, that is, there is a certain gap between the asymptotic representative and the error term. We emphasize that our results apply to a much wider class of quasiconformal maps than asymptotically conformal maps.

The only other analogous result for quasiconformal maps that the authors are aware of is a version of B\"ottcher's Theorem by the first named author and Fryer \cite{FF} for quasiconformal maps with constant complex dilatation. More precisely, if $h:\C \to \C$ is a non-degenerate real-linear map and $f(z) = [h(z)]^2+c$ then by \cite[Theorem 2.1]{FF} there is a neighbourhood of infinity on which $f$ may be conjugated to $[h(z)]^2$. The approach in this theorem can be extended to compositions of $h$ with higher degree polynomials, but these quasiregular maps have constant complex dilatation. Again we emphasize that the results here apply to a much wider class of maps.

\subsection{Structure of the paper}

In Section \ref{sec:prelims} we recall background material on quasiregular mappings in the plane, infinitesimal spaces, BIP maps and more. Section
\ref{sec:att} is the heart of the paper where Theorem \ref{thm:1} is proved. Section \ref{sec:rep} focusses on proving Theorem \ref{thm:2}. In Section \ref{sec:old} we prove Proposition \ref{prop:holo} and Proposition \ref{prop:holo2} on recovering the classical results from our results. Finally, in Section \ref{sec:conj} we give some concluding remarks on the uniqueness of the conjugacy.

\section{Preliminaries}
\label{sec:prelims}

\subsection{Quasiregular maps in the plane}

Let $U\subset \C$ be a domain. If $K\geq 1$, we say that a homeomorphism $f:U \to \C$ is $K$-quasiconformal if $f$ is in the Sobolev space $W^{1,2}_{\operatorname{loc}}(U)$ and 
\begin{equation}
\label{eq:pre0} 
|f_{\zbar} (z) | \leq \left ( \frac{K-1}{K+1} \right ) |f_z(z)| 
\end{equation}
for almost every $z\in U$. Here $f_{\zbar}$ and $f_z$ denote the complex derivatives defined by
\[ f_{\zbar} = \frac{1}{2} \left ( f_x + i f_y\right ), \quad f_z = \frac{1}{2} \left ( f_x - if_y \right ).\]
The smallest $K\geq 1$ for which \eqref{eq:pre0} holds is called the maximal dilatation of $f$ and gives a measure of how far $f$ is from a conformal map. When $K=1$, the right hand side of \eqref{eq:pre0} is $0$ and we recover the Cauchy-Riemann equations $f_{\zbar} \equiv 0$.

The complex dilatation of a quasiconformal map is defined to be
\[ \mu_f(z) = \frac{ f_{\zbar}(z) }{f_z(z) }.\]
From \eqref{eq:pre0} it follows that if $f$ is $K$-quasiconformal then $\mu_f$ lies in the closed ball centred at $0$ of radius $(K-1)/(K+1)$ in $L^{\infty}(U)$. Conversely, if $\mu$ lies in the unit ball of $L^{\infty}(U)$ there is a quasiconformal solution $f$ to the Beltrami differential equation $f_{\zbar} = \mu f_z$.
We also note that the Jacobian is given by $J_f(z) = |f_z|^2 - |f_{\zbar}|^2$.

We say that a quasiconformal map $f$ is asymptotically conformal at $z_0$ if for every $\epsilon >0$ there exists a neighbourhood $U$ of $z_0$ so that $||\mu_{f|_U} ||_{\infty} < \epsilon$.

A quasiregular map $f:U \to \C$ is just a quasiconformal map with the requirement of injectivity dropped. Alternatively, the Stoilow decomposition of a quasiregular map could be used as a definition. This states that a quasiregular map $f$ can always be decomposed as $f = g\circ h$ where $g$ is holomorphic and $h$ is quasiconformal. Informally speaking, $h$ takes care of the distortion and $g$ takes care of the branching. 

To define the branch set, we first recall that the local index $i(z,f)$ is the infimum of $\sup_w \operatorname{card}f^{-1}(w)\cap V$, where $V$ runs over all neighbourhoods of $z$. The branch set is then the set of $z$ for which $i(z,f) >1$. It follows from the Stoilow decomposition that the branch set of a quasiregular map in the plane is a discrete set. Consequently, a quasiregular map also has a well-defined complex dilatation.

The quasiregular version of Montel's Theorem as proved by Miniowitz, but specialized to dimension two for our purposes, is as follows.

\begin{theorem}[Theorem 4, \cite{Min}]
\label{thm:montel}
Let $\mathcal{F}$ be a family of $K$-quasiregular maps (without poles) in a plane domain $U$. If every $f\in \mathcal{F}$ omits a value $w_0 \in \C$, then $\mathcal{F}$ is a normal family.
\end{theorem}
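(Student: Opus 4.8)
The plan is to reduce the statement to the classical Montel theorem, using the Stoilow decomposition together with the compactness theory of normalized $K$-quasiconformal maps.

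Since normality is a local property, it suffices to fix $z_0\in U$ and a disk $B=B(z_0,r)\Subset U$ and to show that every sequence $f_j\in\mathcal F$ has a subsequence converging locally uniformly on $B$; equivalently, by the Arzel\`a--Ascoli theorem and compactness of the spherical metric on the target $\hat\C$, that $\mathcal F$ is equicontinuous on $B$. Given such a sequence, write $f_j=\phi_j\circ h_j$ via the Stoilow decomposition, with $h_j\colon B\to\Omega_j:=h_j(B)$ a $K$-quasiconformal embedding (the holomorphic factor carries none of the dilatation, so $h_j$ is $K$-quasiconformal) and $\phi_j\colon\Omega_j\to\C$ holomorphic. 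Replacing $h_j$ by $A_j\circ h_j$ and $\phi_j$ by $\phi_j\circ A_j^{-1}$ for a suitable affine $A_j$, we normalize so that $h_j(z_0)=0$ and $\diam h_j\big(B(z_0,r/2)\big)=1$.

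The crux is the compactness of the normalized family $\{h_j\}$. The distortion theorems for $K$-quasiconformal maps --- local H\"older continuity with exponent $1/K$, together with the area and modulus lower bounds that forbid the images from collapsing --- show that $\{h_j\}$ is equicontinuous on compact subsets of $B$ and nondegenerate, so after passing to a subsequence $h_j\to h$ locally uniformly on $B$, where $h$ is again a $K$-quasiconformal embedding. Consequently every compact subset of $\Omega:=h(B)$ is eventually contained in $\Omega_j$. Since each $f_j$ omits $w_0$ and is pole-free, each $\phi_j$ omits $w_0$ and $\infty$, so the omitted-value hypothesis is inherited by the holomorphic factors; a standard exhaustion and diagonal argument, together with the classical Montel theorem applied to $\{\phi_j\}$ on subdomains of $\Omega$, yields a further subsequence with $\phi_j\to\phi$ locally uniformly on $\Omega$, with $\phi$ holomorphic or $\phi\equiv\infty$. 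A short triangle-inequality estimate, using that $\phi$ is uniformly continuous and $\phi_j\to\phi$ uniformly on compacta of $\Omega$ while $h_j\to h$ uniformly on compacta of $B$, then gives $f_j=\phi_j\circ h_j\to\phi\circ h$ locally uniformly on $B$. The limit is either $K$-quasiregular or constant, and equicontinuity follows.

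I expect the main obstacle to be the bookkeeping forced by the moving image domains $\Omega_j$: one has to pick the normalization so that the $h_j$ truly form a normal family of embeddings --- ruling out both collapse to a point and escape of the images to infinity --- and so that $\Omega_j$ converges to $\Omega$ strongly enough (Carath\'eodory kernel convergence) that the convergence $\phi_j\to\phi$ on compacta of $\Omega$ can be pulled back through $h_j$. A more self-contained alternative is a Zalcman--Pang rescaling argument: if $\mathcal F$ fails to be equicontinuous at a point, rescale to extract a nonconstant $K$-quasiregular limit $g\colon\C\to\hat\C$; since nonconstant quasiregular maps are open and discrete, a Hurwitz-type argument via the topological degree shows $g$ inherits the omitted value(s) of $\mathcal F$, and then the Stoilow decomposition of $g$ together with Picard's theorem produces a contradiction. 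The delicate points there are establishing the rescaling lemma within the quasiregular category and checking that the rescaled limit is genuinely nonconstant and quasiregular.
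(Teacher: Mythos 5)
The paper does not contain a proof of this statement; it is a direct citation of Theorem~4 of Miniowitz's paper \cite{Min}, whose proof there is a Zalcman--Pang rescaling argument --- so your ``alternative'' sketch at the end is actually closer to the source than your primary Stoilow-based route.

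Both of your routes, however, break at the same place: you only have a single omitted finite value to work with. After the Stoilow reduction you invoke classical Montel for the holomorphic factors $\phi_j$, using that each $\phi_j$ omits $w_0$ and, being pole-free, $\infty$. That is one omitted finite value; Montel's fundamental normality criterion for a family of holomorphic functions requires \emph{two}. The family $\phi_n(z)=e^{nz}$ on the unit disc is $1$-quasiregular, pole-free, omits $0$, and is not normal (the values at $z=0$ stay at $1$ while on $\Re z>0$ they blow up and on $\Re z<0$ they collapse to $0$), so that step fails as written. The same issue sinks the Zalcman alternative: the rescaled nonconstant limit $g$ has an entire Stoilow factor omitting just one value, which Picard's little theorem permits, so there is no contradiction. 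The underlying trouble is that the statement as quoted appears to drop a hypothesis: Miniowitz's theorem in dimension two needs $q(2,K)=3$ omitted points of $\widehat{\C}$, i.e.\ two omitted \emph{finite} values for a pole-free family, not one. (This does not harm the paper's results, since wherever the theorem is applied the maps in question omit an entire half-plane.) If you correct the hypothesis to two omitted finite values, your Stoilow-plus-Montel argument, together with the Carath\'eodory-kernel bookkeeping for the moving domains $\Omega_j$ that you rightly flag as the technical chore, does give a legitimate proof.
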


Note the conclusion of this theorem says that if $(f_m)_{m=1}^{\infty}$ is any sequence in $\mathcal{F}$ then there exists a subsequence which either converges locally uniformly to a $K$-quasiregular map, or diverges locally uniformly to infinity. It is key in this result that the same $K$ holds uniformly over all maps in the family.

\subsection{Complex derivatives}

Computations and estimates involving complex derivatives will be crucial for our arguments. We recall some basic formulas we will need here.
The Chain Rules for complex derivatives are
\[ (f\circ g)_z(z) = f_z(g(z)) g_z(z) + f_{\zbar}(g(z)) \overline{ g_{\zbar}(z)}\]
and
\[ (f\circ g)_{\zbar}(z) = f_z(g(z)) g_{\zbar}(z) + f_{\zbar}(g(z)) \overline{g_z(z)}.\]
By applying these two formulas to $(f^{-1} \circ f)(z) = z$, we may obtain the formulas for the complex derivatives of an inverse:
\[ 
(f^{-1})_z(f(z)) = \frac{ \overline{f_z(z)} }{|f_z(z)|^2 - |f_{\zbar}(z) |^2} 
\]
and
\[ (f^{-1})_{\zbar} (f(z)) = \frac{ -f_{\zbar}(z)}{ |f_z(z)|^2 - |f_{\zbar}(z) |^2}.\]

Setting $r_f(z) = \overline{f_z(z)}/ f_z(z) = e^{-2i \arg f_z(z)}$ (note that $|r_f(z)| = 1$), the formula for the complex dilatation of a composition is
\[ \mu_{ g\circ f}(z) = \frac{ \mu_f(z) + r_f(z) \mu_g(f(z)) }{1+r_f(z) \overline{\mu_f(z)} \mu_g(f(z)) } .\]
The formula for the complex dilatation of an inverse is thus
\[ \mu_{f^{-1}}(f(z)) = - \frac{ \mu_f(z)}{r_f(z)}.\]

\subsection{Infinitesimal spaces}

In \cite{GMRV}, a generalization for the derivative of a quasiregular mapping $f:U\to \C$ at $x_0$ was given. For simplicity, we just recall these ideas in dimension two.
For $r >0$, let
\begin{equation}
\label{eq:fe} 
f_{r}(z) = \frac{ f(z_0  + r z) - f(z_0) }{\rho_f(r)},
\end{equation}
where $\rho_f(r)$ is the mean radius of the image of a sphere of radius $r$ centered at $z_0$ and given by
\begin{equation}
\label{eq:rho} 
\rho_f(r) = \left(\frac{| f(B(x_0,r)) |}{\pi }\right)^{1/2} .
\end{equation}
Here $|\cdot| $ denotes the usual area of a set. While each $f_{r}(x)$ is only defined on a ball centered at $0$ of radius $d(z_0,\partial U) / r$, when we consider limits as $r \to 0$, we obtain maps defined on all of $\C$. As each $f_{r}$ is a quasiregular mapping with the same bound on the maximal dilatation, it follows from Theorem \ref{thm:montel} that for any sequence $r_k \to 0$, there is a subsequence for which we do have local uniform convergence to some non-constant quasiregular mapping.

\begin{definition}
\label{def:genderiv}
Let $f:U \to \C$ be a quasiregular mapping defined on a domain $U\subset \C$ and let $z_0 \in \C$. A generalized derivative $g$ of $f$ at $z_0$ is defined by
\[ g(x) = \lim_{k\to \infty} f_{r_k}(x),\]
for some decreasing sequence $(r_k)_{k=1}^{\infty}$, whenever the limit exists. The collection of generalized derivatives of $f$ at $z_0$ is called the infinitesimal space of $f$ at $z_0$ and is denoted by $T(z_0,f)$.
\end{definition}

\begin{example}
If $\lambda \in \C \setminus  \{ 0 \}$ and $f(z) = \lambda z$, then $f_r(z) = e^{i\arg \lambda }z$ for any $r>0$. Consequently, $T(0,f)$ consists only of the map $g(z) = e^{i\arg \lambda} z$. If $f(z) = z^d$ for $d\in \N$, then $f_r(z) = z^d$ for any $r>0$. It follows that $T(0,f)$ consists only of the map $g(z) = z^d$.
\end{example}

These two examples illustrate the informal property that generalized derivatives maintain the shape of $f$ near $x_0$, but they lose information on the scale of $f$. In general, if a quasiregular map $f$ is real differentiable at $z_0\in \C$, then $T(z_0,f)$ consists only of a scaled multiple of the derivative of $f$ at $z_0$. 

\begin{definition}
\label{def:simple}
Let $f:U \to \C$ be quasiregular on a domain $U$ and let $z_0 \in U$. If the infinitesimal space $T(z_0,f)$ consists of only one element, then $f$ is called simple at $z_0$.
\end{definition}

If $f$ is simple at $z_0$, then $f$ has an asymptotic representation analogous to the first degree Taylor polynomial approximation of an analytic function. As usual, for simplicity we suppose $z_0 = f(z_0) = 0$. Then \cite[Proposition 4.7]{GMRV} states that as $z\to 0$, we have
\[ f(z) \sim \mathcal{D} (x) := \rho_f(|x|) g(x/ |x| ),\]
where $p(x) \sim q(x)$ as $x\to 0$ means
\[ |p(x) - q(x) | = o( |p(x)| + |q(x) | ).\]
The map $\mathcal{D}$ is called the asymptotic representative of $f$ at (in this case) $0$.

\subsection{The logarithmic transform and BIP maps}

The logarithmic transform is a highly useful tool that has been utilized many times in complex dynamics, first by Eremenko and Lyubich. We refer to Sixsmith's survey paper \cite{Six} for more on this and, in particular, \cite[Section 5]{Six} for a development of the logarithmic transform.

In our setting, if $f$ is quasiregular in a neighbourhood of $0$, $f(0)=0$ and $i(0,f) = d$, then as described in the introduction, we can define the logarithmic transform $\ft$ of $f$. Then $\ft$ is defined in a half-plane $\{z : \Re(z) < \log R \}$ for some $R>0$ and $\ft(z+2\pi i ) = \ft(z) + 2\pi d i$. Consequently, $\ft$ maps every line $\Im(z) = t$, for $t< \log R$, onto a curve $\gamma_t$ which is invariant under translation by $2\pi d i$.

As $f$ is quasiconformal, so is $\ft$. However, quasiconformal maps do not necessarily preserve local rectifiability of curves. For example, snowflake curves can be the images of lines under quasiconformal maps. We wish to rule out examples such as these, and so we recall the definition of bounded integrable parameterization maps (or BIP maps for short) from \cite[Definition 1.2]{FP} in dimension two.

\begin{definition}
\label{def:bip}
Let $R>0$ and let $f:B(0,R) \to \C$ be quasiregular with $f(0) = 0$. We say that $f$ is a bounded integrable parameterization map if there exists $P>0$ such that for every $t<\log R$, the parameterization $\gamma_t : [-\pi , \pi] \to \C$ defined by $\gamma_t(s) = \ft ( t+is)$ satisfies
\[ \int_{-\pi }^{\pi} |\gamma_t'(s)|^2 \: ds \leq P .\]
\end{definition}

An important point to note here is that this definition in \cite{FP} was given for quasiconformal maps. However, the definition and the proofs involved in the statement of Theorem \ref{thm:FP} below go through word for word if $i(0,f) > 1$. We leave the interested reader to check that this is so.
We also remark that the full definition for BIP maps in \cite{FP} for any dimension is necessarily more complicated than the two dimensional version we need here.
Recall that $\rho_f$ is the mean radius function. Then $\widetilde{\rho_f}$ is the logarithmic transform of $\rho_f$ defined by $\widetilde{\rho_f}(t) = \log \rho_f(e^t)$.

\begin{theorem}[Theorem 1.3 and Theorem 1.5, \cite{FP}]
\label{thm:FP}
Let $K\geq 1$, $R>0$ and let $f:B(0,R) \to \C$ be a simple BIP $K$-quasiregular map with $f(0) = 0$. Then there exists $L$ depending only on $K$ and $P$ such that $\widetilde{\rho_f}$ is $L$-bi-Lipschitz on $(-\infty, \log R)$ and the asymptotic representative $\dt$ is $L$-bi-Lipschitz for $\Re(z) < \log R$.
\end{theorem}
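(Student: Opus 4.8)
The plan is to transport everything to the logarithmic transform and reduce the two conclusions --- the $L$-bi-Lipschitz bound for $\widetilde{\rho_f}$ on $(-\infty,\log R)$ and for $\dt$ on $\{\Re z<\log R\}$ --- to a single two-sided estimate for the derivative of $\widetilde{\rho_f}$, together with a soft statement about the generalized derivative. Write $d=i(0,f)$, and as in Definition~\ref{def:bip} set $\gamma_t(s)=\ft(t+is)$ and $\Omega_t=\ft(\{\Re z<t\})$; since $\ft$ is injective, $\Omega_t$ is a Jordan domain with $\partial\Omega_t=\gamma_t$, unbounded ``to the left'', and invariant under $w\mapsto w+2\pi d i$ (because $\ft(z+2\pi i)=\ft(z)+2\pi d i$ while $\exp$ identifies $w\sim w+2\pi i$).

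First I would establish an exact area formula. The map $\exp$ sends one $2\pi d i$-period of $\Omega_t$ onto $f(B(0,e^t))$ with multiplicity $d$, so changing variables first by $\exp$ and then by $\ft$,
\[
\pi e^{2\widetilde{\rho_f}(t)}=|f(B(0,e^t))|=\frac1d\int_{\{\Re z<t,\;0\le\Im z<2\pi\}}e^{2\Re\ft(z)}\,J_{\ft}(z)\,dA(z),
\]
and differentiating in $t$ (the required absolute continuity of $t\mapsto|f(B(0,e^t))|$ comes from $\ft\in W^{1,2}_{\operatorname{loc}}$) yields
\[
2\pi\,\widetilde{\rho_f}{}'(t)\,e^{2\widetilde{\rho_f}(t)}=\frac1d\int_0^{2\pi}e^{2\Re\gamma_t(s)}\,J_{\ft}(t+is)\,ds.
\]
Next I would compare $\widetilde{\rho_f}{}'$ with the BIP integrand. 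By Cauchy--Schwarz the BIP bound forces the length $\int_0^{2\pi}|\gamma_t'|$ of $\gamma_t$ to be at most $\sqrt{2\pi P}$, so $\Re\gamma_t$ oscillates by at most $\sqrt{2\pi P}$; as $f(B(0,e^t))$ is a punctured topological disk about $0$ whose boundary $\exp(\gamma_t)$ lies in the round annulus $\{e^{\min\Re\gamma_t}\le|w|\le e^{\max\Re\gamma_t}\}$ (using the maximum principle for the outer bound and $f(0)=0$ for the inner one), the value $\widetilde{\rho_f}(t)$ sits between $\min_s\Re\gamma_t(s)$ and $\max_s\Re\gamma_t(s)$, and hence $e^{2\Re\gamma_t(s)}$ and $e^{2\widetilde{\rho_f}(t)}$ differ only by a factor $e^{\pm2\sqrt{2\pi P}}$. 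The dilatation inequality $|(\ft)_{\zbar}|\le\frac{K-1}{K+1}|(\ft)_z|$ makes $J_{\ft}$, $|(\ft)_z|^2$ and $|\gamma_t'(s)|^2=|(\ft)_z-(\ft)_{\zbar}|^2(t+is)$ mutually comparable with $K$-dependent constants. Thus $\widetilde{\rho_f}{}'(t)$ is comparable, with constants depending only on $K$ and $P$, to $\frac1d\int_0^{2\pi}|\gamma_t'(s)|^2\,ds$; this integral is $\le P/d$ by hypothesis and is $\ge\frac1{2\pi}\big(\int_0^{2\pi}|\gamma_t'|\big)^2\ge\frac1{2\pi}(2\pi d)^2=2\pi d^2$ because $\gamma_t$ runs from $\gamma_t(0)$ to $\gamma_t(0)+2\pi d i$. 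Since $1\le d\le\sqrt{P/2\pi}$ (the upper bound again from the length estimate), this gives $L^{-1}\le\widetilde{\rho_f}{}'(t)\le L$ a.e. with $L=L(K,P)$, so $\widetilde{\rho_f}$ is $L$-bi-Lipschitz.

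For $\dt$ I would use that $f$ is simple, so $\gD(x)=\rho_f(|x|)\,g(x/|x|)$ for the unique generalized derivative $g$, whence $\dt(x+iy)=\widetilde{\rho_f}(x)+\phi(y)$ with $\phi(y)=\log g(e^{iy})$. The map $g$ is $K$-quasiregular and homogeneous ($g(rx)=r^{\alpha}g(x)$, $\alpha>0$, a standard property of generalized derivatives), and its branch set, being scale-invariant, is empty off the origin by discreteness, so $g|_{\C\setminus\{0\}}$ is a finite covering and the logarithmic transform $\widetilde g(x+iy)=\alpha x+\phi(y)$ is a well-defined $K$-quasiconformal map. Reading the distortion inequality off the derivative matrix $[\,\alpha\mid\phi'(y)\,]$ forces $|\phi'|\le K\alpha$ and $\Im\phi'\ge\alpha K^{-2}$ a.e.; since $\alpha$ is itself controlled by $K$ and $d$ via a ring-domain modulus estimate for the covering $g$ and $d\le\sqrt{P/2\pi}$, the function $\phi$, and (what matters for the assembly) its imaginary part, is bi-Lipschitz with constant depending only on $K$ and $P$. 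Finally, that $\dt(x+iy)=\widetilde{\rho_f}(x)+\phi(y)$ with $\widetilde{\rho_f}$ bi-Lipschitz, $\phi$ bi-Lipschitz and $\Im\phi$ co-Lipschitz is the standard recipe for $\dt$ being bi-Lipschitz, again with constant $L(K,P)$; enlarging $L$ to serve both conclusions finishes.

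I expect the main obstacle to be the comparison in the second step: recognising that $\widetilde{\rho_f}{}'$ is comparable to $\int_0^{2\pi}|\gamma_t'(s)|^2\,ds$ --- so that the BIP hypothesis supplies the upper bound on $\widetilde{\rho_f}{}'$ while the purely topological fact that $\gamma_t$ winds $d$ times supplies a matching lower bound, with no additive error at any scale --- and carefully justifying the change of variables over a fundamental domain and the differentiation under the integral sign in the first step that turn this heuristic into a rigorous identity.
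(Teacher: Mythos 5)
This statement is not proved anywhere in the paper: it is imported verbatim from \cite{FP} (Theorems 1.3 and 1.5 there), and the only in-paper commentary is the remark that the proofs of \cite{FP}, written for locally injective maps, ``go through word for word'' when $i(0,f)>1$. So there is no internal proof to compare against; what I can say is that your reconstruction is coherent and, as far as I can check, correct, and its engine --- the identity $2\pi\,\widetilde{\rho_f}{}'(t)e^{2\widetilde{\rho_f}(t)}=\frac1d\int e^{2\Re\gamma_t}J_{\ft}\,ds$, the $K$-comparability of $J_{\ft}$, $|(\ft)_z|^2$ and $|\gamma_t'|^2$, the BIP upper bound against the Cauchy--Schwarz/winding lower bound $\int|\gamma_t'|^2\ge 2\pi d^2$, and the bound $d\le\sqrt{P/2\pi}$ that keeps everything in terms of $(K,P)$ --- is exactly the right mechanism and almost certainly the one used in \cite{FP}. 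Two places deserve more care than you give them. First, the exact factor $1/d$ in the area formula needs the change-of-variables formula for quasiregular maps together with the fact that $f$ is precisely $d$-to-one off a null set on $B(0,e^t)\setminus\{0\}$; this is supplied by the covering-map structure built into the definition of $\ft$, but it is the step where the non-injective case differs from \cite{FP} and should be said explicitly. Second, the treatment of $\dt$ rests on two facts you invoke as ``standard'': the radial homogeneity $g(cz)=c^{\alpha}g(z)$ of the unique generalized derivative at a simple point (this follows from the semigroup structure of $T(0,f)$ in \cite{GMRV} when it is a singleton, but needs citing), and the two-sided bound $d/K\le\alpha\le dK$; the latter is essential, since without a lower bound on $\alpha$ you get no lower bound on $\Im\phi'$ and the co-Lipschitz half of the claim for $\dt$ collapses. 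Finally, note that Lipschitz continuity of $\phi$ (not merely an a.e.\ derivative bound) follows because $\phi(y)=\widetilde g(x+iy)-\alpha x$ is independent of $x$ and $\widetilde g$ is ACL. With those points filled in, the argument stands and correctly yields $L=L(K,P)$.
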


In particular, it follows from this result that $\mathcal{D}$ itself is quasiregular on a neighbourhood of $0$.
Note that for the case $f(z) = z^d$ we have $\ft(z) = dz$ and so
\[ \int_{-\pi }^{\pi} |\gamma_t'(s)|^2 \: ds = 2\pi d^2.\]
In particular, $P$ may depend on $i(0,f)$ and so we do not claim that the $L$ in Theorem \ref{thm:FP} is independent of $i(0,f)$.

\subsection{Sternberg's Linearization Theorem}

We will need the following important linearization theorem for smooth real functions. The statement below is the special case of Sternberg's Linearization Theorem for smooth functions, see \cite{Ste} as well as \cite{OFR} for a more recent survey of this and related results.

\begin{theorem}[\cite{Ste}]
\label{thm:ste}
Let $R,R'>0$ and let $f:[0,R] \to [0,R']$ be a $C^{\infty}$ function satisfying $f(0) =0$ and $f'(0) = \lambda$ with $\lambda \neq 0,1$. Then there exists $R_1>0$ and a $C^{\infty}$ map $\varphi$ which conjugates $f$ to $x\mapsto \lambda x$ in $[0,R_1]$, that is, 
\[ \varphi(f(x)) = \lambda \varphi(x)\]
for $x\in [0,R_1]$.
\end{theorem}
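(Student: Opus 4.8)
The plan is to prove Theorem~\ref{thm:ste} by the classical two-step scheme: a \emph{formal} normalization that removes all Taylor obstructions, followed by an \emph{analytic} ($C^\infty$) iteration argument of Koenigs type. First I would reduce to the attracting case. Since $f(0)=0$, $f$ maps $[0,R]$ into $[0,R']$, and $f'(0)=\lambda\neq 0$, positivity of $f$ near $0$ forces $\lambda>0$, and the inverse function theorem shows $f$ is a $C^\infty$ diffeomorphism of a one-sided neighbourhood of $0$ onto one; thus either $0<\lambda<1$ or $\lambda>1$. In the latter case $f^{-1}$ is $C^\infty$ near $0$ with $(f^{-1})'(0)=\lambda^{-1}\in(0,1)$, and any $C^\infty$ map conjugating $f^{-1}$ to $x\mapsto\lambda^{-1}x$ also conjugates $f$ to $x\mapsto\lambda x$; so it suffices to treat $0<\lambda<1$. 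It is also harmless to extend $f$ to a $C^\infty$ germ on a two-sided neighbourhood of $0$ (or to work with one-sided Taylor data throughout), since the conjugacy will be restricted to $[0,R_1]$ at the end.

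For the formal step, write $f(x)=\lambda x+\sum_{k\ge 2}a_kx^k$ for the Taylor expansion at $0$ and look for $\hat\varphi(x)=x+\sum_{k\ge 2}c_kx^k$ solving $\hat\varphi\circ f=\lambda\hat\varphi$ as formal power series. Comparing coefficients of $x^k$ gives $(\lambda^k-\lambda)c_k=P_k(a_2,\dots,a_k;c_2,\dots,c_{k-1})$ for an explicit polynomial $P_k$, and since $0<\lambda<1$ forces $\lambda^{k-1}\neq 1$, i.e. $\lambda^k\neq\lambda$, the coefficients $c_k$ are uniquely determined. By Borel's theorem there is a $C^\infty$ function $\varphi_0$ near $0$ with Taylor series $\hat\varphi$; as $\varphi_0'(0)=1$ it is a local $C^\infty$ diffeomorphism. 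Replacing $f$ by $\psi:=\varphi_0\circ f\circ\varphi_0^{-1}$ (and composing the eventual conjugacy with $\varphi_0$), I may therefore assume $\psi(x)=\lambda x+h(x)$ where $h$ is \emph{flat} at $0$, i.e. $h$ and all its derivatives vanish to infinite order there; this flatness is precisely the assertion that $\hat\varphi$ is a formal conjugacy.

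Now comes the iteration. Fix $\mu\in(\lambda,1)$ and a small $\delta>0$ so that $0\le\psi(x)\le\mu x$ on $[0,\delta]$; then $\psi^{\circ n}([0,\delta])\subset[0,\delta]$ and $\psi^{\circ n}(x)\le\mu^n x$. Put $\varphi_n=\lambda^{-n}\psi^{\circ n}$, so that $\varphi_n\circ\psi=\lambda\varphi_{n+1}$, and estimate the telescoping differences
\[
\varphi_{n+1}(x)-\varphi_n(x)=\lambda^{-(n+1)}\,h\big(\psi^{\circ n}(x)\big).
\]
Flatness gives $|h(y)|\le C_N\,y^N$ for every $N$, so choosing $N$ with $\mu^N<\lambda$ turns $\sum_n|\varphi_{n+1}-\varphi_n|$ into a convergent geometric series on $[0,\delta]$; hence $\varphi_n\to\varphi$ uniformly, and passing to the limit in $\varphi_n\circ\psi=\lambda\varphi_{n+1}$ gives $\varphi\circ\psi=\lambda\varphi$. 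Unwinding the Borel conjugacy, $\Phi:=\varphi\circ\varphi_0$ then satisfies $\Phi\circ f=\lambda\Phi$.

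The main obstacle is upgrading this to $C^\infty$ convergence: one must show $\varphi_n\to\varphi$ in $C^k$ on $[0,\delta]$ for every $k$. For this I would differentiate the recursion and induct on the order of the derivative. By Fa\`a di Bruno, $(\psi^{\circ n})^{(k)}$ is a universal polynomial in the quantities $\psi^{(j)}(\psi^{\circ m}(x))$ with $0\le m<n$ and $1\le j\le k$; its leading term is $\prod_{m=0}^{n-1}\psi'(\psi^{\circ m}(x))=\lambda^n\prod_{m=0}^{n-1}\big(1+\lambda^{-1}h'(\psi^{\circ m}(x))\big)$, whose $\lambda^{-n}$-normalization converges because $\sum_m|h'(\psi^{\circ m}(x))|\le C_N\sum_m\mu^{mN}<\infty$ by flatness, while every remaining term carries a factor $h^{(j)}(\psi^{\circ m}(x))$ with $j\ge 1$ — again flat, hence super-geometrically small in $m$ — multiplied by lower-order derivative data controlled by the induction hypothesis. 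Choosing $N$ large relative to $k$ at each stage makes the telescoping series $\sum_n\big(\varphi_{n+1}^{(k)}-\varphi_n^{(k)}\big)$ converge uniformly on $[0,\delta]$. Thus $\varphi\in C^\infty$ near $0$, and since $(\psi^{\circ n})'(0)=\lambda^n$ we have $\varphi'(0)=1\neq 0$, so $\varphi$, and therefore $\Phi=\varphi\circ\varphi_0$, is a $C^\infty$ diffeomorphism of a neighbourhood of $0$; restricting to $[0,R_1]$ for a suitably small $R_1>0$ gives the required conjugacy. It is precisely the preliminary flattening that keeps these higher-order estimates clean; without it one can still push through, but the bookkeeping for $k\ge 2$ becomes considerably more delicate.
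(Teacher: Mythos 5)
This theorem is quoted in the paper as a known result from \cite{Ste}; the paper supplies no proof of its own (it only remarks that a one-sided neighbourhood of the fixed point suffices). Your argument is a correct and complete rendition of the classical proof: positivity on $[0,R]$ forces $\lambda>0$, inversion reduces to $0<\lambda<1$, the absence of resonances ($\lambda^k\neq\lambda$ for $k\geq 2$) plus Borel's theorem flattens the nonlinearity, and the Koenigs-type iteration $\varphi_n=\lambda^{-n}\psi^{\circ n}$ converges in every $C^k$ because the flat remainder decays faster than any geometric rate along orbits. This is essentially Sternberg's own approach, and the one-sided issues (one-sided Taylor data at $0$, or a Seeley/Whitney extension) are handled exactly as the paper's remark anticipates; no gaps.
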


We just note that Sternberg only requires $f$ to be defined on a one-sided neighbourhood of the fixed point at $0$ to obtain this conjugacy, and not globally, as is sometimes stated in the literature.

\section{Attracting fixed points}
\label{sec:att}

In this section, we will prove Theorem \ref{thm:1}. The strategy is to take logarithmic transforms of $f$ and $\mathcal{D}$ to obtain $\ft$ and $\dt$ respectively and then define a sequence of maps via $\widetilde{\psi}_1 = \dt^{-1} \circ \ft$ and, for $k\geq 1$, 
$\widetilde{\psi}_{k+1} = \dt^{-1} \circ \widetilde{\psi}_k \circ \ft$. All of these maps will be defined on a half-plane of the form $\Re(z) < \log R$. 

We assume that $f$ and $\mathcal{D}$ are defined for $|z|<R$ so that $\ft$ and $\dt$ are defined for $\Re(z) < \log R$ and quasiconformal there. Moreover, if $d=i(0,f) = i(0,\mathcal{D})$ then both $\ft$ and $\dt$ satisfy
\[ \ft(z+2\pi i ) = \ft(z) + 2\pi d i, \quad \dt(z+2\pi i ) = \dt(z) + 2\pi d i.\]
It follows that $\widetilde{\psi}_1$ satisfies
\begin{align*}
\widetilde{\psi}_1(z+2\pi i ) &= \dt^{-1} ( \ft(z + 2\pi i ) ) \\
&= \dt^{-1} ( \ft(z) + 2\pi d i )\\
&= \dt^{-1} ( \ft(z) ) + 2\pi i \\
&= \widetilde{\psi}_1(z) + 2\pi i.
\end{align*}
This implies that there is a quasiconformal map $\psi_1$ whose logarithmic transform is $\widetilde{\psi}_1$. Continuing inductively, each element $\widetilde{\psi}_k$ of the sequence is the logarithmic transform of a quasiconformal map.

Let us assume for now that $f$ has a geometrically attracting fixed point at $0$ with parameter $\lambda \in (0,1)$. We will discuss the superattracting case at the end of this section.

The first step is to show that this sequence actually converges, and then the second step is to show that the limit is the logarithmic transform of an asymptotically conformal map which gives the desired conjugacy.

It will be important to control error terms in our analysis. To that end, we define
\[ \widetilde{E_k}(z) = \widetilde{\psi_k}(z)-z.\]
We also define $\widetilde{E}(z) = \ft (z) - \dt(z)$.

\subsection{Convergence of $\psi_k$}

Our goal in this section is to control $|\widetilde{E_k}(z)|$. To start, let us begin with analyzing $\widetilde{\psi_1}$. 
 
\begin{lemma}
 	\label{lm: E1tilde}
 	With the hypotheses of Theorem \ref{thm:1},  there exists $t_1<0$ such that when $\Re(z)<t_1,$ we have  
\[|\widetilde{E}_1(z)|<LT_1 e^{\alpha\Re(z)},\]
where $T_1$ is the constant from \eqref{eq:thm1eq1} in Theorem \ref{thm:1} and $L$ is the isometric distortion of $\dt$.
 \end{lemma}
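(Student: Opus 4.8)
The plan is to unwind the definition $\widetilde{E}_1(z) = \widetilde{\psi}_1(z) - z = \dt^{-1}(\ft(z)) - z$. Writing $w = \ft(z)$, we have $\dt^{-1}(w) - z = \dt^{-1}(w) - \dt^{-1}(\dt(z))$, so the quantity to estimate is $|\dt^{-1}(\ft(z)) - \dt^{-1}(\dt(z))|$. Since $\dt$ is $L$-bi-Lipschitz for $\Re(z) < \log R$, so is $\dt^{-1}$ on the corresponding image region (with the same constant $L$), hence
\[
|\widetilde{E}_1(z)| = |\dt^{-1}(\ft(z)) - \dt^{-1}(\dt(z))| \leq L\,|\ft(z) - \dt(z)| = L\,|\widetilde{E}(z)|.
\]
Now apply hypothesis \eqref{eq:thm1eq1}: $|\ft(z) - \dt(z)| < T_1 e^{\alpha \Re z}$ holds for $\Re(z) < \log R$, giving $|\widetilde{E}_1(z)| < L T_1 e^{\alpha \Re z}$ directly, for all $\Re(z)<\log R$.

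First I would make sure the domains match up: $\ft$ and $\dt$ are both defined on the half-plane $H = \{\Re(z) < \log R\}$, and the Lipschitz estimate for $\dt^{-1}$ needs $\ft(z)$ and $\dt(z)$ to lie in $\dt(H)$. Since $\dt(z) \in \dt(H)$ trivially, the only point to check is that $\ft(z)$ also lies in the image of $\dt$; this is why the conclusion is stated only for $\Re(z) < t_1$ for some $t_1 < 0$ rather than on all of $H$. Concretely, $\dt$ is a homeomorphism of $H$ onto its image, and because $\ft$ and $\dt$ are exponentially close by \eqref{eq:thm1eq1}, for $\Re(z)$ sufficiently negative the point $\ft(z)$ is within $T_1 e^{\alpha \Re z}$ of $\dt(z)$, hence stays inside $\dt(H)$ (using that $\dt$, being $L$-bi-Lipschitz, has image containing a definite ball around $\dt(z)$ whose radius is comparable to $\dist(z, \partial H) = \log R - \Re z$, which dominates $T_1 e^{\alpha \Re z}$ once $\Re z$ is negative enough). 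This fixes the threshold $t_1$.

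The argument is essentially a one-line application of bi-Lipschitz continuity combined with \eqref{eq:thm1eq1}, so I do not expect a substantive obstacle; the only mild subtlety is the bookkeeping of which half-plane each map and its inverse is defined on, and pinning down $t_1$ so that $\ft(z)$ stays in the domain of $\dt^{-1}$. I would phrase this carefully but briefly. No periodicity considerations are needed here beyond what was already established before the lemma (namely that $\widetilde{\psi}_1$ descends to a genuine map $\psi_1$), since the estimate is purely metric.
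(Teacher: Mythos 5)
Your proof is correct and is essentially identical to the paper's: both write $\widetilde{E}_1(z) = \dt^{-1}(\ft(z)) - \dt^{-1}(\dt(z))$, apply the $L$-bi-Lipschitz bound for $\dt^{-1}$, and then invoke \eqref{eq:thm1eq1}. Your additional care about ensuring $\ft(z)$ lies in the image of $\dt$ (which is what pins down $t_1$) is a point the paper glosses over, but it does not change the argument.
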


 \begin{proof}
 	By the definitions of $\widetilde{E_1}$, $\psi_1$ and using \eqref{eq:thm1eq1}, we have 
 	\begin{align*}
 		|\widetilde{E_1}(z)|&=|\widetilde{\psi_1}(z)-z|\\
		&= | \dt^{-1} ( \ft(z)) - z)| \\
		&= |\dt^{-1}(\ft(z)) - \dt^{-1}(\dt(z)) | \\
		& \leq L| \ft(z) - \dt(z) | \\
		&\leq LT_1 e^{\alpha \Re(z)},
	\end{align*}
as required.
 \end{proof}

Next we combine the properties of $\ft$ with those of $\widetilde{E_1}$.

\begin{lemma}
	\label{lm: bounding transform E1 }
	With the assumptions of Theorem \ref{thm:1}, and the conclusions of Lemma \ref{lm: E1tilde}, we have
	\[  |\widetilde{E}( \ft ^k(z) ) | \leq T_1 \lambda^{\alpha k} e^{\alpha \Re(z)} \quad \text{ and } \quad  |\widetilde{E_1}(\ft^{k}(z))|\leq LT_1 \lambda^{\alpha k}e^{\alpha \Re(z)}, \]
	 for $\Re(z) < t_1$ and for all $k\in\N$. 
\end{lemma}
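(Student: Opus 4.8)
The plan is to translate the geometrically attracting hypothesis into a uniform decay of $\Re\ft^k(z)$ in $k$, and then to evaluate the two pointwise bounds already available — inequality \eqref{eq:thm1eq1} for $\widetilde{E}$ and Lemma~\ref{lm: E1tilde} for $\widetilde{E_1}$ — at the points $\ft^k(z)$.

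First I would read off what the attracting condition says in logarithmic coordinates. Since $f$ has a geometrically attracting fixed point at $0$ with parameter $\lambda\in(0,1)$, there is a neighbourhood $U$ of $0$ on which $|f(z)|\le\lambda|z|$; decreasing the constant $t_1$ of Lemma~\ref{lm: E1tilde} if necessary, we may assume in addition that $e^{t_1}<R$ and $B(0,e^{t_1})\subset U$. Then for $\Re(z)<t_1$ we have $e^z\in B(0,e^{t_1})$, and since $e^{\ft(z)}=f(e^z)$,
\[ e^{\Re\ft(z)} = |f(e^z)| \le \lambda|e^z| = \lambda e^{\Re(z)}, \]
so $\Re\ft(z)\le\Re(z)+\log\lambda$. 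Because $\log\lambda<0$, this shows $\ft$ maps $\{\Re(z)<t_1\}$ into itself; hence for $\Re(z)<t_1$ every iterate $\ft^k(z)$ is defined and stays in that half-plane, and a one-line induction gives
\[ \Re\ft^k(z) \le \Re(z) + k\log\lambda \qquad (k\ge 0). \]

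It then remains only to substitute. Since $\Re\ft^k(z)<t_1<\log R$, inequality \eqref{eq:thm1eq1} holds at $\ft^k(z)$, and using $\alpha>0$, the displayed estimate for $\Re\ft^k(z)$, and $e^{\alpha k\log\lambda}=\lambda^{\alpha k}$, one gets
\[ |\widetilde{E}(\ft^k(z))| < T_1 e^{\alpha\Re\ft^k(z)} \le T_1\lambda^{\alpha k} e^{\alpha\Re(z)}. \]
Since moreover $\Re\ft^k(z)<t_1$, Lemma~\ref{lm: E1tilde} applies at the same point and, in exactly the same way, yields $|\widetilde{E_1}(\ft^k(z))| < L T_1 e^{\alpha\Re\ft^k(z)} \le L T_1\lambda^{\alpha k} e^{\alpha\Re(z)}$. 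These are the two asserted inequalities.

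There is no genuine obstacle here; the work is bookkeeping. The single point that requires care is the one that makes the iteration legitimate: one must ensure $\ft$ never pushes a point out of the region where both the attracting estimate and the bounds \eqref{eq:thm1eq1}, Lemma~\ref{lm: E1tilde} hold, which is exactly why the right quantity to control is $\Re\ft(z)-\Re(z)\le\log\lambda<0$ rather than merely $\Re\ft(z)<\log R$. (In the superattracting case the identical argument runs with $\lambda$ any fixed constant in $(0,1)$.)
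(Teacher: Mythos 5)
Your proof is correct and follows essentially the same route as the paper: derive $\Re\ft^k(z)\le\Re(z)+k\log\lambda$ from the geometrically attracting hypothesis via $e^{\ft(z)}=f(e^z)$, then substitute into \eqref{eq:thm1eq1} and Lemma \ref{lm: E1tilde}. Your explicit remark that $\log\lambda<0$ keeps the iterates in the half-plane where the hypotheses apply is a point the paper states only implicitly, but it is the same argument.
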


\begin{proof}
As $0$ is a geometrically attracting fixed point of $f$ with parameter $\lambda\in (0,1)$, it follows that
\begin{equation}
\label{eq: f_n^k bound} 
\Re(\ft^k(z)) \leq k \log \lambda + \Re(z)
\end{equation}
for all $k\in \N$.
Hence if $\Re(z) < \log R$, the same is true of $\Re \ft^k(z)$ for all $k\in \N$. It follows from Lemma \ref{lm: E1tilde} and \eqref{eq: f_n^k bound} that
	\begin{align*}
		|\widetilde{E}(\ft^k(z))|	&\leq T_1e^{\alpha \Re(\ft^k(z))}\\ 
		&\leq T_1e^{\alpha(k\log \lambda+\Re(z))}\\
		&=T_1\lambda^{\alpha k}e^{\alpha\Re(z)}.  
	\end{align*}
The result for $|\widetilde{E_1} ( \ft ^k (z)) |$ follows from this and the proof of Lemma \ref{lm: E1tilde}.
\end{proof}

We now come to the main step in this subsection.

\begin{proposition} 
	\label{prop: uniform bound}
	With the assumptions of Theorem \ref{thm:1}, there is a constant $t_2<0$ such that when $\Re(z)<t_2$ we have
	\begin{equation}
	\label{eq:etk}
	|\widetilde{E}_{k}(z)|\leq \frac{LT_1e^{\alpha \Re(z)}}{1-L\lambda^{\alpha}}. 
	\end{equation}
	Moreover, the sequence of maps $(\widetilde{\psi}_k)_{k=1}^\infty$ is uniformly convergent in the half-plane $\Re (z) < t_2$.
\end{proposition}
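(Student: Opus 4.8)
The plan is to prove the uniform estimate \eqref{eq:etk} by induction on $k$ and then to obtain uniform convergence of $(\widetilde\psi_k)$ from a telescoping argument; in both cases the engine is the inequality $L\lambda^\alpha<1$, which is the part of the hypothesis \eqref{eq:thm1eq4} saying $\lambda<L^{-1/\alpha}$ (the other condition $\lambda<K(\dt)^{-1/\nu}$ plays no role here and is saved for the dilatation estimates later). Before starting, I would fix a level $t_2\le t_1$ negative enough that every $\widetilde\psi_k$ is defined and satisfies the bound on the whole half-plane $\{\Re z<t_2\}$: since $0$ is geometrically attracting, $\ft$ maps this half-plane into itself by \eqref{eq: f_n^k bound}; since $\mathcal D$ is open with $\mathcal D(0)=0$, its logarithmic transform $\dt^{-1}$ is defined on some half-plane $\{\Re w<c\}$; and Lemma \ref{lm: E1tilde} controls how far $\widetilde\psi_1$ (hence inductively each $\widetilde\psi_k$) displaces points, so that the compositions $\dt^{-1}\circ\widetilde\psi_k\circ\ft$ never leave the admissible region. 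This is routine bookkeeping that I would state but not belabour.

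For the induction, the case $k=1$ is Lemma \ref{lm: E1tilde}, using $1-L\lambda^\alpha\le1$. For the step from $k$ to $k+1$, I would use $\widetilde\psi_{k+1}=\dt^{-1}\circ\widetilde\psi_k\circ\ft$ and $z=\dt^{-1}(\dt(z))$ to write
\[ \widetilde E_{k+1}(z)=\dt^{-1}\bigl(\ft(z)+\widetilde E_k(\ft(z))\bigr)-\dt^{-1}\bigl(\dt(z)\bigr), \]
then apply the $L$-Lipschitz bound for $\dt^{-1}$ and the triangle inequality to get
\[ |\widetilde E_{k+1}(z)|\le L|\widetilde E(z)|+L|\widetilde E_k(\ft(z))|. \]
The first term is at most $LT_1e^{\alpha\Re z}$ by \eqref{eq:thm1eq1}. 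For the second, $\Re\ft(z)\le\log\lambda+\Re z<t_2$ by \eqref{eq: f_n^k bound}, so the induction hypothesis applies at $\ft(z)$ and, again by \eqref{eq: f_n^k bound}, $|\widetilde E_k(\ft(z))|\le\frac{LT_1}{1-L\lambda^\alpha}e^{\alpha\Re\ft(z)}\le\frac{LT_1}{1-L\lambda^\alpha}\lambda^\alpha e^{\alpha\Re z}$. Adding, the coefficient of $e^{\alpha\Re z}$ is $LT_1+\frac{L^2T_1\lambda^\alpha}{1-L\lambda^\alpha}=\frac{LT_1}{1-L\lambda^\alpha}$, which closes the induction and establishes \eqref{eq:etk}.

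For convergence I would look at the consecutive differences $D_k=\widetilde\psi_{k+1}-\widetilde\psi_k$. Since $\widetilde\psi_{k+1}=\dt^{-1}\circ\widetilde\psi_k\circ\ft$ and $\widetilde\psi_k=\dt^{-1}\circ\widetilde\psi_{k-1}\circ\ft$ for $k\ge1$, the $L$-Lipschitz bound gives $|D_k(z)|\le L|D_{k-1}(\ft(z))|$; iterating down to $D_0=\widetilde\psi_1-\mathrm{id}=\widetilde E_1$ and using $\Re\ft^k(z)\le k\log\lambda+\Re z$ yields $|D_k(z)|\le L^k|\widetilde E_1(\ft^k(z))|$, which Lemma \ref{lm: bounding transform E1 } bounds by $L^{k+1}T_1\lambda^{\alpha k}e^{\alpha\Re z}=LT_1(L\lambda^\alpha)^ke^{\alpha\Re z}$. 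As $L\lambda^\alpha<1$, on $\{\Re z<t_2\}$ the series $\sum_kD_k$ is dominated by the convergent geometric series with sum $\frac{LT_1}{1-L\lambda^\alpha}e^{\alpha t_2}$, so $\widetilde\psi_k=\mathrm{id}+\sum_{j=0}^{k-1}D_j$ converges uniformly there; incidentally, since $\widetilde E_k=\sum_{j=0}^{k-1}D_j$, this reproves \eqref{eq:etk}.

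The one point that genuinely needs care is not any individual estimate but the choice of $t_2$: one must check that the half-plane $\{\Re z<t_2\}$ is a common domain on which all the $\widetilde\psi_k$ make sense and on which \eqref{eq:thm1eq1} and \eqref{eq: f_n^k bound} are available. Once $t_2$ is fixed, the whole proposition is driven by the contraction factor $L\lambda^\alpha<1$: the Lipschitz constant $L>1$ of $\dt^{-1}$ is overpowered by the factor $\lambda^\alpha$ that $\ft$ contributes because $0$ is geometrically attracting, and the two assertions follow from summing a geometric series.
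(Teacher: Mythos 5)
Your proof is correct, and while it reaches the same conclusion through the same core estimates, it is organized more economically than the paper's. For the bound \eqref{eq:etk}, you use induction on $k$, closing the step via the algebraic identity $LT_1+\frac{L^2T_1\lambda^\alpha}{1-L\lambda^\alpha}=\frac{LT_1}{1-L\lambda^\alpha}$; the paper instead unwinds the recursion all the way down to $\widetilde E_1$, producing the geometric sum $\sum_{m=0}^k(L\lambda^\alpha)^m$ explicitly. These are equivalent, but your version avoids carrying along the trailing term $L^k|\widetilde E_1(\ft^k(z))|$. For convergence, the paper invokes the Cauchy criterion for $|\widetilde\psi_q-\widetilde\psi_p|$, whereas you bound the consecutive differences $D_k=\widetilde\psi_{k+1}-\widetilde\psi_k$ and sum the dominating geometric series. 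Your observation that $\widetilde E_k=\sum_{j=0}^{k-1}D_j$ reproves \eqref{eq:etk} as a byproduct is a genuine streamlining: it shows the two assertions of the proposition are really one estimate. The one small thing to make explicit is the implicit convention $\widetilde\psi_0=\mathrm{id}$, so that $D_0=\widetilde\psi_1-\widetilde\psi_0=\widetilde E_1$ and the recursion $|D_k(z)|\le L|D_{k-1}(\ft(z))|$ applies from $k=1$; this is consistent with $\widetilde\psi_1=\dt^{-1}\circ\ft$ but should be stated. Your attention to the choice of $t_2$ (taken as $\min\{t_1,\log R\}$ in the paper) is well placed and in line with what the paper does.
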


\begin{proof}
	First we  prove the uniform bound for $|\widetilde{E}_k(z)|$. Recalling that
\[ \widetilde{\psi}_{k+1}(z) = \dt^{-1} ( \widetilde{\psi}_k ( \ft (z))) = z + \widetilde{E}_{k+1}(z),\]
we have
	\begin{align*}
		\widetilde{\psi}_{k+1}(z)&=\dt^{-1}(\ft(z)+\widetilde{E}_k(\ft(z)))\\
		&=\dt^{-1}(\dt(z)+\widetilde{E}(z)+\widetilde{E}_k(\ft(z))).
	\end{align*}
Set $t_2 = \min \{t_1 , \log R \}$. Then as $\dt$ is $L$-bi-Lipschitz, for $\Re(z) < t_2$, we have
	\begin{align*}
		|\widetilde{E}_{k+1}(z)|&=|\widetilde{\psi}_{k+1}(z)-z|\\
		&= | \dt^{-1}( \widetilde{\psi}_k ( \ft(z))) - z|\\
		&=|\dt^{-1}[\dt(z)+\widetilde{E}(z)+\widetilde{E}_k(\ft(z))]-\dt^{-1}(\dt(z))| \\
		&\leq L|\widetilde{E}(z)|+L|\widetilde{E}_k(\ft(z))|. 
	\end{align*}
Using \eqref{eq:thm1eq1}, Lemma \ref{lm: bounding transform E1 } and that $L\lambda^{\alpha} <1$ from \eqref{eq:thm1eq4}, by induction we have 
	\begin{align*}
	|\widetilde{E}_{k+1}(z)|&\leq L|\widetilde{E}(z)| + L^2|\widetilde{E} (\ft (z)) | + L^2 |\widetilde{E}_{k-1}(\ft^2(z))| \\
	& \leq L | \widetilde{E}(z) | +  L^2|\widetilde{E} (\ft (z)) | + \ldots + L^{k} |\widetilde{E}(\ft^{k-1}(z)) | + L^k | \widetilde{E}_1(\ft^k(z)) |\\
	&\leq LT_1 e^{\alpha \Re(z)} + L^2 T_1 \lambda^{\alpha} e^{ \alpha \Re(z)} + \ldots + L^k T_1 \lambda^{\alpha(k-1)} e^{\alpha \Re(z)} + L^{k+1} T_1 \lambda^{\alpha k} e^{\alpha \Re(z)}\\
	&= LT_1 e^{\alpha \Re(z)} \left ( \sum_{m=0}^{k} L^m \lambda^{\alpha m} \right)\\
	&\leq \frac{ LT_1 e^{\alpha \Re(z)} }{1-L\lambda^{\alpha} }
\end{align*}
and this latter expression holds for all $k\in \N$. We conclude that $\widetilde{E}_k$ is uniformly bounded for $\Re(z)<t_2$.

For the second part of the proposition, we will use the Cauchy criterion for uniform convergence. Given $\epsilon >0$, then as $L\lambda^{\alpha} <1$, we can find $N\in \N$ such that
\begin{equation}
\label{eq:N}
\left[\frac{L}{1-L\lambda^{\alpha}}+1\right]T_1e^{\alpha t_2}(L\lambda^{\alpha})^{N-1}<\epsilon. 
\end{equation}
	Let $\Re(z) < t_2$. Then for $q\geq  p \geq N$ we have by using the fact that $\dt$ is $L$-bi-Lipschitz $p-1$ times that
	\begin{align*}
		\left|\widetilde{\psi}_{q}(z)-\widetilde{\psi}_{p}(z)\right|&=\left|\dt^{-1}\left(\dt(z)+\widetilde{E}(z)+\widetilde{E}_{q-1}(\ft(z))\right)-\dt^{-1}\left(\dt(z)+\widetilde{E}(z)+\widetilde{E}_{p-1}(\ft(z))\right)\right|\\
		&\leq L\left|\widetilde{E}_{q-1}(\ft(z))-\widetilde{E}_{p-1}(\ft(z))\right|\\
		&=L\left|\widetilde{\psi}_{q-1}(\ft(z))-\widetilde{\psi}_{p-1}(\ft(z)) \right|\\
		&\leq L^{p-1}\left|\widetilde{E}_{q-p+1}(\ft^{p-1}(z))-\widetilde{E}_1(\ft^{p-1}(z)) \right|\\
		&\leq L^{p-1}\left(|\widetilde{E}_{q-p+1}(\ft^{p-1}(z))|+|\widetilde{E}_1(\ft^{p-1}(z))| \right). 
	\end{align*}
	As $L\lambda^{\alpha}<1$ and $p\geq N$ it follows that $(L\lambda^{\alpha})^{p-1}\leq(L\lambda^{\alpha})^{N-1}$. Using this, 
Lemma \ref{lm: bounding transform E1 }, \eqref{eq:etk} and \eqref{eq:N}, we have 
	\begin{align*}
		\left|\widetilde{\psi}_{q}(z)-\widetilde{\psi}_{p}(z)\right|&\leq L^{p-1}\left(|\widetilde{E}_{q-p+1}(\ft^{p-1}(z))|+|\widetilde{E}_1(\ft^{p-1}(z))| \right)\\
		&\leq\frac{L^{p-1}LT_1e^{\alpha t_2}\lambda^{\alpha (p-1)}}{1-L\lambda^{\alpha}}+L^{p-1}T_1\lambda^{\alpha (p-1) }e^{\alpha t_2}\\
		&\leq \left[\frac{LT_1}{1-L\lambda^{\alpha}}+1\right]T_1e^{\alpha t_2}(L\lambda^{\alpha})^{N-1}\\
		&< \epsilon
	\end{align*}
	By the Cauchy criterion, we conclude that the sequence $(\widetilde{\psi}_k)_{k=1}^{\infty}$ is uniformly convergent for $\Re(z) < t_2$.
	
\end{proof}

\subsection{Uniform quasiconformality of $\psi_k$}

Throughout this subsection, $C_i$ will denote constants that depend only on the data $T_1,T_2,T_3$ and $L$ from the hypotheses of Theorem \ref{thm:1}.
Moreover, all the statements below are assumed to hold for $\Re(z) < \log R$.

\subsubsection{Preliminary estimates}

First, we give an alternative way of expressing the complex derivatives of an inverse in terms of $f_z$ and $\mu_f$.

\begin{lemma}
\label{lem:00}
We have
\[ (f^{-1})_z(f(z)) = \frac{ 1}{f_z(z) (1- |\mu_f(z)|^2)} \]
and
\[ (f^{-1})_{\zbar} (f(z)) = - \frac{ \mu_f(z) }{\overline{f_z(z)} ( 1-|\mu_f(z)|^2) } .\]
\end{lemma}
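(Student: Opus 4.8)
The plan is to derive both identities purely algebraically, starting from the formulas for the complex derivatives of an inverse that were already recorded in the excerpt, namely
\[
(f^{-1})_z(f(z)) = \frac{\overline{f_z(z)}}{|f_z(z)|^2 - |f_{\zbar}(z)|^2}, \qquad
(f^{-1})_{\zbar}(f(z)) = \frac{-f_{\zbar}(z)}{|f_z(z)|^2 - |f_{\zbar}(z)|^2},
\]
together with the definition $\mu_f(z) = f_{\zbar}(z)/f_z(z)$. The point is simply to re-express the denominator $|f_z|^2 - |f_{\zbar}|^2$ and the numerators in terms of $f_z$ and $\mu_f$.

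First I would rewrite the Jacobian-type denominator: since $f_{\zbar} = \mu_f f_z$, we have $|f_{\zbar}|^2 = |\mu_f|^2 |f_z|^2$, hence
\[
|f_z(z)|^2 - |f_{\zbar}(z)|^2 = |f_z(z)|^2\bigl(1 - |\mu_f(z)|^2\bigr).
\]
For the first identity, substitute this into the quoted formula and cancel one factor of $|f_z(z)|^2 = f_z(z)\overline{f_z(z)}$ against the numerator $\overline{f_z(z)}$:
\[
(f^{-1})_z(f(z)) = \frac{\overline{f_z(z)}}{|f_z(z)|^2(1-|\mu_f(z)|^2)} = \frac{1}{f_z(z)(1-|\mu_f(z)|^2)}.
\]
For the second identity, write the numerator as $-f_{\zbar}(z) = -\mu_f(z) f_z(z)$ and again use the factored denominator:
\[
(f^{-1})_{\zbar}(f(z)) = \frac{-\mu_f(z) f_z(z)}{|f_z(z)|^2(1-|\mu_f(z)|^2)} = \frac{-\mu_f(z)}{\overline{f_z(z)}(1-|\mu_f(z)|^2)},
\]
using once more that $|f_z(z)|^2 = f_z(z)\overline{f_z(z)}$, so the factor $f_z(z)$ cancels leaving $\overline{f_z(z)}$ in the denominator.

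There is no genuine obstacle here; the whole argument is a two-line manipulation. The only point requiring a word of care is the implicit nondegeneracy hypothesis: these formulas presuppose $f_z(z) \neq 0$ and $|\mu_f(z)| < 1$, which holds away from the branch set of a quasiregular map (and everywhere if $f$ is quasiconformal), so that all divisions and cancellations are legitimate. I would simply note that the identities are asserted at points where $f$ is a local quasiconformal homeomorphism, exactly as in the preceding formulas for $(f^{-1})_z$ and $(f^{-1})_{\zbar}$, and the proof is complete.
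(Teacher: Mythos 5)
Your proposal is correct and is exactly the computation the paper intends: the paper's proof is the one-line remark that the identities ``follow directly from the definitions,'' meaning precisely the substitution $|f_z|^2-|f_{\zbar}|^2=|f_z|^2(1-|\mu_f|^2)$ into the quoted inverse-derivative formulas that you carry out. No further comment is needed.
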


\begin{proof}
These follow directly from the definitions.
\end{proof}

\begin{lemma}
\label{lem:0}
For $\Re z < \log R$, we have
\[ \frac{1}{L} \leq | \dt_z(u) | \leq L. \]
\end{lemma}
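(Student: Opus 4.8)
The plan is to deduce the bound on $|\dt_z(u)|$ directly from the fact that $\dt$ is $L$-bi-Lipschitz, which is supplied by Theorem \ref{thm:FP} (the hypotheses of Theorem \ref{thm:1} ask exactly that the logarithmic transform of the asymptotic representative be $L$-bi-Lipschitz). The key point is the general fact that an $L$-bi-Lipschitz map which is differentiable at a point has complex derivatives controlled by $L$: since $\dt$ is quasiregular on $\{\Re z < \log R\}$ it is differentiable almost everywhere, and at any point $u$ of differentiability we have, for the real-linear map $D\dt(u)$ with matrix representation $w \mapsto \dt_z(u) w + \dt_{\zbar}(u)\overline{w}$,
\[
\frac{1}{L}|w| \le |D\dt(u)\, w| \le L|w| \qquad \text{for all } w \in \C.
\]
Indeed, writing $w = te^{i\theta}$ and letting $t \to 0$ in $\tfrac1L|w| \le |\dt(u+w)-\dt(u)| \le L|w|$ gives precisely this, since $\dt(u+w)-\dt(u) = D\dt(u)w + o(|w|)$.

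From the two-sided linear estimate the bound on $|\dt_z(u)|$ is immediate. For the upper bound, take $w=1$: $|\dt_z(u) + \dt_{\zbar}(u)| \le L$; take $w = \overline{\mu}/|\mu|$ if $\mu := \mu_{\dt}(u) \ne 0$ (and any unit $w$ otherwise) so that $\dt_{\zbar}(u)\overline w$ is a positive multiple of $\dt_z(u) w$, which forces $|D\dt(u)w| = |\dt_z(u)|\,(1 + |\mu|) \le L$, hence $|\dt_z(u)| \le L$. For the lower bound, note the smallest value of $|D\dt(u)w|$ over unit $w$ is $|\dt_z(u)| - |\dt_{\zbar}(u)| = |\dt_z(u)|(1-|\mu|) \ge \tfrac1L$, so certainly $|\dt_z(u)| \ge \tfrac1L$. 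Since these estimates hold at every point of differentiability and $\dt$ is differentiable off a set of measure zero (in fact off the discrete branch set and the zero set of the Jacobian, but measure zero suffices), we conclude $\tfrac1L \le |\dt_z(u)| \le L$ for a.e. $u$ with $\Re u < \log R$; as the statement is meant in the a.e.\ sense for complex derivatives of a quasiregular map, this is the claim.

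I do not expect any genuine obstacle here; the lemma is essentially a restatement of the bi-Lipschitz property at the infinitesimal level. The only point requiring a modicum of care is the reduction from the two-sided bound on the operator $w \mapsto \dt_z(u)w + \dt_{\zbar}(u)\overline w$ to the scalar bound on $|\dt_z(u)|$, which amounts to computing the largest and smallest singular values of a real $2\times 2$ matrix of the form $\begin{pmatrix} a & b \end{pmatrix}$ in complex notation — namely $|\dt_z(u)| + |\dt_{\zbar}(u)|$ and $\big||\dt_z(u)| - |\dt_{\zbar}(u)|\big|$ — and then using $|\dt_{\zbar}(u)| \le |\dt_z(u)|$ (valid since $\dt$ is quasiregular, so $J_{\dt} = |\dt_z|^2 - |\dt_{\zbar}|^2 \ge 0$). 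One could alternatively phrase the whole argument without mentioning $\mu_{\dt}$ at all, just by evaluating the displayed linear estimate at the unit vectors that maximize and minimize $|\dt_z(u)w + \dt_{\zbar}(u)\overline w|$; either way the computation is a line or two and I would simply cite it as following "directly from the definitions" in the spirit of Lemma \ref{lem:00}.
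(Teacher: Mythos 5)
Your proof is correct and takes the same route as the paper: the $L$-bi-Lipschitz property of $\dt$ forces the two singular values of the differential, namely $|\dt_z(u)| + |\dt_{\zbar}(u)|$ and $|\dt_z(u)| - |\dt_{\zbar}(u)|$, to lie in $[1/L,L]$, and the bound on $|\dt_z(u)|$ then follows from the trivial inequalities $|\dt_z| \leq |\dt_z|+|\dt_{\zbar}|$ and $|\dt_z| \geq |\dt_z|-|\dt_{\zbar}|$. One small slip: taking $w = \overline{\mu}/|\mu|$ does not make $\dt_{\zbar}(u)\overline{w}$ a positive multiple of $\dt_z(u)w$. Aligning the two terms requires $\arg(\dt_z) + \arg w = \arg(\dt_{\zbar}) - \arg w$, i.e. $\arg w = \tfrac12\arg\mu$, so one should take $w$ to be a square root of $\mu/|\mu|$. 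This is cosmetic and does not affect the conclusion, since the extremal values $|\dt_z|\pm|\dt_{\zbar}|$ are standard in any case; the paper's proof just asserts them directly from bi-Lipschitzness without singling out a particular $w$.
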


\begin{proof}
As $\dt$ is $L$-bi-Lipschitz, we have
\[ |\dt_z(u)| \leq |\dt_z(u)| + |\dt_{\zbar}(u)| \leq L \]
and
\[ |\dt_z(u)| \geq |\dt_z(u)| - |\dt_{\zbar}(u)| \geq \frac{1}{L}.\]
\end{proof}

\begin{lemma}
\label{lem:1a}
For $\Re z < \log R$ we have
\[ \left | \frac{ \ft_z(z) }{ \dt_z(z) } \right | \leq 1 + C_1 e^{\nu \Re(z) }.\]
\end{lemma}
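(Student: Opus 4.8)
The goal is to bound the ratio $|\ft_z(z)/\dt_z(z)|$ by $1 + C_1 e^{\nu\Re(z)}$ where $\nu = \min\{\alpha\beta,\beta'\}$. The idea is to write
\[
\left|\frac{\ft_z(z)}{\dt_z(z)}\right| = \left|1 + \frac{\ft_z(z) - \dt_z(z)}{\dt_z(z)}\right| \leq 1 + \frac{|\ft_z(z) - \dt_z(z)|}{|\dt_z(z)|},
\]
and then estimate the numerator and denominator separately. The denominator is controlled from below by $1/L$ thanks to Lemma \ref{lem:0}. For the numerator, I would like to invoke hypothesis \eqref{eq:thm1eq3}, which gives $|\ft_z(u) - \dt_z(u)| \leq T_3 e^{\beta'\Re(u)}$ directly. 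If that were the only ingredient needed, the bound would hold with $C_1 = LT_3$ and the exponent $\beta'$; the reason the statement has $\nu = \min\{\alpha\beta,\beta'\}$ rather than just $\beta'$ suggests that a slightly more careful argument is intended, presumably to keep the exponent consistent with the other estimates in this subsection (Lemmas that will follow for $\ft_{\zbar}$, $\mu_{\ft}$, etc., where the $\alpha\beta$ term genuinely enters via combining \eqref{eq:thm1eq1} with \eqref{eq:thm1eq2}).

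\textbf{Key steps.} First, apply Lemma \ref{lem:0} to get $1/|\dt_z(z)| \leq L$ for $\Re z < \log R$. Second, apply the first inequality in \eqref{eq:thm1eq3} to bound $|\ft_z(z) - \dt_z(z)| \leq T_3 e^{\beta'\Re(z)}$. Third, since $\beta' \geq \nu$ and $\Re(z) < \log R$, we have $e^{\beta'\Re(z)} = e^{(\beta'-\nu)\Re(z)}e^{\nu\Re(z)} \leq R^{\beta'-\nu}e^{\nu\Re(z)}$ (using $\Re(z) < \log R$, so $e^{\Re z} < R$, and $\beta' - \nu \geq 0$). Combining, $|\ft_z(z) - \dt_z(z)|/|\dt_z(z)| \leq LT_3 R^{\beta'-\nu}e^{\nu\Re(z)}$. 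Setting $C_1 = LT_3 R^{\beta'-\nu}$ (a constant depending only on the data $T_3, L$ and the fixed quantities $R,\beta',\nu$) gives the claim via the triangle inequality above.

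\textbf{Main obstacle.} This particular lemma is essentially a triangle-inequality bookkeeping step, so there is no serious obstacle; the only care needed is the conversion of the $\beta'$-exponential into a $\nu$-exponential, which requires that $R$ be fixed in advance and that one allow the constant $C_1$ to absorb the factor $R^{\beta'-\nu}$. I would make sure the convention stated at the start of the subsection — that $C_i$ depends only on $T_1,T_2,T_3,L$ — is understood to also permit dependence on the ambient constants $R,\alpha,\beta,\beta'$ (equivalently on $\nu$), since otherwise the exponent normalization is not literally free; a one-line remark to that effect, or simply absorbing $R^{\beta'-\nu}$ silently as the rest of the paper appears to do, suffices. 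If instead the authors prefer to keep $\beta'$ honestly, the lemma could be stated with $e^{\beta'\Re(z)}$ and the downgrade to $\nu$ deferred, but writing it with $\nu$ now is harmless and streamlines the later inductive estimates.
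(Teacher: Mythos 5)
Your proposal is correct and matches the paper's own proof: the paper likewise writes $|\ft_z(z)/\dt_z(z) - 1| \leq T_3 |\dt_z(z)|^{-1} e^{\beta'\Re(z)}$ via \eqref{eq:thm1eq3}, bounds $|\dt_z(z)|^{-1} \leq L$ by Lemma \ref{lem:0}, and concludes by the triangle inequality. Your extra care in converting the exponent $\beta'$ to $\nu$ by absorbing $R^{\beta'-\nu}$ into the constant is exactly what the paper does implicitly.
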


\begin{proof}
By \eqref{eq:thm1eq3} we have
\[ \left | \frac{ \ft_z(z) }{ \dt_z(z)} - 1 \right | \leq T_3|\dt_z(z) |^{-1} e^{\beta ' \Re(z) } .\]
Therefore by Lemma \ref{lem:0} and the reverse triangle inequality, the result follows.
\end{proof}

\begin{lemma}
\label{lem:1b}
For $\Re z < \log R$ we have
\[ \left | \frac{ \dt_z(z) }{\dt_z( \widetilde{\psi_k}(z) ) } \right | \leq 1 + C_2 e^{\nu \Re(z) } .\]
\end{lemma}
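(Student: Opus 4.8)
The plan is to exploit that $\dt_z$ is bounded below by Lemma \ref{lem:0} and is H\"older continuous by \eqref{eq:thm1eq2}, combined with the uniform-in-$k$ smallness of the error term $\widetilde{E_k}$ coming from Proposition \ref{prop: uniform bound}. The point is that $\widetilde{\psi_k}(z)$ is exponentially close to $z$, so $\dt_z(\widetilde{\psi_k}(z))$ is exponentially close to $\dt_z(z)$, and since the denominator is bounded away from $0$ the ratio is within an exponentially small amount of $1$.

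Concretely, I would first write, using the lower bound $|\dt_z| \geq 1/L$ from Lemma \ref{lem:0},
\[ \left| \frac{\dt_z(z)}{\dt_z(\widetilde{\psi_k}(z))} - 1 \right| = \frac{\left| \dt_z(\widetilde{\psi_k}(z)) - \dt_z(z) \right|}{\left| \dt_z(\widetilde{\psi_k}(z)) \right|} \leq L \left| \dt_z(\widetilde{\psi_k}(z)) - \dt_z(z) \right|. \]
Then, since $\widetilde{\psi_k}(z) - z = \widetilde{E_k}(z)$ and by Proposition \ref{prop: uniform bound} we have $|\widetilde{E_k}(z)| \leq LT_1 e^{\alpha \Re(z)}/(1-L\lambda^{\alpha})$ uniformly in $k$ for $\Re(z) < t_2$, after shrinking $R$ if necessary so that this quantity is smaller than the $r$ in \eqref{eq:thm1eq2}, the first H\"older estimate in \eqref{eq:thm1eq2} gives
\[ \left| \dt_z(\widetilde{\psi_k}(z)) - \dt_z(z) \right| \leq T_2 |\widetilde{E_k}(z)|^{\beta} \leq T_2 \left( \frac{LT_1}{1-L\lambda^{\alpha}} \right)^{\beta} e^{\alpha\beta \Re(z)}. \]
Since $\Re(z) < 0$ and $\nu = \min\{\alpha\beta,\beta'\} \leq \alpha\beta$, we have $e^{\alpha\beta\Re(z)} \leq e^{\nu\Re(z)}$, so combining the two displays and applying the triangle inequality yields
\[ \left| \frac{\dt_z(z)}{\dt_z(\widetilde{\psi_k}(z))} \right| \leq 1 + C_2 e^{\nu\Re(z)}, \qquad C_2 = LT_2 \left( \frac{LT_1}{1-L\lambda^{\alpha}} \right)^{\beta}, \]
a constant depending only on the data $T_1,T_2,L$ and $\lambda$, as required.

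The only mild subtlety, and the step I would be most careful about, is ensuring the H\"older inequality \eqref{eq:thm1eq2} is legitimately applicable, i.e. that $|\widetilde{\psi_k}(z) - z| = |\widetilde{E_k}(z)| < r$ for every $k$; this is precisely why we work in a half-plane $\Re(z) < \log R$ with $R$ taken small enough (consistently with the $t_2$ of Proposition \ref{prop: uniform bound}), and it is the uniform-in-$k$ nature of that bound that makes the constant $C_2$ independent of $k$. Beyond this bookkeeping, the argument is just the triangle inequality. (In the superattracting case one uses the corresponding uniform bound on $|\widetilde{E_k}|$ established there in place of \eqref{eq:etk}, and the same computation goes through.)
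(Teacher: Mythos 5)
Your proof is correct and follows essentially the same route as the paper: bound the denominator below via Lemma \ref{lem:0}, apply the H\"older condition \eqref{eq:thm1eq2} to $|\dt_z(\widetilde{\psi_k}(z))-\dt_z(z)|$, and control $|\widetilde{\psi_k}(z)-z|=|\widetilde{E_k}(z)|$ by the uniform-in-$k$ exponential bound. If anything you are more careful than the paper, which cites the wrong displayed equations and elides the check that $|\widetilde{E_k}(z)|<r$; your bookkeeping on that point is exactly what is needed.
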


\begin{proof}
Using \eqref{eq:thm1eq1} and \eqref{eq:thm1eq3}, we have
\[ \left | \frac{ \dt_z(z) }{ \dt_z(\widetilde{\psi}_k(z)) } - 1 \right | \leq T_2\left ( T_1 e^{\alpha \Re(z)} \right )^{\beta} |  \dt_z(\widetilde{\psi}_k(z)) |^{-1}.\]
Therefore by Lemma \ref{lem:0} and the triangle inequality, the result follows.
\end{proof}

\begin{lemma}
\label{lem:2}
With the assumptions of Theorem \ref{thm:1}, there exists a constant $C_3$ such that
\[ |\mu_{\dt}(u) - \mu_{\dt}(v) | \leq C_3 |u-v|^{\beta} .\]
\end{lemma}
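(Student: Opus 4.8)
The plan is to express $\mu_{\dt} = \dt_{\zbar}/\dt_z$ and estimate the difference $\mu_{\dt}(u) - \mu_{\dt}(v)$ by writing it as a single fraction. First I would write
\[ \mu_{\dt}(u) - \mu_{\dt}(v) = \frac{\dt_{\zbar}(u)}{\dt_z(u)} - \frac{\dt_{\zbar}(v)}{\dt_z(v)} = \frac{ \dt_{\zbar}(u)\dt_z(v) - \dt_{\zbar}(v) \dt_z(u) }{ \dt_z(u) \dt_z(v) }. \]
The denominator is controlled from below by Lemma \ref{lem:0}, which gives $|\dt_z(u) \dt_z(v)| \geq L^{-2}$. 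For the numerator, the standard trick is to add and subtract a cross term: write
\[ \dt_{\zbar}(u)\dt_z(v) - \dt_{\zbar}(v)\dt_z(u) = \bigl( \dt_{\zbar}(u) - \dt_{\zbar}(v) \bigr) \dt_z(v) + \dt_{\zbar}(v)\bigl( \dt_z(v) - \dt_z(u) \bigr). \]
Then apply the H\"older estimates \eqref{eq:thm1eq2} to each difference of derivatives, bounding $|\dt_{\zbar}(u) - \dt_{\zbar}(v)| \leq T_2 |u-v|^{\beta}$ and $|\dt_z(v) - \dt_z(u)| \leq T_2|u-v|^{\beta}$, while bounding the remaining factors $|\dt_z(v)| \leq L$ and $|\dt_{\zbar}(v)| \leq L$ (the latter since $|\dt_{\zbar}(v)| \leq |\dt_z(v)| + |\dt_{\zbar}(v)| \leq L$ by the bi-Lipschitz hypothesis).

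Combining these, the numerator is bounded by $2 L T_2 |u-v|^{\beta}$ and the denominator is bounded below by $L^{-2}$, so
\[ |\mu_{\dt}(u) - \mu_{\dt}(v)| \leq 2 L^3 T_2 |u-v|^{\beta}, \]
and one takes $C_3 = 2 L^3 T_2$, which depends only on $L$ and $T_2$ as required. One caveat is that the H\"older estimates in \eqref{eq:thm1eq2} are only assumed for $|u-v| < r$; for $|u-v| \geq r$ one can absorb this into the constant since $|\mu_{\dt}| \leq (K-1)/(K+1) < 1$ is bounded, so $|\mu_{\dt}(u) - \mu_{\dt}(v)| \leq 2 \leq (2/r^{\beta}) |u-v|^{\beta}$, and enlarging $C_3$ accordingly handles the global statement — or one simply restricts attention to $|u-v| < r$ as in the hypothesis, which is all that is needed downstream.

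There is no real obstacle here; this is a routine quotient-rule estimate. The only point requiring a little care is making sure the denominator does not vanish, which is exactly what Lemma \ref{lem:0} supplies, and keeping track of whether one wants the estimate for all $u,v$ or only for $|u-v| < r$. I would state and use it in the latter (local) form to match \eqref{eq:thm1eq2} directly, since that is how it will be invoked in the subsequent arguments controlling $\mu_{\widetilde{\psi}_k}$.
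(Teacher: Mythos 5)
Your proof is correct and follows essentially the same route as the paper: clear denominators, add and subtract a cross term, apply the H\"older estimate \eqref{eq:thm1eq2} to the two differences, and control the remaining factors via the bi-Lipschitz bounds from Lemma \ref{lem:0}. The only difference is bookkeeping: the paper chooses the cross term $\dt_z(u)\dt_{\zbar}(u)$, which lets one $\dt_z(u)$ cancel against the denominator and yields the slightly sharper constant $2LT_2$, whereas your choice of cross term leaves $C_3 = 2L^3T_2$ --- either constant is perfectly adequate for the lemma's purpose.
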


\begin{proof}
Via the triangle inequality, the fact $\dt$ is $L$-bi-Lipschitz, \eqref{eq:thm1eq2}, and that $||\mu_{\dt} ||_{\infty} < 1$, we have
\begin{align*}
| \mu_{\dt}(u)  - \mu_{\dt}(v)  | &= \left |  \frac{ \dt_{\zbar}(u)}{\dt_z(u)} - \frac{ \dt_{\zbar}(v) }{\dt_z(v) } \right | \\
&= \frac{ | \dt_{\zbar}(u)\dt_z(v) - \dt_{\zbar}(v)\dt_z(u) | }{|\dt_z(u) | \cdot | \dt_z(v)| } \\
&\leq \frac{ | \dt_{\zbar}(u)\dt_z(v)  - \dt_z(u) \dt_{\zbar}(u)|}{|\dt_z(u) | \cdot | \dt_z(v)| } + 
\frac{ | \dt_z(u) \dt_{\zbar}(u) - \dt_{\zbar}(v)\dt_z(u)  |}{|\dt_z(u) | \cdot | \dt_z(v)| } \\
&\leq  \frac{ || \mu_{\dt} ||_{\infty} |\dt_z(v) - \dt_z(u) | }{|\dt_z (v) |}  +  \frac{ |\dt_{\bar{z}}(u) - \dt_{\bar{z}}(v) | }{|\dt_z(v)|} \\
&< 2LT_2 | u-v|^{\beta},
\end{align*}
as required.
\end{proof}

\begin{lemma}
\label{lem:2b}
With the assumptions of Theorem \ref{thm:1}, there exists a constant $C_4$ such that
\[ | \mu_{\ft}(z) - \mu_{\dt}(z)| \leq C_4 e^{\nu \Re(z)}.\]
\end{lemma}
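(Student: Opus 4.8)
The plan is to estimate the complex dilatation difference $\mu_{\ft}(z) - \mu_{\dt}(z)$ by writing it as a single fraction and controlling both the numerator and the denominators using the hypotheses \eqref{eq:thm1eq3} (closeness of the complex derivatives of $\ft$ and $\dt$) together with the lower bound on $|\dt_z|$ from Lemma \ref{lem:0}. Concretely, write
\[
\mu_{\ft}(z) - \mu_{\dt}(z) = \frac{\ft_{\zbar}(z)}{\ft_z(z)} - \frac{\dt_{\zbar}(z)}{\dt_z(z)} = \frac{\ft_{\zbar}(z)\dt_z(z) - \dt_{\zbar}(z)\ft_z(z)}{\ft_z(z)\,\dt_z(z)}.
\]
Then insert and subtract the cross term $\dt_{\zbar}(z)\dt_z(z)$ in the numerator (exactly as in the proof of Lemma \ref{lem:2}) to get
\[
\ft_{\zbar}(z)\dt_z(z) - \dt_{\zbar}(z)\ft_z(z) = \dt_z(z)\bigl(\ft_{\zbar}(z) - \dt_{\zbar}(z)\bigr) + \dt_{\zbar}(z)\bigl(\dt_z(z) - \ft_z(z)\bigr).
\]
By \eqref{eq:thm1eq3} both $|\ft_{\zbar}(z) - \dt_{\zbar}(z)|$ and $|\dt_z(z) - \ft_z(z)|$ are at most $T_3 e^{\beta'\Re(z)}$, and by Lemma \ref{lem:0} we have $|\dt_z(z)| \le L$, while $|\dt_{\zbar}(z)| \le \|\mu_{\dt}\|_\infty |\dt_z(z)| < L$. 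Hence the numerator is bounded by $2LT_3 e^{\beta'\Re(z)}$.

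For the denominator I would use $|\dt_z(z)| \ge 1/L$ from Lemma \ref{lem:0}, and then note that $|\ft_z(z)|$ is bounded below as well: by \eqref{eq:thm1eq3}, $|\ft_z(z)| \ge |\dt_z(z)| - T_3 e^{\beta'\Re(z)} \ge 1/L - T_3 e^{\beta'\Re(z)}$, which is bounded below by, say, $1/(2L)$ once $\Re(z)$ is small enough — and since we work in $\{\Re(z) < \log R\}$ we may shrink $R$ (or absorb the resulting constant; the statement quantifies over $\Re z < \log R$ with $R$ already fixed, so one argues that the inequality holds after possibly enlarging $C_4$ to cover the bounded range of $\Re z$ near $\log R$, or simply notes $e^{\nu\Re z}$ is bounded there). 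Combining, $|\mu_{\ft}(z) - \mu_{\dt}(z)| \le 2L^2 T_3 e^{\beta'\Re(z)} \cdot (\text{constant})$. Finally, since $\nu = \min\{\alpha\beta,\beta'\} \le \beta'$ and $\Re(z) < \log R$ so that $e^{\Re z}$ is bounded, we have $e^{\beta'\Re(z)} \le e^{(\beta'-\nu)\log R} e^{\nu\Re(z)}$, which converts the bound into the desired form $C_4 e^{\nu\Re(z)}$.

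The only mild subtlety — not really an obstacle — is making sure the lower bound on $|\ft_z|$ is genuinely uniform on the half-plane $\{\Re z < \log R\}$; strictly, \eqref{eq:thm1eq3} gives $|\ft_z(z)| \ge 1/L - T_3 e^{\beta'\Re z}$, which could in principle be small or negative near $\Re z = \log R$. This is handled exactly as elsewhere in this subsection: $\ft$ is $K$-quasiconformal, hence $\ft_z$ never vanishes and $|\mu_{\ft}| \le (K-1)/(K+1) < 1$, so $|\mu_{\ft} - \mu_{\dt}| \le |\mu_{\ft}| + |\mu_{\dt}| \le 2$ everywhere, and on the compact-in-$\Re z$ region where the estimate above degrades, $e^{\nu\Re z}$ is bounded below by a positive constant; enlarging $C_4$ accordingly absorbs this range. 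So the estimate $|\mu_{\ft}(z) - \mu_{\dt}(z)| \le C_4 e^{\nu\Re(z)}$ holds throughout $\{\Re z < \log R\}$ with $C_4$ depending only on $T_1,T_2,T_3,L$ (and here in fact only on $T_3$ and $L$).
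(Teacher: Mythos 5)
Your proof is correct and follows the same basic strategy as the paper: put the difference over a common denominator, insert a cross term, and invoke \eqref{eq:thm1eq3} for the numerator and Lemma \ref{lem:0} for the denominator. The one difference is the choice of cross term. You insert $\dt_{\zbar}(z)\dt_z(z)$, which leaves $|\ft_z(z)|$ in the denominator of both resulting fractions and forces you to establish a uniform lower bound on $|\ft_z|$ — the ``mild subtlety'' you then patch by shrinking $R$ and absorbing the near-boundary strip into $C_4$. The paper instead inserts $\ft_{\zbar}(z)\ft_z(z)$, so the two terms become
\[
\frac{|\ft_{\zbar}(z)|\,|\dt_z(z)-\ft_z(z)|}{|\ft_z(z)|\,|\dt_z(z)|}
=\frac{\|\mu_{\ft}\|_{\infty}\,|\dt_z(z)-\ft_z(z)|}{|\dt_z(z)|}
\quad\text{and}\quad
\frac{|\ft_{\zbar}(z)-\dt_{\zbar}(z)|\,|\ft_z(z)|}{|\ft_z(z)|\,|\dt_z(z)|}
=\frac{|\ft_{\zbar}(z)-\dt_{\zbar}(z)|}{|\dt_z(z)|},
\]
and $|\ft_z|$ cancels entirely; only $|\dt_z(z)|\geq 1/L$ is needed, yielding the clean constant $2LT_3$. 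So your extra bookkeeping is sound but avoidable. Both arguments perform the same (harmless) conversion from the exponent $\beta'$ to $\nu=\min\{\alpha\beta,\beta'\}$ at the end.
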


\begin{proof}
Via the triangle inequality, the fact that $\dt$ is bi-Lipschitz and \eqref{eq:thm1eq3}, we have
\begin{align*}
|\mu_{\ft}(z) - \mu_{\dt}(z) | &= \left | \frac{ \ft_{\zbar}(z) }{\ft_z(z)} - \frac{ \dt_{\zbar}(z)} {\dt_z(z)} \right | \\
& \leq \frac{ | \ft_{\zbar}(z) \dt_z(z) - f_{\zbar}(z) f_z(z) |}{ |\ft_z(z)| \cdot |\dt_z(z)| } + \frac{ | \ft_{\zbar}(z) \ft_z(z) - \dt_{\zbar}(z) \ft_z(z)| }{|\ft_z(z)| \cdot |\dt_z(z)| } \\
&\leq \frac{ || \mu_{\ft} ||_{\infty} |\dt_z(z) - \ft_z(z)| }{ |\dt_z(z)|} + \frac{| \ft_{\zbar}(z) - \dt_{\zbar}(z)| }{|\dt_z(z)|} \\
&\leq 2LT_3 e^{\nu \Re(z)},
\end{align*}
as required.
\end{proof}

\begin{corollary}
\label{cor:2}
With the assumptions of Theorem \ref{thm:1}, there exist constants $C_5,C_6$ such that for all $k\in \N$ we have
\[ |\mu_{\ft}(z) - \mu_{\dt}( \widetilde{\psi}_k(z) ) | \leq C_5 e^{\nu \Re(z) }\]
and
\[ \left | \frac{  1 - \overline{ \mu_{\ft}(z) } \mu_{\dt} ( \widetilde{ \psi}_k (z) ) }{1 - | \mu_{\dt} ( \widetilde{ \psi}_k (z) ) |^2 } -1 \right | \leq C_6 e^{\nu \Re(z) }.\]
\end{corollary}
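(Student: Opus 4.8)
The plan is to derive both estimates from the Lipschitz-type bounds already established, namely Lemma \ref{lem:2} (H\"older continuity of $\mu_{\dt}$), Lemma \ref{lem:2b} (closeness of $\mu_{\ft}$ and $\mu_{\dt}$), together with the control on $\widetilde{E}_k(z) = \widetilde{\psi}_k(z) - z$ coming from Proposition \ref{prop: uniform bound}. For the first inequality I would insert the intermediate term $\mu_{\dt}(z)$ and apply the triangle inequality:
\[
|\mu_{\ft}(z) - \mu_{\dt}(\widetilde{\psi}_k(z))| \leq |\mu_{\ft}(z) - \mu_{\dt}(z)| + |\mu_{\dt}(z) - \mu_{\dt}(\widetilde{\psi}_k(z))|.
\]
The first term is bounded by $C_4 e^{\nu \Re(z)}$ from Lemma \ref{lem:2b}. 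For the second, Lemma \ref{lem:2} gives a bound of $C_3 |z - \widetilde{\psi}_k(z)|^{\beta} = C_3 |\widetilde{E}_k(z)|^{\beta}$, and by Proposition \ref{prop: uniform bound} we have $|\widetilde{E}_k(z)| \leq \frac{LT_1}{1-L\lambda^{\alpha}} e^{\alpha \Re(z)}$ uniformly in $k$ for $\Re(z) < t_2$; hence this term is at most a constant times $e^{\alpha \beta \Re(z)} \leq e^{\nu \Re(z)}$ since $\nu \leq \alpha\beta$ and $\Re(z) < 0$. Absorbing constants yields the first bound with $C_5 = C_4 + C_3 (LT_1/(1-L\lambda^{\alpha}))^{\beta}$ (after possibly shrinking to ensure $\Re(z) < t_2$, which is harmless as all estimates are stated for $\Re(z)$ sufficiently negative).

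For the second inequality, write $w = \mu_{\dt}(\widetilde{\psi}_k(z))$ and $m = \mu_{\ft}(z)$, so the quantity in question is
\[
\left| \frac{1 - \bar m w}{1 - |w|^2} - 1 \right| = \left| \frac{1 - \bar m w - 1 + |w|^2}{1 - |w|^2} \right| = \left| \frac{\bar w (w - m)}{1 - |w|^2} \cdot \frac{\overline{(\cdot)}}{} \right|
\]
— more cleanly, $1 - \bar m w - (1 - |w|^2) = |w|^2 - \bar m w = \bar w(w - m) + (\bar w - \bar m)w$ is not quite symmetric, so I would simply estimate $|1 - \bar m w - (1-|w|^2)| = |w|\,|\bar w - \bar m|\cdot|\text{something}|$; concretely $|w|^2 - \bar m w = \bar w w - \bar m w = (\bar w - \bar m) w$, giving numerator bounded by $|w| \, |m - w| \leq |m - w|$. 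The denominator $1 - |w|^2$ is bounded below by $1 - k^2 > 0$ where $k = \|\mu_{\dt}\|_\infty \vee \|\mu_{\ft}\|_\infty < 1$, uniformly. Therefore the whole expression is at most $\frac{1}{1-k^2} |m - w| = \frac{1}{1-k^2}|\mu_{\ft}(z) - \mu_{\dt}(\widetilde{\psi}_k(z))| \leq \frac{C_5}{1-k^2} e^{\nu\Re(z)}$ by the first part, so $C_6 = C_5/(1-k^2)$ works.

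The only mild subtlety — and the place I would be most careful — is the uniformity in $k$: the estimates must not degrade as $k \to \infty$. This is guaranteed precisely because Proposition \ref{prop: uniform bound} provides a bound on $|\widetilde{E}_k(z)|$ that is independent of $k$, and because the ellipticity constant $k < 1$ bounding $\|\mu_{\dt}\|_\infty$ and $\|\mu_{\ft}\|_\infty$ is fixed. No iteration of the maps is involved here, only a pointwise substitution $u = z$, $v = \widetilde{\psi}_k(z)$ into Lemma \ref{lem:2}, so once the uniform displacement bound is in hand the argument is a routine chain of triangle inequalities. I would also note that one should restrict to $\Re(z) < t_2$ throughout (rather than merely $\Re(z) < \log R$) so that Proposition \ref{prop: uniform bound} applies; shrinking $R$ if necessary, we may and do assume $t_2 = \log R$, consistent with the convention already adopted in this subsection.
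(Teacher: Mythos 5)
Your proposal is correct and follows essentially the same route as the paper: the first bound via the triangle inequality through the intermediate point $\mu_{\dt}(z)$ using Lemma \ref{lem:2b} and Lemma \ref{lem:2} (with the uniform-in-$k$ displacement bound on $\widetilde{E}_k$ and $\nu\leq\alpha\beta$), and the second via the algebraic identity $1-\bar m w-(1-|w|^2)=(\bar w-\bar m)w$ together with the uniform lower bound on $1-|w|^2$. Your explicit attention to the uniformity in $k$ and the restriction to $\Re(z)<t_2$ is if anything slightly more careful than the paper's one-line treatment.
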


\begin{proof}
The first part follows from the triangle inequality, Lemma \ref{lem:2} and Lemma \ref{lem:2b}. For the second part, we use the first part to obtain
\begin{align*}  
\left | \frac{  1 - \overline{ \mu_{\ft}(z) } \mu_{\dt} ( \widetilde{ \psi}_k (z) ) }{1 - | \mu_{\dt} ( \widetilde{ \psi}_k (z) ) |^2 } -1 \right | &=
\left | \frac{  \mu_{\dt} ( \widetilde{ \psi}_k (z) ) [ \overline{ \mu_{\dt} ( \widetilde{ \psi}_k (z) )} - \overline{\mu_{\ft}(z) } ] }{1 - | \mu_{\dt} ( \widetilde{ \psi}_k (z) ) |^2 }  \right | \\
&\leq \left ( \frac{ ||\mu_{\dt} ||_{\infty} }{ 1 - ||\mu_{\dt} ||_{\infty }^2 } \right ) C_5 e^{\nu \Re(z)},
\end{align*}
as required.
\end{proof}

\subsubsection{Complex derivatives of the base case $\widetilde{\psi}_1$}

Our strategy to estimate the complex dilatation of $\widetilde{\psi}_k$ will involve an induction. In this subsection we prove the base case.
Recall that $\widetilde{\psi}_1 = \dt^{-1} \circ \ft$.

\begin{lemma}
\label{lem:3}
With the assumptions of Theorem \ref{thm:1}, there exists a constant $C_7$ such that
\[ | (\widetilde{\psi}_1)_z(z) - 1 | \leq C_7e^{\nu \Re(z)} .\]
\end{lemma}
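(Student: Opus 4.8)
The plan is to compute $(\widetilde{\psi}_1)_z$ directly using the chain rule for complex derivatives applied to $\widetilde{\psi}_1 = \dt^{-1} \circ \ft$, and then to estimate each factor against the ``identity'' value using the preliminary estimates already assembled. Recall the chain rule gives
\[
(\widetilde{\psi}_1)_z(z) = (\dt^{-1})_z(\ft(z)) \, \ft_z(z) + (\dt^{-1})_{\zbar}(\ft(z)) \, \overline{\ft_{\zbar}(z)}.
\]
Using Lemma \ref{lem:00} to rewrite the derivatives of the inverse in terms of $\dt_z$ and $\mu_{\dt}$, evaluated at the point $w = \ft(z)$ with $w = \dt(v)$ for $v = \dt^{-1}(\ft(z)) = \widetilde{\psi}_1(z)$, one obtains
\[
(\widetilde{\psi}_1)_z(z) = \frac{\ft_z(z)}{\dt_z(\widetilde{\psi}_1(z))\,(1-|\mu_{\dt}(\widetilde{\psi}_1(z))|^2)} - \frac{\mu_{\dt}(\widetilde{\psi}_1(z))\,\overline{\ft_{\zbar}(z)}}{\overline{\dt_z(\widetilde{\psi}_1(z))}\,(1-|\mu_{\dt}(\widetilde{\psi}_1(z))|^2)}.
\]
Factoring out $\ft_z(z)/\dt_z(\widetilde{\psi}_1(z))$ from both terms (writing $\overline{\ft_{\zbar}(z)} = \overline{\mu_{\ft}(z)}\,\overline{\ft_z(z)}$ and $\overline{\ft_z(z)}/\overline{\dt_z(\widetilde{\psi}_1(z))} = \overline{\ft_z(z)/\dt_z(\widetilde{\psi}_1(z))} \cdot r$ for a suitable unimodular correction, or more cleanly by grouping the modulus-one phase factors) reduces the expression to the product of three quantities: $\ft_z(z)/\dt_z(\widetilde{\psi}_1(z))$, a factor of the shape $\dt_z(z)/\dt_z(\widetilde{\psi}_1(z))$ times $\dt_z(\widetilde{\psi}_1(z))/\dt_z(z)$ bookkeeping, and the factor $(1 - \overline{\mu_{\ft}(z)}\mu_{\dt}(\widetilde{\psi}_1(z)))/(1-|\mu_{\dt}(\widetilde{\psi}_1(z))|^2)$.

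The key steps, in order, are: first, assemble the identity above and regroup so that $(\widetilde{\psi}_1)_z(z)$ is exhibited as a product $A \cdot B \cdot C$ (up to phase factors of modulus one) where $A = \ft_z(z)/\dt_z(z)$, $B = \dt_z(z)/\dt_z(\widetilde{\psi}_1(z))$, and $C = (1 - \overline{\mu_{\ft}(z)}\mu_{\dt}(\widetilde{\psi}_1(z)))/(1-|\mu_{\dt}(\widetilde{\psi}_1(z))|^2)$; second, apply Lemma \ref{lem:1a} to bound $|A| \leq 1 + C_1 e^{\nu \Re z}$, Lemma \ref{lem:1b} (with $k=1$) to bound $|B| \leq 1 + C_2 e^{\nu \Re z}$, and Corollary \ref{cor:2} (with $k=1$) to bound $|C - 1| \leq C_6 e^{\nu \Re z}$; third, combine these multiplicatively: since $|ABC - 1| \leq (1+C_1 e^{\nu\Re z})(1+C_2 e^{\nu\Re z})(1 + C_6 e^{\nu \Re z}) - 1 \leq C_7 e^{\nu \Re z}$ for $\Re z$ sufficiently negative (absorbing the higher-order terms $e^{2\nu \Re z}, e^{3\nu \Re z}$ into the constant on a half-plane $\Re z < t_2$), the bound $|(\widetilde{\psi}_1)_z(z) - 1| \leq C_7 e^{\nu \Re z}$ follows. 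One should note that $(\widetilde{\psi}_1)_z$ is well defined for a.e.\ $z$ since $\ft$ and $\dt$ are quasiconformal, and that $\widetilde{\psi}_1(z) \to z$ in the relevant half-plane keeps $\widetilde{\psi}_1(z)$ inside the domain where the Hölder estimates apply.

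The main obstacle is purely bookkeeping: tracking the unimodular phase factors $r_{\ft}$ (or equivalently $\overline{\ft_z}/\ft_z$) that appear when one takes conjugates in the chain rule and the inverse formulas, and verifying that after factoring they either cancel or contribute nothing to the modulus, so that the three estimates $A$, $B$, $C$ are genuinely multiplicative in the way claimed. A clean way to sidestep this is to first estimate $|(\widetilde{\psi}_1)_z(z)|$ and argue separately that the phase of $(\widetilde{\psi}_1)_z(z)$ is $O(e^{\nu \Re z})$ close to $0$ — but in fact it is simpler to observe that $(\widetilde{\psi}_1)_z(z) - 1 = (\dt^{-1})_z(\ft(z))\big(\ft_z(z) - \dt_z(\widetilde{\psi}_1(z))\big) + \big[(\dt^{-1})_z(\ft(z))\dt_z(\widetilde{\psi}_1(z)) - 1\big] + (\dt^{-1})_{\zbar}(\ft(z))\overline{\ft_{\zbar}(z)}$, where the middle bracket vanishes identically (it is $(\dt^{-1}\circ\dt)_z - 1$ evaluated correctly) and the remaining two terms are estimated directly: the first via $|(\dt^{-1})_z| \leq L/(1-k^2)$ (from Lemma \ref{lem:00} and Lemma \ref{lem:0}) together with $|\ft_z(z) - \dt_z(\widetilde{\psi}_1(z))| \leq |\ft_z(z) - \dt_z(z)| + |\dt_z(z) - \dt_z(\widetilde{\psi}_1(z))| \leq T_3 e^{\beta'\Re z} + T_2|\widetilde{E}_1(z)|^{\beta} \leq T_3 e^{\beta' \Re z} + T_2 (LT_1)^\beta e^{\alpha\beta \Re z} \leq C e^{\nu \Re z}$ using \eqref{eq:thm1eq3}, \eqref{eq:thm1eq2} and Lemma \ref{lm: E1tilde}; and the last term via $|(\dt^{-1})_{\zbar}| \leq Lk/(1-k^2)$ and $|\ft_{\zbar}(z)| = |\dt_{\zbar}(z) + (\ft_{\zbar}(z) - \dt_{\zbar}(z))| \leq k|\dt_z(z)| + T_3 e^{\beta'\Re z} \leq C$ — wait, that last term is only $O(1)$, not $O(e^{\nu \Re z})$, so this naive splitting fails, confirming that the multiplicative grouping through $\mu$'s really is needed and the cancellation in $C$ (where $\mu_{\ft}$ and $\mu_{\dt}(\widetilde{\psi}_1)$ are close by Corollary \ref{cor:2}) is essential. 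Thus I would commit to the $A\cdot B\cdot C$ factorization, and the one genuine care-point is confirming the phase factors drop out in modulus.
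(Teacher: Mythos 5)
Your plan has the same skeleton as the paper's: apply the Chain Rule, use Lemma~\ref{lem:00} to rewrite the inverse derivatives, and then invoke Lemma~\ref{lem:1a}, Lemma~\ref{lem:1b}, and Corollary~\ref{cor:2}. However, the claimed product factorization $(\widetilde{\psi}_1)_z = A\cdot B\cdot C$ is not exact, and the phase factor you flag as a care-point does \emph{not} drop out in modulus. Setting $\rho := \ft_z(z)/\dt_z(\widetilde{\psi}_1(z))$ and factoring it from both terms gives
\[
(\widetilde{\psi}_1)_z(z) \;=\; \rho \cdot \frac{1 - \overline{\mu_{\ft}(z)}\,\mu_{\dt}(\widetilde{\psi}_1(z))\, r_\rho}{1-|\mu_{\dt}(\widetilde{\psi}_1(z))|^2}, \qquad r_\rho := \overline{\rho}/\rho,
\]
so the third factor is not the quantity that Corollary~\ref{cor:2} controls; since $\overline{\mu_{\ft}}\mu_{\dt}$ is generically non-real, the presence of $r_\rho$ genuinely changes the modulus. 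The gap is small and fixable: because $\rho = 1+\epsilon_1$ with $|\epsilon_1|\le C_8 e^{\nu\Re z}$ by Lemmas~\ref{lem:1a}--\ref{lem:1b}, one has $|1-r_\rho| = 2|\Im\rho|/|\rho| = O(e^{\nu\Re z})$, so the phase is itself $O(e^{\nu\Re z})$-close to $1$ and the desired bound survives after a one-line triangle inequality. The paper avoids the phase altogether by substituting $\rho = 1+\epsilon_1$ into the numerator \emph{before} any factoring, producing an additive decomposition (the Corollary~\ref{cor:2} main term plus an error term that is manifestly $O(|\epsilon_1|)$) rather than your multiplicative one; that is the cleaner bookkeeping. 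One further correction to your diagnostic detour: the ``middle bracket'' in the alternative splitting you consider does not vanish identically --- the identity is $(\dt^{-1})_z(w)\,\dt_z(\dt^{-1}(w)) - 1 = -(\dt^{-1})_{\zbar}(w)\,\overline{\dt_{\zbar}(\dt^{-1}(w))}$, not $0$ --- though your ultimate conclusion that that splitting fails and the cancellation through the $\mu$'s is essential is correct.
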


\begin{proof}
First, by using the Chain Rule and Lemma \ref{lem:00}, we have
\begin{align*}
(\widetilde{\psi}_1)_z(z) &= (\dt^{-1})_z( \dt \circ \widetilde{\psi}_1(z) ) \ft_z(z) + (\dt^{-1})_{\zbar}( \dt \circ \widetilde{\psi}_1(z) ) \overline{\ft_{\zbar}(z) }\\
&= \frac{ \ft_z(z) }{ \dt_z( \widetilde{ \psi}_1 (z) )  ( 1 - |\mu_{\dt}( \widetilde{\psi}_1(z) ) |^2 ) } - \frac{ \mu _{\dt} ( \widetilde{\psi}_1(z)  ) \overline{ f_{\zbar}(z) } }{ \overline{ \dt_z ( \widetilde{ \psi}_1 (z) ) }  ( 1 - |\mu_{\dt}( \widetilde{\psi}_1(z) ) |^2 )} \\
&= \frac{ 1}{  ( 1 - |\mu_{\dt}( \widetilde{\psi}_1(z) ) |^2 ) } \left ( \frac{ \ft_z(z) }{\dt_z( \widetilde{ \psi}_1 (z) )} - \overline{ \mu_{\ft}(z) }  \mu _{\dt} ( \widetilde{\psi}_1(z)  ) \overline{ \left ( \frac{ \ft_z(z) }{\dt_z( \widetilde{ \psi}_1 (z) )} \right) } \right ).
\end{align*}
As
\[ \frac{ \ft_z(z) }{\dt_z( \widetilde{ \psi}_1 (z) )} = \frac{ \ft_z(z) }{ \dt_z(z) } \cdot \frac{ \dt_z(z) }{\dt_z( \widetilde{ \psi}_1 (z) )} ,\]
by Lemma \ref{lem:1a} and Lemma \ref{lem:1b} there exists a constant $C_8$ such that
\[ \frac{ \ft_z(z) }{\dt_z( \widetilde{ \psi}_1 (z) )}  = 1 + \epsilon_1(z),\]
where
\begin{equation}
\label{eq:11} 
|\epsilon_1(z)| \leq C_8 e^{\nu \Re(z) }.
\end{equation}
Therefore,
\begin{align*} 
(\widetilde{\psi}_1)_z(z)  &= \frac{ 1}{  ( 1 - |\mu_{\dt}( \widetilde{\psi}_1(z) ) |^2 ) } \left ( 1 + \epsilon_1(z) -  \overline{ \mu_{\ft}(z) }  \mu _{\dt} ( \widetilde{\psi}_1(z)  ) ( 1+ \overline{\epsilon_1(z) } ) \right ) \\
&=  \frac{  1 - \overline{ \mu_{\ft}(z) } \mu_{\dt} ( \widetilde{ \psi}_1 (z) ) }{1 - | \mu_{\dt} ( \widetilde{ \psi}_1 (z) ) |^2 } + \frac{ \epsilon_1(z) -   \overline{ \mu_{\ft}(z) }  \mu _{\dt} ( \widetilde{\psi}_1(z)  ) \overline{\epsilon_1(z)} }{1 - | \mu_{\dt} ( \widetilde{ \psi}_1 (z) ) |^2 }.
\end{align*}
It follows from Corollary \ref{cor:2} and \eqref{eq:11} that
\begin{align*}
|(\widetilde{\psi}_1)_z(z) - 1 | & \leq \left | \frac{  1 - \overline{ \mu_{\ft}(z) } \mu_{\dt} ( \widetilde{ \psi}_1 (z) ) }{1 - | \mu_{\dt} ( \widetilde{ \psi}_1 (z) ) |^2 } -1 \right | + |\epsilon_1(z)| \cdot \left ( \frac{ 1 + ||\mu_{\ft}||_{\infty} || \mu_{\dt} ||_{\infty} }{1- ||\mu_{\dt}||_{\infty}^2 } \right ) \\
&\leq C_6e^{\nu \Re(z) } + C_8 \left ( \frac{ 1 + ||\mu_{\ft}||_{\infty} || \mu_{\dt} ||_{\infty} }{1- ||\mu_{\dt}||_{\infty}^2 } \right )  e^{\nu\Re(z)}
\end{align*}
as required.
\end{proof}

\begin{lemma}
\label{lem:4}
With the assumptions of Theorem \ref{thm:1}, there exists a constant $C_9$ such that
\[ | (\widetilde{\psi}_1)_{\zbar}(z)  | \leq C_9 e^{\nu \Re(z)} .\]
\end{lemma}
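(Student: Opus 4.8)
The goal is to estimate $(\widetilde{\psi}_1)_{\zbar}(z)$, and the natural strategy is to mirror the computation in Lemma \ref{lem:3} exactly, but now for the $\overline{z}$-derivative. First I would apply the Chain Rule for the $\overline{z}$-derivative of a composition, namely
\[ (\widetilde{\psi}_1)_{\zbar}(z) = (\dt^{-1})_z(\dt\circ \widetilde{\psi}_1(z))\,\ft_{\zbar}(z) + (\dt^{-1})_{\zbar}(\dt\circ\widetilde{\psi}_1(z))\,\overline{\ft_z(z)}, \]
and then substitute the expressions for $(\dt^{-1})_z$ and $(\dt^{-1})_{\zbar}$ from Lemma \ref{lem:00} evaluated at $\dt\circ\widetilde{\psi}_1(z)$. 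Factoring out $1/\bigl(1-|\mu_{\dt}(\widetilde{\psi}_1(z))|^2\bigr)$ and $\ft_z(z)/\dt_z(\widetilde{\psi}_1(z))$, I would rewrite the bracketed term in terms of $\mu_{\ft}(z)$, $\mu_{\dt}(\widetilde{\psi}_1(z))$, and the quantity $\ft_z(z)/\dt_z(\widetilde{\psi}_1(z)) = 1+\epsilon_1(z)$ already controlled by \eqref{eq:11}, so that the bracket takes the shape $\bigl(1+\epsilon_1(z)\bigr)\bigl(\mu_{\ft}(z) - \mu_{\dt}(\widetilde{\psi}_1(z))\bigr)$ up to conjugation bookkeeping.

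The key observation that makes the bound go through is that, unlike in Lemma \ref{lem:3}, the "main term" here is not $1$ but a difference of complex dilatations: after the algebra, $(\widetilde{\psi}_1)_{\zbar}(z)$ is (a bounded factor times) $\mu_{\ft}(z) - \mu_{\dt}(\widetilde{\psi}_1(z))$ plus an $\epsilon_1$-error term. The first of these is $O(e^{\nu\Re(z)})$ by the first estimate of Corollary \ref{cor:2}, and the $\epsilon_1$-error is $O(e^{\nu\Re(z)})$ by \eqref{eq:11}; the bounded factors are controlled using Lemma \ref{lem:0} (to bound $|\dt_z|$ from below and above) and $\|\mu_{\dt}\|_\infty,\|\mu_{\ft}\|_\infty<1$ (to bound $1/(1-|\mu_{\dt}|^2)$). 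Collecting, one gets $|(\widetilde{\psi}_1)_{\zbar}(z)| \leq C_9 e^{\nu\Re(z)}$ for an explicit $C_9$ depending only on $C_5$, $C_8$, $L$, and the dilatation bounds.

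I do not expect a genuine obstacle here; this lemma is the "off-diagonal" companion to Lemma \ref{lem:3} and the real content has already been front-loaded into Corollary \ref{cor:2} and \eqref{eq:11}. The only place requiring care is the conjugation bookkeeping in the Chain Rule — keeping track of which terms are $\overline{\ft_z}$ versus $\ft_{\zbar}$ and ensuring that when one factors out $\ft_z(z)/\dt_z(\widetilde{\psi}_1(z))$ the leftover is genuinely $\mu_{\ft}(z)-\mu_{\dt}(\widetilde{\psi}_1(z))$ rather than some other combination — but this is routine algebra of the type already carried out in Lemma \ref{lem:3}. One should also note that once Lemma \ref{lem:3} and Lemma \ref{lem:4} are both established, they immediately give $|\mu_{\widetilde{\psi}_1}(z)| = |(\widetilde{\psi}_1)_{\zbar}(z)|/|(\widetilde{\psi}_1)_z(z)| \leq C_{10} e^{\nu\Re(z)}$ for $\Re(z)$ sufficiently negative, which is presumably the base case of the induction on $\mu_{\widetilde{\psi}_k}$ that the next subsection carries out.
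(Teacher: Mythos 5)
Your proposal is correct and follows essentially the same route as the paper: apply the Chain Rule, substitute from Lemma \ref{lem:00}, factor out $1/(1-|\mu_{\dt}(\widetilde{\psi}_1(z))|^2)$ and use $\ft_z(z)/\dt_z(\widetilde{\psi}_1(z))=1+\epsilon_1(z)$, then bound the main term $\mu_{\ft}(z)-\mu_{\dt}(\widetilde{\psi}_1(z))$ via Corollary \ref{cor:2} and the $\epsilon_1$-error via \eqref{eq:11}. The only bookkeeping detail (which you flag correctly) is that the paper's algebra produces $(1+\epsilon_1(z))\mu_{\ft}(z)-(1+\overline{\epsilon_1(z)})\mu_{\dt}(\widetilde{\psi}_1(z))$ rather than a clean $(1+\epsilon_1)(\mu_{\ft}-\mu_{\dt})$, but after taking absolute values the estimate is identical.
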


\begin{proof}
Again by the Chain Rule and Lemma \ref{lem:00}, we have
\begin{align*}
(\widetilde{\psi}_1)_{\zbar}(z) &= (\dt^{-1})_z (\dt \circ \widetilde{\psi}_1(z) ) \ft_{\zbar}(z) + (\dt^{-1})_{\zbar}(\dt \circ \widetilde{\psi}_1(z)) \overline{\ft_z(z)} \\
&=   \frac{ \ft_{\zbar}(z) }{ \dt_z( \widetilde{ \psi}_1 (z) )  ( 1 - |\mu_{\dt}( \widetilde{\psi}_1(z) ) |^2 ) } - \frac{ \mu _{\dt} ( \widetilde{\psi}_1(z)  ) \overline{ f_{z}(z) } }{ \overline{ \dt_z ( \widetilde{ \psi}_1 (z) ) }  ( 1 - |\mu_{\dt}( \widetilde{\psi}_1(z) ) |^2 )}\\
&= \frac{ 1}{  ( 1 - |\mu_{\dt}( \widetilde{\psi}_1(z) ) |^2 ) } \left (  \frac{ \ft_z(z) \mu_{\ft}(z) }{ \dt_z( \widetilde{ \psi}_1 (z) ) } -  \frac{ \mu _{\dt} ( \widetilde{\psi}_1(z)  ) \overline{ f_{z}(z) } }{ \overline{ \dt_z ( \widetilde{ \psi}_1 (z) ) } } \right ) \\
&= \frac{ 1}{  ( 1 - |\mu_{\dt}( \widetilde{\psi}_1(z) ) |^2 ) } \left ( (1+\epsilon_1(z) ) \mu_{\ft}(z) - (1+\overline{\epsilon_1(z) } )\mu_{\dt} (\widetilde{\psi}_1(z) ) \right ).
\end{align*}
By \eqref{eq:11} and Corollary \ref{cor:2}, we conclude that
\begin{align*} 
|(\widetilde{\psi}_1)_{\zbar}(z)| &\leq \frac{ | \mu_{\ft}(z) - \mu_{\dt}( \widetilde{\psi}_1(z)) | }{ 1 - |\mu_{\dt}( \widetilde{\psi}_1(z) ) |^2 }
+ |\epsilon_1(z)| \cdot \left (  \frac{ | \mu_{\ft}(z)| + |\mu_{\dt}( \widetilde{\psi}_1(z)) | }{  1 - |\mu_{\dt}( \widetilde{\psi}_1(z) ) |^2 } \right ) \\
&\leq \frac{ C_5 e^{\nu \Re(z)} }{1 - ||\mu_{\dt} ||_{\infty}^2} + C_8 \left ( \frac{ ||\mu_{\ft}||_{\infty} + ||\mu_{\dt}||_{\infty} } {1 - ||\mu_{\dt} ||_{\infty}^2}  \right )e^{\nu \Re(z)}
\end{align*}
as required.
\end{proof}

Denote by $\mu_1$ the complex dilatation of $\widetilde{\psi}_1$.

\begin{corollary}
\label{cor:1}
There exists a constant $C_{10}$ such that
\[ |\mu_1(z)| \leq C_{10}e^{\nu\Re(z)} \]
\end{corollary}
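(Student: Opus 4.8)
The plan is to derive the bound on $|\mu_1(z)|$ directly from Lemma \ref{lem:3} and Lemma \ref{lem:4}. By definition $\mu_1(z) = (\widetilde{\psi}_1)_{\zbar}(z) / (\widetilde{\psi}_1)_z(z)$, so I need to control the numerator from above and the denominator from below. Lemma \ref{lem:4} gives $|(\widetilde{\psi}_1)_{\zbar}(z)| \leq C_9 e^{\nu \Re(z)}$ immediately. For the denominator, Lemma \ref{lem:3} gives $|(\widetilde{\psi}_1)_z(z) - 1| \leq C_7 e^{\nu \Re(z)}$, so by the reverse triangle inequality $|(\widetilde{\psi}_1)_z(z)| \geq 1 - C_7 e^{\nu \Re(z)}$.

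First I would fix $t_3 \leq \log R$ small enough (i.e. with $\Re(z) < t_3$ forcing $e^{\nu \Re(z)}$ sufficiently small) that $C_7 e^{\nu \Re(z)} \leq 1/2$, hence $|(\widetilde{\psi}_1)_z(z)| \geq 1/2$ on that half-plane. Then
\[
|\mu_1(z)| = \frac{|(\widetilde{\psi}_1)_{\zbar}(z)|}{|(\widetilde{\psi}_1)_z(z)|} \leq \frac{C_9 e^{\nu \Re(z)}}{1/2} = 2C_9 e^{\nu \Re(z)},
\]
so the claim holds with $C_{10} = 2C_9$ for $\Re(z) < t_3$. Since the statement is asymptotic in nature (the interesting regime is $\Re(z) \to -\infty$, matching the pattern of all the preceding lemmas which are implicitly stated for $\Re(z)$ small), restricting to such a half-plane is consistent with the setup; alternatively one absorbs the behaviour on the compact complementary strip into the constant using that $\widetilde{\psi}_1$ is quasiconformal with $\|\mu_1\|_\infty < 1$ there.

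I do not anticipate a genuine obstacle here: this corollary is a one-line consequence of the two preceding lemmas, and the only minor point to be careful about is ensuring the denominator is bounded away from zero, which is why the restriction to a sufficiently small half-plane (or an appeal to the uniform bound $\|\mu_1\|_\infty \le (K-1)/(K+1) < 1$ from quasiconformality on the remaining strip) is needed. The exponent $\nu$ and the exponential form $e^{\nu \Re(z)}$ are inherited verbatim from Lemma \ref{lem:3} and Lemma \ref{lem:4}, so no new estimate is introduced.
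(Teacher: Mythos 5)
Your proof is correct and follows the same route as the paper: bound the numerator by Lemma \ref{lem:4}, bound the denominator below via Lemma \ref{lem:3} and the reverse triangle inequality, and shrink the half-plane so the denominator stays away from zero. Your extra care about where the denominator is bounded below is a point the paper leaves implicit (and incidentally the paper's displayed chain transposes the roles of $C_7$ and $C_9$ relative to the lemma statements, which your version gets right).
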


\begin{proof}
By Lemma \ref{lem:3} and Lemma \ref{lem:4}, we have
\begin{align*}
| \mu_1(z)| &= \frac{ |(\widetilde{\psi}_1)_{\zbar}(z) |}{|(\widetilde{\psi}_1)_z(z) | } \\
&\leq  \frac{ C_7e^{\nu\Re(z)}  }{ 1 - C_9e^{\nu\Re(z)} } \\
&\leq C_{10}e^{\nu\Re(z)} 
\end{align*}
as required.
\end{proof}

\subsubsection{The inductive step}

In this subsection, we will show that bounds on the complex derivatives on $\widetilde{\psi}_k$ propagate to $\widetilde{\psi}_{k+1}$.
Recall from \eqref{eq:thm1eq4} that
\begin{equation}
\label{eq:6}
\lambda < K(\dt)^{-1/\nu} = \left ( \frac{ 1 - ||\mu_{\dt}||_{\infty}}{1 + ||\mu_{\dt} ||_{\infty} } \right ) ^{1/\nu}.
\end{equation}
Fix $\epsilon > 0$ such that
\begin{equation}
\label{eq:7}
\lambda^{\nu} K(\dt) < 1- \epsilon.
\end{equation}

\begin{lemma}
\label{lem:7}
We may choose $R<1$ such that for all $k \in \N$ and $\Re(z) < \log R$ we have
\[ \left ( 1 + C_8e^{\nu \log R} \right ) \cdot \left ( \frac{ 1 + |\mu_{\ft}(z)| }{ 1 + |\mu_{\dt}( \widetilde{\psi}_k(z) )| } \right ) < 1 + \frac{ \epsilon}{2}. \]
\end{lemma}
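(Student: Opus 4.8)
The plan is to observe that every error term entering this inequality is of order $e^{\nu\Re z}$, hence uniformly small once $\Re z$ is sufficiently negative, and that the bound controlling the $\mu_{\dt}(\widetilde{\psi}_k(z))$ term is already uniform in $k$ by Corollary \ref{cor:2}. So a single choice of a small $R$ will make the asserted inequality hold simultaneously for all $k\in\N$.

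First I would estimate the quotient. By Corollary \ref{cor:2}, for every $k\in\N$ and every $z$ with $\Re z<\log R$,
\[
\bigl|\,|\mu_{\ft}(z)|-|\mu_{\dt}(\widetilde{\psi}_k(z))|\,\bigr|\leq |\mu_{\ft}(z)-\mu_{\dt}(\widetilde{\psi}_k(z))|\leq C_5 e^{\nu\Re z},
\]
so, since the denominator below is at least $1$,
\[
\frac{1+|\mu_{\ft}(z)|}{1+|\mu_{\dt}(\widetilde{\psi}_k(z))|}=1+\frac{|\mu_{\ft}(z)|-|\mu_{\dt}(\widetilde{\psi}_k(z))|}{1+|\mu_{\dt}(\widetilde{\psi}_k(z))|}\leq 1+C_5 e^{\nu\Re z}.
\]
As $\nu>0$ and $\Re z<\log R$ we have $e^{\nu\Re z}<e^{\nu\log R}=R^{\nu}$, so the left-hand side of the claimed inequality is bounded by
\[
\bigl(1+C_8 R^{\nu}\bigr)\bigl(1+C_5 R^{\nu}\bigr)=1+(C_5+C_8)R^{\nu}+C_5 C_8 R^{2\nu}.
\]

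It then remains only to choose $R$. Since $C_5$ and $C_8$ depend only on the data $T_1,T_2,T_3,L$ and not on $k$, the quantity $(C_5+C_8)R^{\nu}+C_5 C_8 R^{2\nu}$ tends to $0$ as $R\to 0^{+}$; hence there is $R_0\in(0,1)$ with $(C_5+C_8)R^{\nu}+C_5 C_8 R^{2\nu}<\epsilon/2$ whenever $0<R\leq R_0$. We take $R$ to be any such value, shrinking it further if necessary so that $R<1$ and so that all of the earlier estimates of this section are in force on $\{\Re z<\log R\}$ — in particular so that $|\widetilde{E}_k(z)|<r$ for all $k$ (cf.\ Proposition \ref{prop: uniform bound}), which is what makes Lemma \ref{lem:2}, and therefore Corollary \ref{cor:2}, applicable at $\widetilde{\psi}_k(z)$. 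This yields the assertion.

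The argument has no genuine obstacle: it is a limit computation in $R$. The only points needing care are that the bound in Corollary \ref{cor:2} must be uniform in $k$ (which is exactly how that corollary is stated) and that the $R$ chosen here be taken as the minimum of this threshold and those appearing in Proposition \ref{prop: uniform bound} and the preliminary estimates, so that all invoked results remain valid.
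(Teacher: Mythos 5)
Your proof is correct and follows essentially the same route as the paper: both bound the quotient by $1+C_5e^{\nu\Re(z)}$ via Corollary \ref{cor:2} and then take $R$ small enough that the product $(1+C_8R^{\nu})(1+C_5R^{\nu})$ falls below $1+\epsilon/2$. Your version merely spells out the choice of $R$ and the uniformity in $k$ more explicitly than the paper does.
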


\begin{proof}
By Corollary \ref{cor:2}
\begin{align*}
 \frac{ 1 + |\mu_{\ft}(z) |}{1 + |\mu_{\dt}(\widetilde{\psi}_k(z) )|  }  &\leq \left |\frac{ 1 + |\mu_{\dt}(\widetilde{\psi}_k(z))| + |\mu_{\ft}(z)| -  |\mu_{\dt}(\widetilde{\psi}_k(z))| }{1 + |\mu_{\dt}(\widetilde{\psi}_k(z))|} \right | \\
&\leq 1 +  C_5 e^{\nu \Re(z) } 
\end{align*}
from which the lemma follows.
\end{proof}

Next choose a constant $\ct$ such that
\begin{equation}
\label{eq:8}
\ct > \frac{ 2\max\{ C_7,C_9\}  }{\epsilon }
\end{equation}
so that for some $k\in \N$ we have
\begin{equation}
\label{eq:5}
\max \{ | (\widetilde{\psi}_k)_z(z)-1|,  | (\widetilde{\psi}_k)_{\zbar}(z)|  \} \leq \ct e^{\nu\Re(z)} .
\end{equation}

First, we deal with the $z$-derivative of $\widetilde{\psi}_{k+1}$.

\begin{lemma}
\label{lem:5}
With the assumptions above, we have
\[ | (\widetilde{\psi}_{k+1})_z(z)-1| \leq \ct e^{\nu\Re(z)} .\]
\end{lemma}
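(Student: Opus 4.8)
The plan is to estimate $(\widetilde{\psi}_{k+1})_z$ by the same Chain Rule computation used in Lemma \ref{lem:3}, but now with $\widetilde{\psi}_k$ in place of $\ft$ as the "inner" map and $\widetilde{\psi}_k\circ\ft$ replacing $\ft$. Recall $\widetilde{\psi}_{k+1} = \dt^{-1}\circ\widetilde{\psi}_k\circ\ft$. Applying the Chain Rule and Lemma \ref{lem:00} at the point $\dt\circ\widetilde{\psi}_{k+1}(z) = \widetilde{\psi}_k(\ft(z))$, I would write
\[
(\widetilde{\psi}_{k+1})_z(z) = \frac{1}{1-|\mu_{\dt}(\widetilde{\psi}_{k+1}(z))|^2}\left( \frac{(\widetilde{\psi}_k\circ\ft)_z(z)}{\dt_z(\widetilde{\psi}_{k+1}(z))} - \overline{\mu_{\widetilde{\psi}_k\circ\ft}(z)}\,\mu_{\dt}(\widetilde{\psi}_{k+1}(z))\,\overline{\left(\frac{(\widetilde{\psi}_k\circ\ft)_z(z)}{\dt_z(\widetilde{\psi}_{k+1}(z))}\right)}\right),
\]
exactly as in Lemma \ref{lem:3}. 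The key is then to control the ratio $R(z):=(\widetilde{\psi}_k\circ\ft)_z(z)/\dt_z(\widetilde{\psi}_{k+1}(z))$. I would split it as
\[
R(z) = \frac{(\widetilde{\psi}_k)_z(\ft(z))\,\ft_z(z) + (\widetilde{\psi}_k)_{\zbar}(\ft(z))\,\overline{\ft_{\zbar}(z)}}{\dt_z(\widetilde{\psi}_k(\ft(z)))} \cdot \frac{\dt_z(\widetilde{\psi}_k(\ft(z)))}{\dt_z(\widetilde{\psi}_{k+1}(z))},
\]
and use the inductive hypothesis \eqref{eq:5} to write $(\widetilde{\psi}_k)_z(\ft(z)) = 1 + O(\ct e^{\nu\Re\ft(z)})$ and $(\widetilde{\psi}_k)_{\zbar}(\ft(z)) = O(\ct e^{\nu\Re\ft(z)})$. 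Crucially, since $0$ is geometrically attracting, \eqref{eq: f_n^k bound} gives $\Re\ft(z) \le \log\lambda + \Re(z)$, so $e^{\nu\Re\ft(z)} \le \lambda^{\nu} e^{\nu\Re(z)}$; this is where the contraction factor $\lambda^{\nu}$ appears. Combined with Lemma \ref{lem:1a} (controlling $\ft_z/\dt_z$), a version of Lemma \ref{lem:1b} applied at $\widetilde{\psi}_k(\ft(z))$ versus $\widetilde{\psi}_{k+1}(z)$ (these differ by $\widetilde{E}(z) = O(e^{\alpha\Re z})$ using \eqref{eq:thm1eq1} and Lemma \ref{lem:2}), and Corollary \ref{cor:2}, I get $R(z) = 1 + \epsilon_{k+1}(z)$ with $|\epsilon_{k+1}(z)|$ bounded by a sum of terms each of which is $O(e^{\nu\Re z})$, the dominant one being $\ct\lambda^{\nu}K(\dt)\cdot$(something close to $1$)$\cdot e^{\nu\Re z}$ carried over from the induction.

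Assembling these, the bound on $|(\widetilde{\psi}_{k+1})_z(z) - 1|$ becomes, as in Lemma \ref{lem:3}, at most $|\epsilon_{k+1}(z)|$ times a factor close to $1/(1-\|\mu_{\dt}\|_\infty^2)$ plus a $C_6 e^{\nu\Re z}$ term from Corollary \ref{cor:2}. The point of the constants \eqref{eq:7} and \eqref{eq:8} is exactly to absorb these: the inductive term contributes at most roughly $\ct(1-\epsilon)e^{\nu\Re z}$ (using $\lambda^{\nu}K(\dt) < 1-\epsilon$ and Lemma \ref{lem:7} to handle the $(1+|\mu_{\ft}|)/(1+|\mu_{\dt}|)$-type factor so the effective multiplier stays below $1-\epsilon/2$ times something), while the remaining "new" error terms (from $C_6$, from the $\widetilde{E}$ discrepancy, from Lemma \ref{lem:1a}) are all independent of $\ct$ and hence bounded by $\max\{C_7,C_9\}e^{\nu\Re z} \le (\epsilon/2)\ct e^{\nu\Re z}$ by the choice \eqref{eq:8}, possibly after shrinking $R$ as in Lemma \ref{lem:7}. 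Summing, $|(\widetilde{\psi}_{k+1})_z(z)-1| \le \bigl((1-\tfrac{\epsilon}{2})\ct + \tfrac{\epsilon}{2}\ct\bigr)e^{\nu\Re z} \le \ct e^{\nu\Re z}$, closing the induction.

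The main obstacle I anticipate is bookkeeping: carefully tracking that every "new" error term genuinely does not depend on $\ct$ (so that it can be dominated via \eqref{eq:8} by an $\epsilon\ct$-sized quantity) while the $\ct$-dependent term genuinely carries the contraction factor $\lambda^{\nu}K(\dt) < 1-\epsilon$ and nothing worse. In particular one must be careful that the ratio $\dt_z(\widetilde{\psi}_k(\ft(z)))/\dt_z(\widetilde{\psi}_{k+1}(z))$ is handled correctly — the two arguments $\widetilde{\psi}_k(\ft(z))$ and $\widetilde{\psi}_{k+1}(z) = \dt^{-1}(\widetilde{\psi}_k(\ft(z)))$ differ, but by $\widetilde{E}_1$-type quantities controlled by Proposition \ref{prop: uniform bound} and \eqref{eq:thm1eq1}, and by Lemma \ref{lem:2} their $\mu_{\dt}$-values differ by $O(e^{\alpha\beta\Re z}) = O(e^{\nu\Re z})$ since $\nu \le \alpha\beta$. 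A secondary subtlety is that the factor $1/(1-|\mu_{\dt}(\widetilde{\psi}_{k+1}(z))|^2)$ and the $\mu_{\widetilde{\psi}_k\circ\ft}$ appearing in the formula must be shown uniformly bounded away from the degenerate regime; this follows because $\widetilde{\psi}_k\circ\ft$ and $\dt$ are all $K$-quasiconformal for a uniform $K$ (indeed $\|\mu_{\dt}\|_\infty < 1$ is fixed and the $\widetilde{\psi}_k$ have uniformly small dilatation near the fixed point by the inductive hypothesis), so all these normalizing factors lie in a fixed compact range and get swept into the constants $C_i$.
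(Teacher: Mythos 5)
Your plan is correct and is essentially the route the paper takes: Chain Rule plus Lemma \ref{lem:00} to express $(\widetilde{\psi}_{k+1})_z$, the inductive bound \eqref{eq:5} on $(\widetilde{\psi}_k)_z-1$ and $(\widetilde{\psi}_k)_{\zbar}$ evaluated at $\ft(z)$ so that the contraction $\Re\ft(z)\le\log\lambda+\Re(z)$ supplies the factor $\lambda^{\nu}$, Lemma \ref{lem:1a}, Lemma \ref{lem:1b}, Corollary \ref{cor:2}, and the $\epsilon$-bookkeeping via \eqref{eq:7}, \eqref{eq:8} and Lemma \ref{lem:7}. The paper's only organizational difference is that it writes $(\widetilde{\psi}_k)_z=1+p_k$, $(\widetilde{\psi}_k)_{\zbar}=q_k$ and substitutes these into the Chain Rule for $(\widetilde{\psi}_k\circ\ft)_z$ and $(\widetilde{\psi}_k\circ\ft)_{\zbar}$ \emph{before} collecting terms, so that the expression splits into a $p_k,q_k$-free first term (bounded exactly as in Lemma \ref{lem:3} by $C_7e^{\nu\Re(z)}$) plus two terms each explicitly proportional to $p_k(\ft(z))$ or $q_k(\ft(z))$; you instead keep $R(z)$ and $\mu_{\widetilde{\psi}_k\circ\ft}$ intact.

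One small caution about your bookkeeping paragraph: it is not quite right that all the ``new'' error terms are independent of $\ct$. In your formulation the factor $\overline{\mu_{\widetilde{\psi}_k\circ\ft}(z)}$ multiplying $\mu_{\dt}(\widetilde{\psi}_{k+1}(z))$ in the leading term depends on $\mu_{\widetilde{\psi}_k}(\ft(z))$, hence on $\ct$, so the deviation of that leading term from $1$ has a $\ct$-dependent piece beyond what Corollary \ref{cor:2} alone covers. This piece does carry the factor $\lambda^{\nu}$ (since $|\mu_{\widetilde{\psi}_k}(\ft(z))|\lesssim\ct\lambda^{\nu}e^{\nu\Re(z)}$), and if you track it carefully the cross-terms cancel and you land on the paper's bound
\[
\Bigl(C_7+\frac{\ct\lambda^{\nu}(1+C_8e^{\nu\Re(z)})(1+|\mu_{\ft}(z)|)(1+|\mu_{\dt}(\widetilde{\psi}_{k+1}(z))|)}{1-|\mu_{\dt}(\widetilde{\psi}_{k+1}(z))|^2}\Bigr)e^{\nu\Re(z)},
\]
but it must be grouped with the inductive contribution, not the $C_7$-type ``new'' one. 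The paper's explicit $p_k,q_k$ substitution sidesteps this subtlety entirely, since $\mu_{\ft}$ (not $\mu_{\widetilde{\psi}_k\circ\ft}$) is what appears in its first term, and that is $\ct$-free.
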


\begin{proof}
By the Chain Rule, we have
\begin{align*}
(\widetilde{\psi}_{k+1})_z(z) &= (\dt^{-1} \circ \widetilde{\psi}_k \circ \ft )_z(z) \\
&= (\dt^{-1})_z(\widetilde{\psi}_k(\ft(z)) (\widetilde{\psi}_k \circ \ft )_z(z) + (\dt^{-1})_{\zbar}(\widetilde{\psi}_k(\ft(z)) \overline{ (\widetilde{\psi}_k \circ \ft)_{\zbar}(z) }\\
&= \frac{ (\widetilde{\psi}_k \circ \ft )_z(z) }{ \dt_z( \widetilde{\psi}_{k+1} (z)) (1 - | \mu_{\dt}( \widetilde{\psi}_{k+1}(z) ) |^2 ) }
- \frac{  \mu_{\dt}( \widetilde{\psi}_{k+1}(z) ) \overline{ (\widetilde{\psi}_k \circ \ft )_{\zbar}(z) } }{ \overline{ \dt_z ( \widetilde{\psi}_{k+1 } (z)) }  (1 - | \mu_{\dt}( \widetilde{\psi}_{k+1}(z) ) |^2 ) }
\end{align*}
For convenience, write $(\widetilde{\psi}_k)_z (z) = 1 + p_k(z)$ and $(\widetilde{\psi}_k)_{\zbar}(z) = q_k(z)$, with bounds on $|p_k|,|q_k|$ from \eqref{eq:5}. Then by the Chain Rule
\[ (\widetilde{\psi}_k \circ \ft )_z(z) = (1+p_k(\ft(z))) \ft_z(z) + q_k(\ft(z)) \overline{\ft_{\zbar}(z) }\]
and 
\[ (\widetilde{\psi}_k \circ \ft )_{\zbar}(z) = (1+p_k(\ft(z))) \ft_{\zbar}(z) + q_k(\ft(z)) \overline{ \ft_z(z) }.\]
Putting this all together, we obtain
\begin{align*} 
(\widetilde{\psi}_{k+1})_z(z) &=  \left ( \frac{ (1+p_k(\ft(z))) \ft_z(z) + q_k(\ft(z)) \overline{\ft_{\zbar}(z) } }{\dt_z( \widetilde{\psi}_{k+1} (z)) (1 - | \mu_{\dt}( \widetilde{\psi}_{k+1}(z) ) |^2 )} - \frac{ \mu_{\dt}( \widetilde{\psi}_{k+1}(z) )[( 1+ \overline{p_k(\ft(z)) } )\overline{\ft_{\zbar}(z)} + \overline{q_k(\ft(z))} \ft_z(z) ]  }{ \overline{ \dt_z ( \widetilde{\psi}_{k+1}  (z)) }(1 - | \mu_{\dt}( \widetilde{\psi}_{k+1}(z) ) |^2 )} \right )\\
&= \frac{1}{ (1 - | \mu_{\dt}( \widetilde{\psi}_{k+1}(z) ) |^2 ) } \left ( \frac{ \ft_z(z) }{\dt_z( \widetilde{\psi}_{k+1 }(z)) } - \frac {\mu_{\dt}( \widetilde{\psi}_{k+1}(z)) \overline{ \ft_{\zbar}(z) } } { \overline{ \dt_z( \widetilde{ \psi}_{k+1} (z)) } } \right )\\
&+ \frac{1}{ (1 - | \mu_{\dt}( \widetilde{\psi}_{k+1}(z) ) |^2 ) } \left (   \frac{p_k(\ft(z)) \ft_z(z)}{\dt_z( \widetilde{\psi}_{k+1 }(z)) } -  \frac { \overline{ p_k(\ft(z)) } \mu_{\dt}( \widetilde{\psi}_{k+1}(z)) \overline{\mu_{\ft}(z) }\overline{ \ft_z(z) }} { \overline{ \dt_z( \widetilde{ \psi}_{k+1} (z)) } }   \right )\\
&+ \frac{1}{ (1 - | \mu_{\dt}( \widetilde{\psi}_{k+1}(z) ) |^2 ) } \left ( \frac{ q_k(\ft(z)) \overline{\mu_{\ft} (z) } r_{\ft}(z) \ft_z(z) }{\dt_z( \widetilde{\psi}_{k+1}(z) ) } -  \frac{ \overline{q_k(\ft(z))} \mu_{\dt}( \widetilde{\psi}_{k+1} (z)) \overline{r_{\ft}(z)} \overline{ \ft_z(z) } }{\overline{\dt_z(\widetilde{\psi}_{k+1} (z)) } }  \right )
\end{align*}
Using the same idea as in the proof of Lemma \ref{lem:3}, the first term here is within $C_7 e^{\nu \Re(z) }$ of $1$. Next, we will repeatedly use the fact that via Lemma \ref{lem:1a} and Lemma \ref{lem:1b} we may write
\[ \frac{ \ft_z(z) }{ \dt_z( \widetilde{\psi_{k+1}} (z) ) } = 1+\epsilon_{k+1}(z), \quad  |\epsilon_{k+1}| \leq C_8 e^{\nu \Re(z)}\]
uniformly in $k$.
Next, by \eqref{eq: f_n^k bound} and \eqref{eq:5}, 
\begin{align*} 
\max \{ |p_k(\ft(z))| , |q_k(\ft (z) )| \} &\leq \ct e^{\nu \Re \ft(z) }\\
&\leq \ct \lambda^{\nu} e^{\nu \Re(z)} .
\end{align*}
Then the absolute value of the second term above is bounded above by
\[ \frac{ \ct \lambda^{\nu} e^{\nu \Re(z) } (1+C_8e^{\nu \Re(z) }) (1+|\mu_{\dt}( \widetilde{\psi}_{k+1}(z) )| |\mu_{\ft}( z )| )}{1- |\mu_{\dt}( \widetilde{\psi}_{k+1}(z) )|^2 }\]
and the absolute value of the third term is bounded above by
\[ \frac{ \ct \lambda^{\nu} e^{\nu \Re(z) } (1+C_8e^{\nu \Re(z) }) (|\mu_{\dt}( \widetilde{\psi}_{k+1}(z) )|+ |\mu_{\ft}(z )|)}{1- |\mu_{\dt}( \widetilde{\psi}_{k+1}(z) )|^2 }.\]

Finally, by Lemma \ref{lem:7}, \eqref{eq:7} and \eqref{eq:8} we have
\begin{align*}
| (\widetilde{\psi}_{k+1})_z(z)-1 | &\leq \left ( C_7 + \frac{ \ct \lambda^{\nu} (1+C_8 e^{\nu \Re(z)} ) ( 1+ |\mu_{\ft}( z)| )(1+|\mu_{\dt}( \widetilde{\psi}_{k+1}(z) )| )  }{1-|\mu_{\dt}( \widetilde{\psi}_{k+1}(z) )|^2} \right) e^{\nu \Re(z)} \\
&\leq \left ( \frac{\epsilon}{2} + \frac{(1 - \epsilon)}{K(\dt)} (1+C_8 e^{\nu M} ) \left ( \frac{ 1 + |\mu_{\ft}(z)| }{1- |\mu_{\dt}(\widetilde{\psi}_{k+1}(z))| } \right ) \right ) \ct e^{\nu \Re(z)}\\
&\leq \left ( \frac{ \epsilon}{2} + (1-\epsilon)(1+C_8 e^{\nu M} ) \left (  \frac{ 1 + |\mu_{\ft}(z) |}{1 + |\mu_{\dt}(\widetilde{\psi}_k(z) )|  } \right ) \right ) \ct e^{\nu \Re(z)}\\
&\leq \left ( \frac{\epsilon}{2} + (1-\epsilon) (1+\epsilon/2) \right ) \ct e^{\nu \Re(z)}\\
&= \left ( 1 - \frac{\epsilon^2}{2} \right ) \ct e^{\nu \Re(z) }\\
&< \ct e^{\nu \Re(z)}
\end{align*}
as required.
\end{proof}

Second, we deal with the $\zbar$-derivative of $\widetilde{\psi}_{k+1}$.

\begin{lemma}
\label{lem:6}
With the assumptions above, we have 
\[ | ( \widetilde{\psi}_{k+1} )_{\zbar} (z) | \leq \ct e^{\nu \Re(z) } .\]
\end{lemma}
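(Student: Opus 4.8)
The plan is to follow the proof of Lemma \ref{lem:5} almost verbatim, interchanging the roles of the $z$- and $\zbar$-derivatives in the leading part of the calculation. First I would apply the Chain Rule to $\widetilde{\psi}_{k+1} = \dt^{-1}\circ\widetilde{\psi}_k\circ\ft$ and use $\dt(\widetilde{\psi}_{k+1}(z)) = \widetilde{\psi}_k(\ft(z))$ to write
\[ (\widetilde{\psi}_{k+1})_{\zbar}(z) = (\dt^{-1})_z(\dt(\widetilde{\psi}_{k+1}(z)))\,(\widetilde{\psi}_k\circ\ft)_{\zbar}(z) + (\dt^{-1})_{\zbar}(\dt(\widetilde{\psi}_{k+1}(z)))\,\overline{(\widetilde{\psi}_k\circ\ft)_z(z)}. \]
Substituting the formulas from Lemma \ref{lem:00} for $(\dt^{-1})_z$ and $(\dt^{-1})_{\zbar}$, and then inserting $(\widetilde{\psi}_k)_z = 1+p_k$, $(\widetilde{\psi}_k)_{\zbar} = q_k$ together with the Chain Rule for $(\widetilde{\psi}_k\circ\ft)_z$ and $(\widetilde{\psi}_k\circ\ft)_{\zbar}$, exactly as in Lemma \ref{lem:5}, I would arrange the result into a ``main part'' consisting of the terms with $p_k = q_k = 0$ and an ``error part'' consisting of the four terms carrying a factor $p_k(\ft(z))$ or $q_k(\ft(z))$.

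For the main part, after writing $\ft_{\zbar}(z) = \mu_{\ft}(z)\ft_z(z)$ and invoking the uniform estimate $\ft_z(z)/\dt_z(\widetilde{\psi}_{k+1}(z)) = 1+\epsilon_{k+1}(z)$ with $|\epsilon_{k+1}(z)| \leq C_8 e^{\nu\Re(z)}$ (from Lemma \ref{lem:1a} and Lemma \ref{lem:1b}, uniformly in $k$), the main part becomes precisely the expression treated in the proof of Lemma \ref{lem:4}, namely
\[ \frac{\mu_{\ft}(z) - \mu_{\dt}(\widetilde{\psi}_{k+1}(z))}{1-|\mu_{\dt}(\widetilde{\psi}_{k+1}(z))|^2} + \frac{\mu_{\ft}(z)\epsilon_{k+1}(z) - \mu_{\dt}(\widetilde{\psi}_{k+1}(z))\,\overline{\epsilon_{k+1}(z)}}{1-|\mu_{\dt}(\widetilde{\psi}_{k+1}(z))|^2}. \]
By the first part of Corollary \ref{cor:2} (stated for all $k$) and the bound on $\epsilon_{k+1}$, this has modulus at most $C_9 e^{\nu\Re(z)}$, and hence at most $\tfrac{\epsilon}{2}\ct e^{\nu\Re(z)}$ by the choice of $\ct$ in \eqref{eq:8}.

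For the error part, I would estimate each of the four terms using $|r_{\ft}(z)| = 1$, $\ft_{\zbar}(z) = \mu_{\ft}(z)\ft_z(z)$ and $\overline{\ft_z(z)} = r_{\ft}(z)\ft_z(z)$, replacing $\ft_z(z)/\dt_z(\widetilde{\psi}_{k+1}(z))$ and $\overline{\ft_z(z)}/\overline{\dt_z(\widetilde{\psi}_{k+1}(z))}$ by $1+\epsilon_{k+1}(z)$ and its conjugate. Combined with the inductive bound $\max\{|p_k(\ft(z))|,|q_k(\ft(z))|\} \leq \ct\lambda^{\nu}e^{\nu\Re(z)}$ (from \eqref{eq: f_n^k bound} and \eqref{eq:5}), the four terms collapse --- just as in Lemma \ref{lem:5} --- to
\[ (1+C_8 e^{\nu\Re(z)})\,\ct\,\lambda^{\nu} e^{\nu\Re(z)}\cdot\frac{1+|\mu_{\ft}(z)|}{1-|\mu_{\dt}(\widetilde{\psi}_{k+1}(z))|}, \]
the factor $(1+|\mu_{\ft}(z)|)(1+|\mu_{\dt}(\widetilde{\psi}_{k+1}(z))|)$ in the numerator cancelling $1-|\mu_{\dt}(\widetilde{\psi}_{k+1}(z))|^2$ in the denominator. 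Then, exactly as in Lemma \ref{lem:5}, I would use $\lambda^{\nu}K(\dt) < 1-\epsilon$ from \eqref{eq:7}, the identity $K(\dt) = (1+\|\mu_{\dt}\|_{\infty})/(1-\|\mu_{\dt}\|_{\infty})$, and Lemma \ref{lem:7} to bound this by $(1-\epsilon)(1+\tfrac{\epsilon}{2})\ct e^{\nu\Re(z)}$. Adding the two contributions gives
\[ |(\widetilde{\psi}_{k+1})_{\zbar}(z)| \leq \left(\tfrac{\epsilon}{2} + (1-\epsilon)\left(1+\tfrac{\epsilon}{2}\right)\right)\ct e^{\nu\Re(z)} = \left(1-\tfrac{\epsilon^2}{2}\right)\ct e^{\nu\Re(z)} < \ct e^{\nu\Re(z)}, \]
as required.

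The only real difficulty is organizational: when passing from the $z$-derivative to the $\zbar$-derivative, one has to keep careful track of which terms acquire a conjugation and a unimodular factor $r_{\ft}$, so that the four error terms genuinely recombine into the same clean quantity $\ct\lambda^{\nu}e^{\nu\Re(z)}(1+|\mu_{\ft}(z)|)/(1-|\mu_{\dt}(\widetilde{\psi}_{k+1}(z))|)$ that appeared in Lemma \ref{lem:5}. Once this bookkeeping is checked, the remaining estimates are a verbatim repetition of those already carried out there.
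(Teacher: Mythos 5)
Your proposal is correct and follows essentially the same route as the paper's proof: expand by the Chain Rule, split off the $p_k$- and $q_k$-free ``main part'' (bounded exactly as in Lemma \ref{lem:4} via Corollary \ref{cor:2} and the uniform bound on $\epsilon_{k+1}$), and bound the $p_k$- and $q_k$-error terms exactly as in Lemma \ref{lem:5} using \eqref{eq: f_n^k bound}, \eqref{eq:5}, Lemma \ref{lem:7}, \eqref{eq:7} and \eqref{eq:8}. The bookkeeping you flag regarding the conjugations and the unimodular factor $r_{\ft}$ is precisely what the paper handles, and the final estimate $\left(1-\tfrac{\epsilon^2}{2}\right)\ct e^{\nu\Re(z)}$ matches.
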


\begin{proof}
By the Chain Rule and using the notation $p_k,q_k$ introduced above, we have
\begin{align*}
(\widetilde{\psi}_{k+1})_{\zbar}(z) &= (\dt^{-1} \circ \widetilde{\psi}_k \circ \ft )_{\zbar}(z) \\
&= (\dt^{-1})_z(\widetilde{\psi}_k(\ft(z)) (\widetilde{\psi}_k \circ \ft )_{\zbar}(z) + (\dt^{-1})_{\zbar}(\widetilde{\psi}_k(\ft(z)) \overline{ (\widetilde{\psi}_k \circ \ft)_z(z) }\\
&= \frac{ (1+p_k(\ft(z)) )\ft_{\zbar}(z) + q_k(\ft(z)) \overline{\ft_z(z)} }{ \dt_z( \widetilde{\psi}_{k+1}(z)  ) (1- |\mu_{\dt} (\widetilde{\psi}_{k+1}(z) ) |^2 )}
- \frac{ \mu_{\dt} ( \widetilde{\psi}_{k+1}(z)) [ (1 + \overline{ p_k(\ft(z)) } ) \overline{\ft_z(z) } + \overline{q_k(\ft(z))} f_{\zbar}(z) ] }{  \overline{ \dt_z( \widetilde{\psi}_{k+1}(z) ) }(1- |\mu_{\dt} (\widetilde{\psi}_{k+1}(z)) |^2 ) }\\
&= \frac{1}{1-|\mu_{\dt}(\widetilde{\psi}_{k+1}(z)) |^2} \left ( \frac{ \mu_{\ft}(z) f_z(z) }{ \dt_z ( \widetilde{\psi}_{k+1}(z) ) } - \frac{ \mu_{\dt} (\widetilde{\psi}_{k+1}(z) ) \overline{f_z(z)}}{ \overline{ \dt_z (\widetilde{\psi}_{k+1}(z) ) } } \right ) \\
&+ \frac{1}{1-|\mu_{\dt}(\widetilde{\psi}_{k+1}(z)) |^2} \left ( \frac{ p_k(\ft(z)) \mu_{\ft}(z) f_z(z) }{\dt_z ( \widetilde{\psi}_{k+1}(z) )} - \frac{ \overline{p_k(\ft(z)) } \mu_{\dt} (\widetilde{\psi}_{k+1 }(z)) \overline{f_z(z)} }{  \overline{ \dt_z (\widetilde{\psi}_{k+1}(z) ) } }   \right ) \\
&+ \frac{1}{1-|\mu_{\dt}(\widetilde{\psi}_{k+1}(z)) |^2} \left ( \frac{ q_k(\ft(z)) r_{\ft}(z) f_z(z) }{\dt_z ( \widetilde{\psi}_{k+1}(z) )} - \frac{ \overline{q_k(\ft(z)) } \mu_{\dt} (\widetilde{\psi}_{k+1 }(z)) \mu_{\ft}(z) r_{\ft}(z) \overline{\ft_z(z)} }{  \overline{ \dt_z (\widetilde{\psi}_{k+1}(z) ) } }   \right )
\end{align*}
Following the same idea as the proof of Lemma \ref{lem:5}, the first term may be bounded by $C_9e^{\nu \Re(z)}$ just as the analogous term was in Lemma \ref{lem:4}. The other two terms may be bounded as in Lemma \ref{lem:5} from which we again obtain
\begin{align*}
|(\widetilde{\psi}_{k+1})_{\zbar}(z)| &\leq  \left ( C_9 + \frac{ \ct \lambda^{\nu} (1+C_8 e^{\nu \Re(z)} ) ( 1+ |\mu_{\ft}( z)| )(1+|\mu_{\dt}( \widetilde{\psi}_{k+1}(z) )| )  }{1-|\mu_{\dt}( \widetilde{\psi}_{k+1}(z) )|^2} \right) e^{\nu \Re(z)} 
\\
&\leq \ct e^{\nu \Re(z)},
\end{align*}
using Lemma \ref{lem:7}, \eqref{eq:7} and \eqref{eq:8}.
\end{proof}

For $k\in \N$, define $\mu_k$ to be the complex dilatation of $\widetilde{\psi}_k$.

\begin{corollary}
\label{cor:1a}
There exists a constant $C_{11}$ such that for all $k\in \N$ and $\Re(z) < \log R$ we have
\[ |\mu_k(z)| \leq C_{11}e^{\nu\Re(z)} \]
\end{corollary}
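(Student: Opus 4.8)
The plan is to combine the base case from Corollary~\ref{cor:1} with the inductive step just established. Recall that $\widetilde{\psi}_1$ satisfies the bounds of Lemma~\ref{lem:3} and Lemma~\ref{lem:4}, namely $|(\widetilde{\psi}_1)_z(z)-1| \leq C_7 e^{\nu \Re(z)}$ and $|(\widetilde{\psi}_1)_{\zbar}(z)| \leq C_9 e^{\nu \Re(z)}$. By the choice of $\ct$ in \eqref{eq:8}, since $\ct > 2\max\{C_7,C_9\}/\epsilon > \max\{C_7,C_9\}$, the estimate \eqref{eq:5} holds for $k=1$. Lemma~\ref{lem:5} and Lemma~\ref{lem:6} then show that \eqref{eq:5} propagates from $k$ to $k+1$, so by induction we have
\[ \max\{ |(\widetilde{\psi}_k)_z(z)-1|, |(\widetilde{\psi}_k)_{\zbar}(z)| \} \leq \ct e^{\nu \Re(z)} \]
for all $k\in\N$ and all $\Re(z) < \log R$, where $R<1$ is chosen as in Lemma~\ref{lem:7}.

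Next I would convert this into the desired bound on the complex dilatation. Since $\Re(z) < \log R < 0$, we have $e^{\nu \Re(z)} < 1$, but more is needed: I want to ensure the denominator $|(\widetilde{\psi}_k)_z(z)|$ is bounded away from zero uniformly in $k$. Shrinking $R$ further if necessary so that $\ct e^{\nu \log R} \leq \tfrac12$, we get $|(\widetilde{\psi}_k)_z(z)| \geq 1 - \ct e^{\nu \Re(z)} \geq \tfrac12$ for all $\Re(z) < \log R$. Therefore
\[ |\mu_k(z)| = \frac{|(\widetilde{\psi}_k)_{\zbar}(z)|}{|(\widetilde{\psi}_k)_z(z)|} \leq \frac{\ct e^{\nu \Re(z)}}{1 - \ct e^{\nu \Re(z)}} \leq 2\ct \, e^{\nu \Re(z)}, \]
so the corollary follows with $C_{11} = 2\ct$. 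This mirrors exactly the argument already used in Corollary~\ref{cor:1} for the base case, just now applied uniformly in $k$.

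The main point requiring care — and the only genuine obstacle — is verifying that the constant $R$ from Lemma~\ref{lem:7} can be taken small enough to simultaneously serve all the purposes above (the Lemma~\ref{lem:7} inequality, the denominator lower bound, and the earlier convergence statement), and that none of the constants $C_7, C_8, C_9, \ct$ depend on $k$. This uniformity is already baked into the statements of Lemma~\ref{lem:5} and Lemma~\ref{lem:6}, whose proofs produce bounds with the \emph{same} $\ct$, so the induction genuinely closes; the $\epsilon^2/2$ margin in Lemma~\ref{lem:5} is what makes this work. Thus the corollary is essentially a packaging statement: the hard analytic work is in Lemma~\ref{lem:5} and Lemma~\ref{lem:6}, and here one simply records the resulting uniform-in-$k$ dilatation bound, which will feed into the proof that the limit map $\widetilde{\psi} = \lim_k \widetilde{\psi}_k$ is quasiconformal and asymptotically conformal.
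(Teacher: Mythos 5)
Your proof is correct and follows essentially the same route as the paper: the paper's own argument is exactly the final dilatation computation $|\mu_k| = |(\widetilde{\psi}_k)_{\zbar}|/|(\widetilde{\psi}_k)_z| \leq \ct e^{\nu\Re(z)}/(1-\ct e^{\nu\Re(z)})$, citing Lemma \ref{lem:5} and Lemma \ref{lem:6} for the uniform-in-$k$ bounds. You are merely more explicit than the paper about closing the induction (base case from Lemma \ref{lem:3} and Lemma \ref{lem:4} together with the choice of $\ct$ in \eqref{eq:8}) and about shrinking $R$ so the denominator stays above $\tfrac12$, both of which the paper leaves implicit.
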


\begin{proof}
By Lemma \ref{lem:5} and Lemma \ref{lem:6}, we have
\begin{align*}
| \mu_k(z)| &= \frac{ |(\widetilde{\psi}_k)_{\zbar}(z) |}{|(\widetilde{\psi}_k)_z(z) | } \\
&\leq  \frac{ \ct e^{\nu\Re(z)}  }{ 1 - \ct e^{\nu\Re(z)} } \\
&\leq C_{11}e^{\nu\Re(z)} 
\end{align*}
as required.
\end{proof}

\subsection{Completing the proof}

\begin{proof}[Proof of Theorem \ref{thm:1}]

Recall that we are initially assuming that $0$ is a geometrically attracting fixed point of $f$ with parameter $\lambda$ satisfying \eqref{eq:thm1eq4}.

From Proposition \ref{prop: uniform bound}, the sequence $\widetilde{\psi}_k$ converges uniformly for $\Re(z) < \log R$ to some continuous map $\widetilde{\psi}$. Moreover, as $\widetilde{\psi}_k(z) = z + \widetilde{E_k}(z)$, the uniform bound for $|\widetilde{E_k}|$ given by \eqref{eq:etk} implies that $\widetilde{\psi}_k$ omits a half-plane of the form $\Re(z) > S $ for all $k$.

Combining this observation with the uniform bound on $\mu_k$ from Corollary \ref{cor:1a} means we can apply Montel's Theorem, Theorem \ref{thm:montel}, to conclude that the limit function $\widetilde{\psi}$ is quasiconformal and also satisfies
\[ |\mu_{\widetilde{\psi}}(z) | \leq C_{11} e^{\nu \Re(z)} .\]
As each $\widetilde{\psi}_k$ satisfies $\widetilde{\psi}_k(z + 2\pi i ) = \widetilde{\psi}_k(z) + 2\pi i $, the same is true of $\widetilde{\psi}$. Hence there is an asymptotically conformal quasiconformal map $\psi$ whose logarithmic transform is $\widetilde{\psi}$.
Now, as
\begin{align*}
 \dt \circ \widetilde{\psi}_k &= \dt^{-k+1} \circ \ft ^k \\
&= \dt^{-k+1} \circ \ft^{ k-1} \circ \ft\\
&= \widetilde{\psi}_{k-1} \circ \ft,
\end{align*}
when we let $k\to \infty$ we conclude that
\[ \dt \circ \widetilde{\psi} = \widetilde{\psi} \circ \ft .\]
Undoing the logarithmic transform here shows that
\[ \mathcal{D} \circ \psi = \psi \circ f \]
as required.

If $0$ is a superattracting fixed point of $f$, then there exists $R'>0$ such that $|f(z)| < \lambda |z|$ for $|z|<R'$ where $\lambda$ satisfies \eqref{eq:thm1eq4}. We may then apply the argument above to obtain the conclusion.
\end{proof}

\section{Repelling fixed points}
\label{sec:rep}

In this section, we show that if the hypotheses of Theorem \ref{thm:2} hold, then the inverses of the logarithmic transforms $\ft^{-1}$ and $\dt^{-1}$ satisfy the hypotheses of Theorem \ref{thm:1}, from which the required conjugacy follows.

\subsection{Condition (a)} First observe that $|f(z)| > \lambda |z|$ implies that
$\Re \ft(z) > \log \lambda + \Re z$ and hence
\begin{equation}
\label{eq:rep1}
\Re \ft^{-1}(z) < \Re(z) + \log (1/\lambda ).
\end{equation}
If
\[ |\ft(z) - \dt(z) | < S_1 e^{\alpha \Re(z)}\]
for $\Re(z) < \log R$ then by \eqref{eq:rep1} we have
\begin{align*}
| \ft^{-1}(z) - \dt^{-1}(z) | &= |\dt^{-1}\circ \dt \circ \ft^{-1}(z) - \dt^{-1}(z) |\\
&\leq L| \dt(\ft^{-1}(z)) - z | \\
&= L|\dt(\ft^{-1}(z)) - \ft(\ft^{-1}(z)) | \\
&\leq LS_1 e^{\alpha \Re \ft^{-1}(z) } \\
& \leq LS_1e^{\alpha( \Re(z) + \log(1/\lambda) ) }\\
&\leq LS_1(1/\lambda)^{\alpha} e^{\alpha \Re(z)}
\end{align*}
This is \eqref{eq:thm1eq1} of Theorem \ref{thm:1} with $T_1 = LS_1(1/\lambda)^{\alpha}$.

\subsection{Condition(b)} Suppose that 
\begin{equation}
\label{eq:rep2} 
|\dt_z (u) - \dt_z(v) | \leq S_2 |u-v|^{\beta}
\end{equation}
for $\Re(u),\Re(v) < \log R$. 

\begin{lemma}
\label{lem:rep1}
With the hypotheses above, we have
\[ |\mu_{\dt}(u) - \mu_{\dt}(v)| \leq 2L^3S_2|u-v|^{\beta}.\]
\end{lemma}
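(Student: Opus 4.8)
The plan is to mimic the proof of Lemma \ref{lem:2}, trading the sharper constant there for a slightly cruder one: use the $L$-bi-Lipschitz property of $\dt$ to control denominators from below and the two complex derivatives from above, and use the H\"older bounds on $\dt_z$ and $\dt_{\zbar}$ to control the numerator. First I would put $\mu_{\dt}(u) - \mu_{\dt}(v)$ over the common denominator $\dt_z(u)\dt_z(v)$, obtaining
\[ \mu_{\dt}(u) - \mu_{\dt}(v) = \frac{ \dt_{\zbar}(u)\dt_z(v) - \dt_{\zbar}(v)\dt_z(u) }{ \dt_z(u)\,\dt_z(v) }. \]
Since $\dt$ is $L$-bi-Lipschitz, Lemma \ref{lem:0} (equivalently $|\dt_z| - |\dt_{\zbar}| \geq 1/L$ and $|\dt_z| + |\dt_{\zbar}| \leq L$) gives $1/L \leq |\dt_z(\cdot)| \leq L$ and $|\dt_{\zbar}(\cdot)| \leq L$, so the modulus of the denominator is at least $1/L^2$.

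For the numerator I would insert the cross term $\dt_{\zbar}(u)\dt_z(u)$ and apply the triangle inequality:
\[ |\dt_{\zbar}(u)\dt_z(v) - \dt_{\zbar}(v)\dt_z(u)| \leq |\dt_{\zbar}(u)|\,|\dt_z(v) - \dt_z(u)| + |\dt_z(u)|\,|\dt_{\zbar}(u) - \dt_{\zbar}(v)|. \]
Now bound $|\dt_{\zbar}(u)| \leq L$ and $|\dt_z(u)| \leq L$ as above, and apply the H\"older estimates to the two differences: \eqref{eq:rep2} controls $|\dt_z(v) - \dt_z(u)|$, and the companion bound $|\dt_{\zbar}(u) - \dt_{\zbar}(v)| \leq S_2|u-v|^{\beta}$ from \eqref{eq:thm2eq2} controls the other. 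This gives a numerator bound of $2LS_2|u-v|^{\beta}$, and combining with the lower bound $1/L^2$ on the denominator yields $|\mu_{\dt}(u) - \mu_{\dt}(v)| \leq 2L^3 S_2|u-v|^{\beta}$, which is the claim.

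There is no genuine obstacle here; the estimate is routine. The only point worth flagging is that the displayed standing assumption \eqref{eq:rep2} records only the H\"older bound on $\dt_z$, whereas the argument also uses the companion bound on $\dt_{\zbar}$, which is part of the hypotheses of Theorem \ref{thm:2} via \eqref{eq:thm2eq2}. (As in Lemma \ref{lem:2} one could instead use $||\mu_{\dt}||_{\infty} < 1$ to sharpen the constant to $2LS_2$, but the weaker bound stated is all that is needed downstream, where Lemma \ref{lem:rep1} feeds, through Lemma \ref{lem:00}, into verifying condition (b) of Theorem \ref{thm:1} for $\dt^{-1}$.)
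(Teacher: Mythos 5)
Your argument is correct and follows the paper's proof essentially verbatim: same common denominator, same inserted cross term $\dt_{\zbar}(u)\dt_z(u)$, same use of Lemma \ref{lem:0} to bound the denominator below by $1/L^2$ and the derivatives above by $L$. Your side remark that \eqref{eq:rep2} as displayed only records the $\dt_z$-bound while the $\dt_{\zbar}$-bound from \eqref{eq:thm2eq2} is also used is accurate (the paper invokes it explicitly a bit later in the same subsection), but this is a cosmetic point and does not affect correctness.
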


\begin{proof}
Using the fact that $\dt$ is $L$-bi-Lipschitz with Lemma \ref{lem:0}, the triangle inequality and \eqref{eq:rep2}, we have
\begin{align*}
|\mu_{\dt}(u) - \mu_{\dt}(v) | &= \left | \frac{ \dt_{\zbar}(u)}{\dt_z(u)} - \frac{ \dt_{\zbar}(v) }{\dt_z(v)} \right | \\
&= \frac{ | \dt_{\zbar}(u) \dt_z(v) - \dt_{\zbar}(v)\dt_z(u) | }{ | \dt_{z}(u)| \cdot |\dt_z(v)| } \\
&\leq L^2 \left ( | \dt_{\zbar}(u) \dt_z(v) - \dt_{\zbar}(u)\dt_z(u) | + | \dt_{\zbar}(u) \dt_z(u) - \dt_{\zbar}(v)\dt_z(u) | \right )\\
&\leq 2L^3S_2|u-v|^{\beta},
\end{align*}
as required.
\end{proof}

Recall that the Jacobian is given by $J_f(u) = |f_z(u)|^2 - |f_{\zbar}(u)|^2$.

\begin{lemma}
\label{lem:rep2}
With the hypotheses above, we have
\[ |J_{\dt}(u) - J_{\dt}(v)| \leq 4LS_2|u-v|^{\beta}.\]
\end{lemma}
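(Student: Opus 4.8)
The goal is to control the oscillation of the Jacobian $J_{\dt}(u) = |\dt_z(u)|^2 - |\dt_{\zbar}(u)|^2$ using the Hölder continuity of the complex derivatives \eqref{eq:rep2} (and its $\zbar$-counterpart), together with the fact that $\dt$ is $L$-bi-Lipschitz. The natural first move is to write the difference as a difference of two squared moduli and telescope:
\[
J_{\dt}(u) - J_{\dt}(v) = \left( |\dt_z(u)|^2 - |\dt_z(v)|^2 \right) - \left( |\dt_{\zbar}(u)|^2 - |\dt_{\zbar}(v)|^2 \right).
\]
For each bracket I would use the elementary identity $|a|^2 - |b|^2 = \Re\big( (a-b)\overline{(a+b)} \big)$, so that
\[
\big| |a|^2 - |b|^2 \big| \leq |a-b|\,(|a|+|b|).
\]

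\textbf{Key steps.} First, apply this to $a = \dt_z(u)$, $b = \dt_z(v)$: the factor $|a - b|$ is bounded by $S_2|u-v|^{\beta}$ via \eqref{eq:rep2}, and the factor $|a| + |b|$ is bounded by $2L$ via Lemma \ref{lem:0}. This gives a bound $2LS_2|u-v|^{\beta}$ for the $z$-bracket. Second, do the same for $a = \dt_{\zbar}(u)$, $b = \dt_{\zbar}(v)$: here $|a - b| \leq S_2|u-v|^{\beta}$ by the second inequality in \eqref{eq:rep2}, and since $|\dt_{\zbar}| \leq |\dt_z| \leq L$ (the map is $L$-bi-Lipschitz, so $|\dt_z| + |\dt_{\zbar}| \leq L$ forces $|\dt_{\zbar}| \leq L$), we again get $|a| + |b| \leq 2L$, yielding a bound $2LS_2|u-v|^{\beta}$ for the $\zbar$-bracket. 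Third, combine the two by the triangle inequality to obtain
\[
|J_{\dt}(u) - J_{\dt}(v)| \leq 2LS_2|u-v|^{\beta} + 2LS_2|u-v|^{\beta} = 4LS_2|u-v|^{\beta},
\]
which is exactly the claimed estimate.

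\textbf{Main obstacle.} Honestly there is no real obstacle here; this is a routine estimate of the same flavour as Lemma \ref{lem:2} and Lemma \ref{lem:rep1}. The only point requiring a moment's care is the bound $|\dt_{\zbar}(u)| + |\dt_{\zbar}(v)| \leq 2L$ — one must remember that $L$-bi-Lipschitz in the sense used here (via \eqref{eq:pre0}-style control and Lemma \ref{lem:0}) gives $|\dt_z| + |\dt_{\zbar}| \leq L$, hence each of $|\dt_z|, |\dt_{\zbar}|$ is at most $L$. Everything else is the identity $\big||a|^2-|b|^2\big| \le |a-b|(|a|+|b|)$ applied twice and added. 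I would therefore write the proof as a single short displayed chain of inequalities, mirroring the style of Lemma \ref{lem:rep1}.
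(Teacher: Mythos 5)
Your proof is correct and takes essentially the same approach as the paper: both decompose $J_{\dt}(u)-J_{\dt}(v)$ into the $z$-derivative and $\zbar$-derivative pieces, factor each difference of squared moduli, bound the sum factor by $2L$ via bi-Lipschitz control and the difference factor by $S_2|u-v|^\beta$ via \eqref{eq:rep2}, and add. Your use of the complex identity $|a|^2-|b|^2=\Re\bigl((a-b)\overline{(a+b)}\bigr)$ is a marginally cleaner way to obtain $\bigl||a|^2-|b|^2\bigr|\le|a-b|(|a|+|b|)$ than the paper's modulus-only factorization, but the underlying estimate is the same.
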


\begin{proof}
Using Lemma \ref{lem:0} with the inequality $|\dt_{\zbar}| \leq |\dt_z|$, the triangle inequality and \eqref{eq:rep2}, we have
\begin{align*}
|J_{\dt}(u) - J_{\dt}(v) | &= \left | |\dt_z(u)|^2 - |\dt_{\zbar}(u)|^2 - |\dt_z(v)|^2 + |\dt_{\zbar}(v)|^2 \right | \\
&\leq \left | ( |\dt_z(u)| + |\dt_{\zbar}(u)| )( |\dt_z(u)| - |\dt_z(u) | ) \right | + \left | ( |\dt_z(v)| + |\dt_{\zbar}(v)| )( |\dt_z(v)| - |\dt_z(v) | ) \right | \\
&\leq 4LS_2|u-v|^{\beta},
\end{align*}
as required.
\end{proof}

Now, using the formula for the complex dilatation of an inverse, Lemma \ref{lem:rep1}, Lemma \ref{lem:rep2}, and the inequality
\begin{equation}
\label{eq:rep2a} 
J_{\dt}(u) = |\dt_z(u) | \cdot ( 1 - |\mu_{\dt}(u)|^2) \geq \frac{ 1- ||\mu_{\dt}||_{\infty}^2 }{L^2},
\end{equation}
we have
\begin{align*}
| (\dt^{-1})_z(\dt (u) ) &- (\dt^{-1})_z(\dt(v)) | = \left | \frac{ \overline{ \dt_z(u) }}{J_{\dt}(u) } - \frac{ \overline{ \dt_z(v) }}{J_{\dt}(v) } \right | \\
&= \frac{ |\overline{ \dt_z(u) } J_{\dt}(v) - \overline{\dt_z(v)} J_{\dt}(u) | }{J_{\dt}(u) J_{\dt}(v) } \\
& \leq \left ( \frac{L^2}{1-||\mu_{\dt}||_{\infty}^2 } \right )^2 \cdot \left ( |\overline{ \dt_z(u) } J_{\dt}(v)  - \overline{ \dt_z(u) } J_{\dt}(u) | 
+ |\overline{ \dt_z(u) } J_{\dt}(u)  - \overline{ \dt_z(v) } J_{\dt}(u) | \right )\\
&\leq  \left ( \frac{L^2}{1-||\mu_{\dt}||_{\infty}^2 } \right )^2 \cdot \left ( L | J_{\dt}(u) - J_{\dt}(v) | + L^2(1-||\mu_{\dt}||_{\infty}^2 )| \dt_z(u) - \dt_z(v) | \right ) \\
&\leq S_2' |u-v|^{\beta},
\end{align*}
where $S_2'$ depends on $S_2, L$ and $||\mu_{\dt}||_{\infty}$.
Finally, we have
\[ | (\dt^{-1})_z(u) - (\dt^{-1})_z(v)| \leq S_2' |\dt(u) - \dt(v) |^{\beta} \leq S_2' L^{\beta} |u-v|^{\beta},\]
which is the first part of \eqref{eq:thm1eq2} with $T_2 = S_2' L^{\beta}$.
For the other part of \eqref{eq:thm1eq2}, we write
\[ | (\dt^{-1})_{\zbar}(\dt(u)) - (\dt^{-1})_{\zbar}(\dt(v))| = \left | -\frac{ \dt_{\zbar}(u)}{J_{\dt}(u) } + \frac{\dt_{\zbar}(v)}{J_{\dt}(v) } \right | \]
and proceed again using Lemma \ref{lem:rep1}, Lemma \ref{lem:rep2} and the assumption that $|\dt_{\zbar}(u) - \dt_{\zbar}(v)| \leq S_2 |u-v|^{\beta}$. We omit these computations as they are similar to those above.

\subsection{Condition (c)} Suppose that
\begin{equation}
\label{eq:rep3a} 
|\ft_z(u) - \dt_z(u) | \leq S_3 e^{\beta' \Re(u) },\quad |\ft_{\zbar}(u) - \dt_{\zbar}(u) | \leq S_3 e^{\beta ' \Re(u) }
\end{equation}
for $\Re (u) < \log R$. Given $\delta>0$, we may assume that $R$ is chosen small enough that $S_3R^{\beta '} < \delta$ and hence
\begin{equation}
\label{eq:rep3}
L-\delta \leq |\ft_z(u)| \leq L + \delta
\end{equation}
for $\Re(u) < \log R$. We will then also use the fact that
\begin{equation}
\label{eq:rep4}
J_{\ft}(u) \leq |\ft_z(u)|^2 < (L+ \delta)^2.
\end{equation}
Additionally, we have a lower bound for $J_{\ft}(u)$ by using \eqref{eq:rep3} to obtain
\begin{align}
\label{eq:rep5}
J_{\ft}(u) &= |\ft_z(u)|^2 - |\ft_{\zbar}(u)|^2\\
&\nonumber \geq (|\dt_z(u)| - \delta)^2 - (|\dt_{\zbar}(u)| + \delta)^2\\
&\nonumber \geq J_{\dt}(u) - 2L\delta.
\end{align}
For our final preparatory step, we have by the triangle inequality, \eqref{eq:rep3a} and \eqref{eq:rep3} that
\begin{align}
\label{eq:rep5a}
|J_{\dt}(u) - J_{\ft}(u)| &= \left | |\dt_z(u)|^2 - |\dt_{\zbar}(u)|^2 - |\ft_z(u)|^2 + |\ft_{\zbar}(u)|^2 \right | \\
&\nonumber \leq (|\dt_z(u)| + |\ft_z(u)| ) \cdot \left | | \dt_z(u) | - |\ft_z(u)| \right | + ( |\dt_{\zbar}(u)| + |\ft_{\zbar}(u)| ) \cdot \left |  |\dt_{\zbar}(u)| - |\ft_{\zbar}(u)| \right |\\
&\nonumber \leq 2(2L+\delta) S_3 e^{\beta' \Re(u)}. 
\end{align}

Now, writing $p = \ft^{-1}(u)$ and $q = \dt^{-1}(u)$, we have by the formula for the complex derivative of an inverse,
\begin{equation}
\label{eq:rep6}
| (\ft^{-1})_z (u) - (\dt^{-1})_z(u)| = \left | \frac{ \overline{ \ft_z(p) } }{J_{\ft} (p) } - \frac{ \overline{ \dt_z(q) } }{J_{\dt} ( q) } \right | 
= \frac{ | \overline{\ft_z(p)}J_{\dt}(q) - \overline{\dt_z(q)} J_{\ft}(p) | }{J_{\ft}(p) J_{\dt}(q) } .
\end{equation}
By \eqref{eq:rep2a} and \eqref{eq:rep5} we have
\[ J_{\ft}(p) J_{\dt}(q) \geq J_{\dt}(q)(J_{\dt}(p) - 2L\delta).\]
By choosing $\delta>0$ small enough, we may assume that
\begin{equation}
\label{eq:rep7}
J_{\ft}(p) J_{\dt}(q) \geq \frac{ 1 - ||\mu_{\dt}||_{\infty}^2 }{2L^2}.
\end{equation}
Next, a repeated application of the triangle inequality yields
\begin{align*}
| \overline{\ft_z(p)}J_{\dt}(q) - \overline{\dt_z(q)} J_{\ft}(p) | &\leq |\overline{\ft_z(p)} J_{\dt}(q) - \overline{\ft_z(p)} J_{\dt}(p) | +| \overline{\ft_z(p)} J_{\dt}(p) - \overline{\ft_z(p)}J_{\ft}(p) | \\
&\quad + |\overline{\ft_z(p)}J_{\ft}(p) - \overline{\dt_z(p)} J_{\ft}(p) | + | \overline{\dt_z(p)} J_{\ft}(p)  - \overline{\dt_z(q)} J_{\ft}(p) |\\
&= |\ft_z(p)| \cdot \left ( | J_{\dt}(q) - J_{\dt}(p)| + |J_{\dt}(p) - J_{\ft}(p) | \right ) \\ &\quad+ J_{\ft}(p) \cdot \left ( | \ft_z(p) - \dt_z(p)| + |\dt_z(p) - \dt_z(q)| \right).
\end{align*}
By \eqref{eq:rep2}, \eqref{eq:rep3a}, \eqref{eq:rep3}, \eqref{eq:rep4}, \eqref{eq:rep5a} and Lemma \ref{lem:rep2}, we have
\begin{align*}
| \overline{\ft_z(p)}J_{\dt}(q) - \overline{\dt_z(q)} J_{\ft}(p) | &\leq (L+\delta) \left ( 4LS_2 |p-q|^{\beta} + 2(2L+\delta) S_3 e^{\beta' \Re(p) } \right )\\
& \quad + (L+\delta)^2 \left ( S_3 e^{\beta' \Re(p) } + S_2 |p-q|^{\beta} \right ).
\end{align*}
To deal with the $|p-q|^{\beta}$ terms, by \eqref{eq:thm2eq2}, we have
\[ |p-q|^{\beta} = |\ft^{-1}(u) - \dt^{-1}(u)|^{\beta} \leq T_1^{\beta} e^{\alpha \beta \Re(u)}.\]
Putting this together with \eqref{eq:rep6} and \eqref{eq:rep7}, we conclude that there exists a constant $T_3$ depending only on the data from the hypotheses on $\ft$ and $\dt$ such that
\[ | (\ft^{-1})_z (u) - (\dt^{-1})_z(u)| \leq T_3 e^{\nu \Re(u)},\]
where $\nu = \min \{ \beta', \alpha \beta \}$ as before. This gives the first part of \eqref{eq:thm2eq3}. For the other part of \eqref{eq:thm2eq3}, we start with
\[ | (\ft^{-1})_{\zbar} (u) - (\dt^{-1})_{\zbar}(u) | = \left |- \frac{\ft_{\zbar}(p) }{J_{\ft}(p) } + \frac{ \dt_{\zbar}(q) }{J_{\dt}(q)} \right | \]
and proceed as above to conclude that
\[  | (\ft^{-1})_{\zbar} (u) - (\dt^{-1})_{\zbar}(u) | \leq T_3 e^{\nu \Re(u)}.\]
We again omit the computations as they are similar to the above.

\subsection{Completing the proof} 

\begin{proof}[Proof of Theorem \ref{thm:2}]
Assume first that $f$ has a geometrically repelling fixed point at $0$ with parameter $\lambda$ satisfying \eqref{eq:thm2eq4}.
As the hypotheses of Theorem \ref{thm:2} show that $\ft^{-1}$ and $\dt^{-1}$ satisfy the hypotheses of Theorem \ref{thm:1} with $1/\lambda$ satisfying \eqref{eq:thm1eq4},
there is an asymptotically conformal map $\widetilde{\psi}$ such that
\[ \widetilde{\psi} \circ \ft^{-1} = \dt^{-1} \circ \widetilde{\psi}.\]
As we can write this equation as $\dt \circ \widetilde{\psi} = \widetilde{\psi} \circ \ft$ and as $\widetilde{\psi}(z+2\pi i ) = \widetilde{\psi}(z) + 2\pi i$, we conclude that there is an asymptotically conformal map $\psi$ such that 
\[ D \circ \psi = \psi \circ f\]
as required.

If $0$ is a superrepelling fixed point of $f$, then we find a neighbourhood $|z|<R'$ of $0$ on which $|f(z)| > \lambda |z|$ holds for $\lambda$ satisfying \eqref{eq:thm2eq4} and apply the argument above.
\end{proof}

\section{Recovering the classical results}
\label{sec:old}

We first recover K\"onig's Theorem

\begin{proof}[Proof of Proposition \ref{prop:holo}]
As $f$ is holomorphic and fixes $0$, $f$ is simple at $0$ with the only generalized derivative being $z \mapsto e^{i\arg f'(0)} z$. Therefore the asymptotic representative is $\mathcal{D}(z) = \rho_f(|z|) e^{i\arg f'(0)} z/|z|$. Now,
\[ | f(B(0,r))| = \int_{B(0,r)} J_f = \int_0^{2\pi} \int_0^r |f'(te^{i\theta}) | ^2 t \: dt \: d\theta.\]
Writing $f(z) = a_1z +a_2z^2 + \ldots$, we have
\[ |f'(te^{i\theta}) |^2 = \left ( a_1 + 2a_2 te^{i\theta} + \ldots \right ) \cdot \left ( \overline{a_1} + 2\overline{a_2} te^{-i\theta} + \ldots \right) .\]
As $\int_0^{2\pi} e^{mi\theta} \: d\theta = 0 $ for $m\in \Z \setminus \{ 0 \}$, it follows that
\begin{align*}
| f(B(0,r))| &= 2\pi \int_0^r \left ( |a_1|^2t + 4|a_2|^2 t^3 + 9|a_3|^2 t^5 + \ldots \right ) \: dt \\
&= \pi \sum_{n=1}^{\infty} n |a_n|^2 r^{2n}.
\end{align*}
Hence
\[ \rho_f(r) = \left ( \sum_{n=1}^{\infty} n |a_n|^2 r^{2n} \right )^{1/2} =|a_1|r(1 + O(r^2))\]
as $r\to 0$. It then follows from Sternberg's Linearization Theorem, Theorem \ref{thm:ste}, that $\rho_f(r)$ may be smoothly conjugated in a one-sided neighbourhood of $0$ to the map $z\mapsto |a_1 |z$.

More precisely, suppose that $A(r) = |a_1|r$ and that $h$ is the conjugating map so that $h\circ \rho_f = A\circ h$ holds on $[0,r_0]$ for some $r_0 >0$. Define $H(re^{i\theta}) = h(r)e^{i\theta}$. Then, recalling that $a_1 = f'(0)$,
\begin{align*} H(\mathcal{D}(z)) &= H\left(  \rho_f(|z|) e^{i\arg f'(0)} z/|z| \right) \\
	&= h( \rho_f(|z|) ) e^{i\arg f'(0)} z/|z| \\
	&= A ( h(|z|)) e^{i\arg f'(0)} z/|z| \\
	&= |f'(0)| h(|z|) e^{i\arg f'(0)} z/|z|\\
	&= f'(0) H(z)
\end{align*}
from which we see that $H$ conjugates $\mathcal{D}$ to $z\mapsto f'(0)z$ in a neighbourhood of $0$. 
\end{proof}

Next, we recover B\"ottcher's Theorem.

\begin{proof}[Proof of Proposition \ref{prop:holo2}]
The only generalized derivative of $f$ at $0$ is $g(z) = e^{i\arg a_d} z^d$ and hence the asymptotic representative is $\mathcal{D}(z) = \rho_f(|z|) e^{i\arg a_d} z^d / |z|^d$.

Following similar computations as the proof of Proposition \ref{prop:holo}, we have
\[ \int_{B(0,r)} J_f =  \pi \sum_{n=d}^{\infty} n|a_n|^2 r^{2n} .\]
Now as $r\to 0$, $f$ gets closer and closer to a $d$-fold covering onto the image of $B(0,r)$. Thus
\[ |f(B(0,r)) | = \frac{\pi}{d} \left ( d |a_d|^2 r^{2d} \right) (1+O(r)) \]
as $r\to 0$. We conclude that
\[ \rho_f(r) = |a_d| r^d ( 1+O(r))\]
as $r\to 0$.
We may conjugate this by a linear map to 
\[ A_1(r) = r \mapsto r^d(1+O(r))\]
as $r\to 0$.
The logarithmic transform of this map is
\begin{align*} 
\widetilde{A_1}(r) &= \log \left ( e^{dr}(1+O(e^{r} )) \right ) \\
&= dr + \log ( 1+ O(e^{r}) ) \\
&= dr + O(e^{r})
\end{align*}
as $r\to -\infty$.
Conjugating this by $r \mapsto -1/r$, we obtain
\[ \widetilde{A_2}(r) = -\frac{ 1}{-d/r + O(e^{-1/r}) } = \frac{r}{d} + O(e^{-1/r}) \]
as $r\to 0^+$. Applying Sternberg's Linearization Theorem, Theorem \ref{thm:ste}, we see that $\widetilde{A_2}(r)$ is conjugate to $\widetilde{A_3}(r) = r/d$. Undoing the logarithmic transform and the conjugation by $-1/r$, we conclude that $A_1$ is smoothly conjugate to $A(r) = r^d$. The rest of the proof now runs analogously to that of Proposition \ref{prop:holo}.

Suppose that $h\circ \rho_f = A \circ h$ holds on $[0,r_0]$ for some $r_0>0$. Define $H(re^{i\theta}) = h(r) e^{i\theta}$. Then
\begin{align*}
H(\mathcal{D}(z)) &= H \left ( \rho_f(|z|) e^{i\arg a_d} z^d / |z|^d \right )\\
&= h( \rho_f|z|) e^{i\arg a_d} z^d / |z|^d \\
&= A(h(|z|))e^{i\arg a_d} z^d / |z|^d\\
&= e^{i\arg a_d} H(z)^d.
\end{align*}
This shows that $H$ conjugates $\mathcal{D}$ to $e^{i\arg a_d} z^d$ and we may apply a final conjugation by a rotation to conclude that $\mathcal{D}$ is smoothly conjugate to $z^d$.
\end{proof}

\section{Uniqueness of the conjugacy}
\label{sec:conj}

As noted in the introduction, the uniqueness of the conjugacy in Theorem \ref{thm:1} and Theorem \ref{thm:2} boils down to finding which maps conjugate $\dt$ to itself.

As $\mathcal{D}(z) = \rho_f(|z|)g(z/|z|)$, where $g :S^1 \to \C$ is BLD and $\rho_f$ is the mean radius function, we have
\[ \dt(x+iy) = g_1(y) + \widetilde{\rho}_f(x) + ig_2(y),\]
c.f. \cite[Section 7]{FP} in dimension two. It's worth bearing in mind that if $\mathcal{D}(z) = z^d$ then $g_1(y) \equiv 0$, $g_2(y) = dy$ and $\widetilde{\rho}_f(x) = dx$.

Suppose then that we are looking to find solutions to $\dt \circ \eta = \eta \circ \dt$ and we work in the special case that $\eta(x+iy) = h(x) + iy$. Then 
\[ \dt(\eta(x+iy)) = g_1(y) + \widetilde{\rho}_f(h(x)) + ig_2(y)\]
and
\[ \eta ( \dt(x+iy)) = h( g_1(y) + \widetilde{\rho}_f(x) ) + ig_2(y).\]
We see that we are looking for functions which commute with $g_1(y) + \widetilde{\rho}_f(x)$, viewed as a function of $x$. In the continuous class, Lipinski's Theorem (see \cite[p.213]{Kuc}) states that such functions exist and depend only on an arbitrary continuous function. However, once such functions are required to be $C^1$ then the situation is much more restrictive and the only such functions are the regular iterates of the given function (see \cite[p.214]{Kuc}).

We leave open the general question of determining the quasiconformal conjugacies of $\dt$ to itself.


\begin{thebibliography}{widest-label}

\bibitem{B}
W. Bergweiler,
Iteration of quasiregular mappings,
{\it Comp. Meth. Funct. Th.}, {\bf 10} (2010), no. 2, 455-481.

\bibitem{BF}
B. Branner, N. Fagella,
{\it Quasiconformal surgery in holomorphic dynamics},
Cambridge Stud. Adv. Math., {\bf 141},
Cambridge University Press, 2014.

\bibitem{CG}
L. Carleson, T. Gamelin, 
{\it Complex dynamics}, 
Springer-Verlag (1993).

\bibitem{FF}
A. Fletcher, R. Fryer, On B\"ottcher coordinates and quasiregular maps, {\it Contemp. Math.}, 575 (2012), 53-76. 

\bibitem{FP}
A. Fletcher, J. Pratscher, On the mean radius of quasiconformal mappings,  to appear in {\it Israel J. Math.}

\bibitem{FW}
A. Fletcher, B. Wallis,
The orbits of generalized derivatives,
{\it Ann. Acad. Sci. Fenn.}, {\bf 44} (2019), 485-495. 

\bibitem{GMRV}
V. Gutlyanskii, O. Martio, V. Ryazanov, M. Vuorinen,
Infinitesimal Geometry of Quasiregular Mappings,
{\it Ann. Acad. Sci. Fenn.},
{\bf 25} (2000), 101-130.

\bibitem{Jia}
Y. Jiang,
Asymptotically conformal fixed points and holomorphic motions,
{\it Ann. Acad. Sci. Fenn.}, {\bf 34} (2009), 27-46.

\bibitem{Kuc}
M. Kuczma,
{\it Functional equations in a single variable}, 
Monografie Matematyczne, Tom 46 Państwowe Wydawnictwo Naukowe, Warsaw 1968, 383 pp.

\bibitem{Mil}
J. Milnor, 
{\it Dynamics in one complex variable}, 
Third edition, Annals of Mathematics Studies,
{\bf 160}, Princeton University Press, Princeton, NJ, 2006.

\bibitem{Min}
R. Miniowitz,
Normal Families of Quasimeromorphic Mappings,
{\it Proc. Amer. Math. Soc.}, {\bf 84}, no.1 (1982), 35-43.

\bibitem{OFR}
A. O'Farrell, M. Roginskaya,
Conjugacy of real diffeomorphisms. A survey,
{\it St. Petersburg Math. J.},
{\bf 22}, no.1 (2011), 1-40.

\bibitem{Six}
D. J. Sixsmith,
Dynamics in the Eremenko-Lyubich class,
{\it Conf. Geom. Dyn.}, {\bf 22} (2018), 185-224.

\bibitem{Ste}
S. Sternberg, Local $C^n$ transformations of the real line,
{\it Duke Math. J.}, {\bf 24} (1957), 97-102.

\end{thebibliography}
\end{document}